\documentclass[12pt,a4paper]{amsart} 
\usepackage[utf8]{inputenc}
\usepackage[english]{babel}

\usepackage{amsmath}
\usepackage{amsfonts}
\usepackage{amssymb}
\usepackage{color}

\usepackage{amsthm}

\newtheorem{theorem}{Theorem} [section]

\newtheorem{corollary}[theorem]{Corollary} 
\newtheorem{lemma}[theorem]{Lemma}

\newtheorem{open}{Open Question}

\usepackage{graphics}
\usepackage{epsfig}

\usepackage{url}
\usepackage{hyperref}

\usepackage[a4paper,top=2.5cm,bottom=2.8cm,
left=3.2cm,right=3.2cm]{geometry}
\markleft{\hfill \textsc{Natural Orderings of Triangle Centers} \hfill}
\newcommand{\degrees}{^\circ}

\long\def\void#1{}

\begin{document}
\void{
International Journal of  Computer Discovered Mathematics (IJCDM) \\
ISSN 2367-7775 \copyright IJCDM \\
Volume 11, 2026 pp. xx--yy  \\
web: \url{http://www.journal-1.eu/} \\
Received xx Mar. 2026. Published on-line xx Mmm 2026\\ 

\copyright The Author(s) This article is published 
with open access.\footnote{This article is distributed under the terms of the Creative Commons Attribution License which permits any use, distribution, and reproduction in any medium, provided the original author(s) and the source are credited.} \\
}

\bigskip
\bigskip

\begin{center}
	{\Large \textbf{Natural Orderings of Triangle Centers} }\\
	\medskip
	\bigskip
        \bigskip

	\textsc{Stanley Rabinowitz} \\

	545 Elm St Unit 1,  Milford, New Hampshire 03055, USA \\
	e-mail: \href{mailto:stan.rabinowitz@comcast.net}{stan.rabinowitz@comcast.net}
	\\web: \url{http://www.StanleyRabinowitz.com} \\

\bigskip
\bigskip

\end{center}
\bigskip

\textbf{Abstract.}
Triangle centers are usually studied individually or through special geometric relationships, but little attention has been given to global structure among them. In this paper we introduce several natural ways to order triangle centers, including the isosceles order, vertex order, side order, and trace order. These partial orders compare centers by their relative positions in families of triangles, such as acute triangles with a fixed shortest side. Using barycentric coordinates and symbolic computation, we determine ordering relations among many of the first 100 triangle centers listed in Kimberling's \emph{Encyclopedia of Triangle Centers}. The results reveal surprising structural patterns and suggest new ways to organize and study triangle centers. Many new inequalities are also revealed. For example, in an acute triangle $ABC$, with shortest side $BC$, the Gergonne point is always closer to side $BC$ than the nine-point center.

\bigskip
\textbf{Keywords.} triangle centers, partial order, computer-discovered mathematics.

\medskip
\textbf{Mathematics Subject Classification (2020).} 51M04, 51-08.

\newenvironment{code}[2]
{
\medskip
\hspace{#1}%
\begin{minipage}{#2}
\color{blue}
}
{
\color{black}
\smallskip
\end{minipage}%
}

\newcommand{\redtext}[1]
{\textcolor{red}{#1}
}

\bigskip
\bigskip
%
\section{Introduction}
\label{section:introduction}

Triangle centers form one of the richest collections of special points associated with a triangle. Thousands of such centers have been cataloged in Kimberling's \emph{Encyclopedia of Triangle Centers} \cite{ETC}, and many remarkable geometric relationships between them are known. Most studies, however, focus on individual centers or small families of centers.

In this paper we investigate a different question: can triangle centers be ordered in a natural way?

When a triangle varies in shape, the positions of its centers move in complicated ways, often appearing chaotic. Nevertheless, when we restrict attention to certain families of triangles, clear ordering relationships emerge. We introduce several ways to compare triangle centers based on their geometric positions, including comparisons by distance from a vertex, distance from a side, and the position of cevian traces.

Let $X_n$ denote the $n$-th triangle center cataloged in \cite{ETC}.

Figure~\ref{fig:10000ThinCenters} shows a random triangle. The black dots represent the triangle centers
$X_1$, $X_2$, $X_3,\ldots, X_{10000}$, except that only triangle centers of $\triangle ABC$ are
shown if they lie inside the triangle.

\begin{figure}[h!t]
\centering
\includegraphics[width=0.5\linewidth]{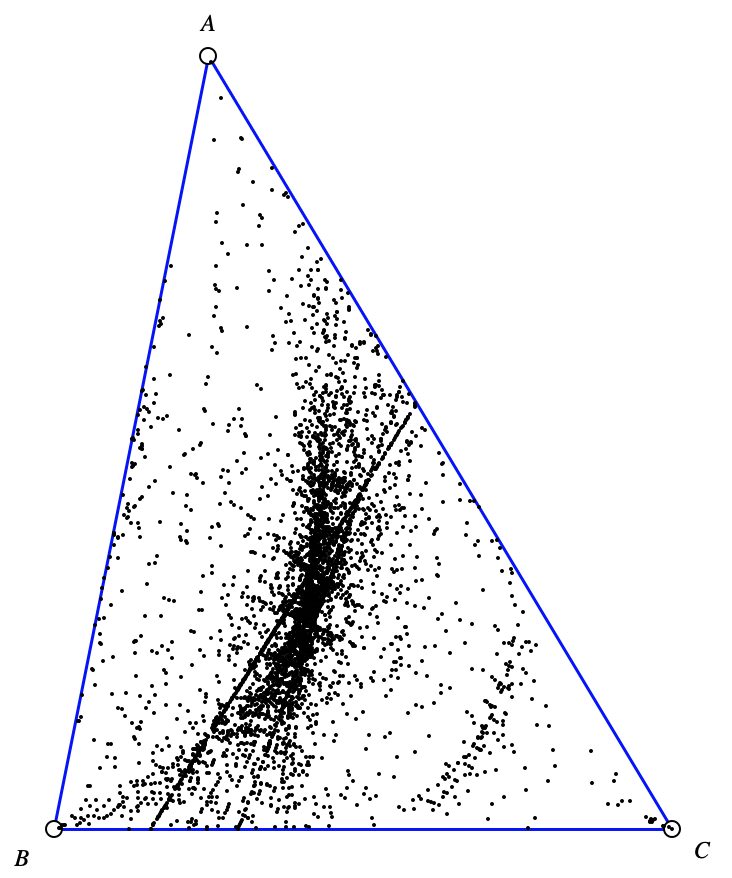}
\caption{Triangle Centers inside $\triangle ABC$}
\label{fig:10000ThinCenters}
\end{figure}

Some patterns can be noticed. For example, a number of dots lie along
straight lines. These correspond to known central lines~\cite{CentralLines} associated with a triangle,
such as the Euler Line, the Nagel Line, and the Brocard Axis.
Apart from this pattern, the dots appear to be randomly scattered throughout the interior of the triangle.
As we vary the shape of the triangle, the dots move around seemingly chaotically as they change their order
in relationship to each other. Sometimes the points move outside the triangle.

In this paper, we try to bring some order to this chaos.
We explore various ways for ordering these centers.

To study these orderings we use barycentric coordinates together with symbolic computation in Mathematica. Section 2 develops several preliminary results concerning barycentric coordinates and distances to the sides of a triangle. Sections 3--6 introduce four different orderings on triangle centers and investigate their properties. In each case we determine ordering relationships among many of the first 100 triangle centers listed in \cite{ETC}.

The reader who just wants to learn about ordering triangle centers
can skip Section~\ref{section:prelim} and proceed to Section~\ref{section:iso}.

%

\section{Preliminary Remarks}
\label{section:prelim}


We assume that the reader is familiar with barycentric coordinates.
We use the standard convention that in $\triangle ABC$, $a=BC$, $b=CA$, and $c=AB$.

The complete Mathematica proofs of all the theorems in this paper can be
found in the Mathematica notebook included in the supplementary material
associated with this paper.


If a point $P$ has barycentric coordinates $(p:q:r)$ and $p+q+r=1$,
then we say that the coordinates are \emph{normalized}. 
(Yiu \cite[\S 3.1.2]{Yiu} calls these absolute barycentric coordinates.)

Since barycentric coordinates use signed areas, if the normalized barycentric
coordinates for a point $P$ are $(p:q:r)$, the value of $p$ is
positive if $P$ lies on the same side of $BC$ as $A$, and negative if it lies on the opposite side.
The value of $q$ is
positive if $P$ lies on the same side of $CA$ as $B$, and negative if it lies on the opposite side.
The value of $r$ is
positive if $P$ lies on the same side of $AB$ as $C$, and negative if it lies on the opposite side.
This gives us the following lemma.

\begin{lemma}
\label{lemma:7areas}
The sidelines of a triangle divide the plane into seven regions.
If the normalized barycentric coordinates for a point $P$ are $(p:q:r)$,
then the signs of $p$, $q$, and $r$ are as shown in Figure~\ref{fig:7areas}.
\end{lemma}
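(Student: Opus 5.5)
The plan is to derive the lemma directly from the signed-area interpretation of normalized barycentric coordinates described just before the statement. For a point $P$ with normalized coordinates $(p:q:r)$ we have $p=[PBC]/[ABC]$, $q=[APC]/[ABC]$, $r=[ABP]/[ABC]$, with signed areas. Hence $p$ has the sign of the side of line $BC$ on which $P$ lies, with $A$'s side positive; the analogous statements hold for $q$ and line $CA$ and for $r$ and line $AB$. Each sideline splits the plane into two open half-planes, so a point off the three lines determines a sign vector $(\operatorname{sgn} p,\operatorname{sgn} q,\operatorname{sgn} r)\in\{+,-\}^3$.

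First I will show that exactly seven of the eight sign vectors occur, and that each one determines a connected region. The vector $(-,-,-)$ is impossible because $p+q+r=1>0$. For the other seven I will exhibit a witness point. The centroid $(1/3:1/3:1/3)$ gives $(+,+,+)$, which is the interior of the triangle. For the region across side $BC$ from $A$, I use the point $(-1:1:1)$, which gives $(-,+,+)$; the other two such regions follow by symmetry. For the region beyond vertex $A$, I use $(3:-1:-1)$, which gives $(+,-,-)$, and again the other two follow by symmetry.

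Each sign vector defines an intersection of three open half-planes. Such an intersection is convex, hence connected. Therefore the seven nonempty sign patterns are exactly the seven regions into which the sidelines cut the plane.

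Next I match these seven patterns to the picture in Figure~\ref{fig:7areas}.
\begin{itemize}
\item The bounded region, the interior of the triangle, has all three coordinates positive.
\item The three regions sharing an edge with the triangle have exactly one negative coordinate: $p<0$ for the region bordering $BC$, and similarly for the other two sides.
\item The three unbounded angular regions at the vertices, vertical to the triangle's angles, have exactly one positive coordinate: $p>0$ for the region at $A$.
\end{itemize}
To confirm the identification, I will check which sidelines separate each region from the interior.

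The only step requiring any care is justifying that there are exactly seven regions and that each sign pattern corresponds to precisely one of them. The argument above handles this: the $(-,-,-)$ pattern is excluded by normalization, and convexity of intersections of half-planes gives connectedness. Points lying on a sideline have a zero coordinate and form the region boundaries, so they can be noted separately.
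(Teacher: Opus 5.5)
Your proposal is correct and uses the same idea as the paper. The paper gives no separate proof: in the paragraph before the lemma it notes that normalized barycentric coordinates are ratios of signed areas, so the sign of each of $p$, $q$, $r$ records which side of the corresponding sideline $P$ is on, and it then reads the sign pattern off Figure~\ref{fig:7areas}. Your extra steps (ruling out $(-,-,-)$ via $p+q+r=1$, giving witness points for the other seven patterns, and using convexity of intersections of open half-planes to match patterns with regions) make rigorous the count of seven regions, which the paper takes from the picture.
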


\begin{figure}[h!t]
\centering
\includegraphics[width=0.6\linewidth]{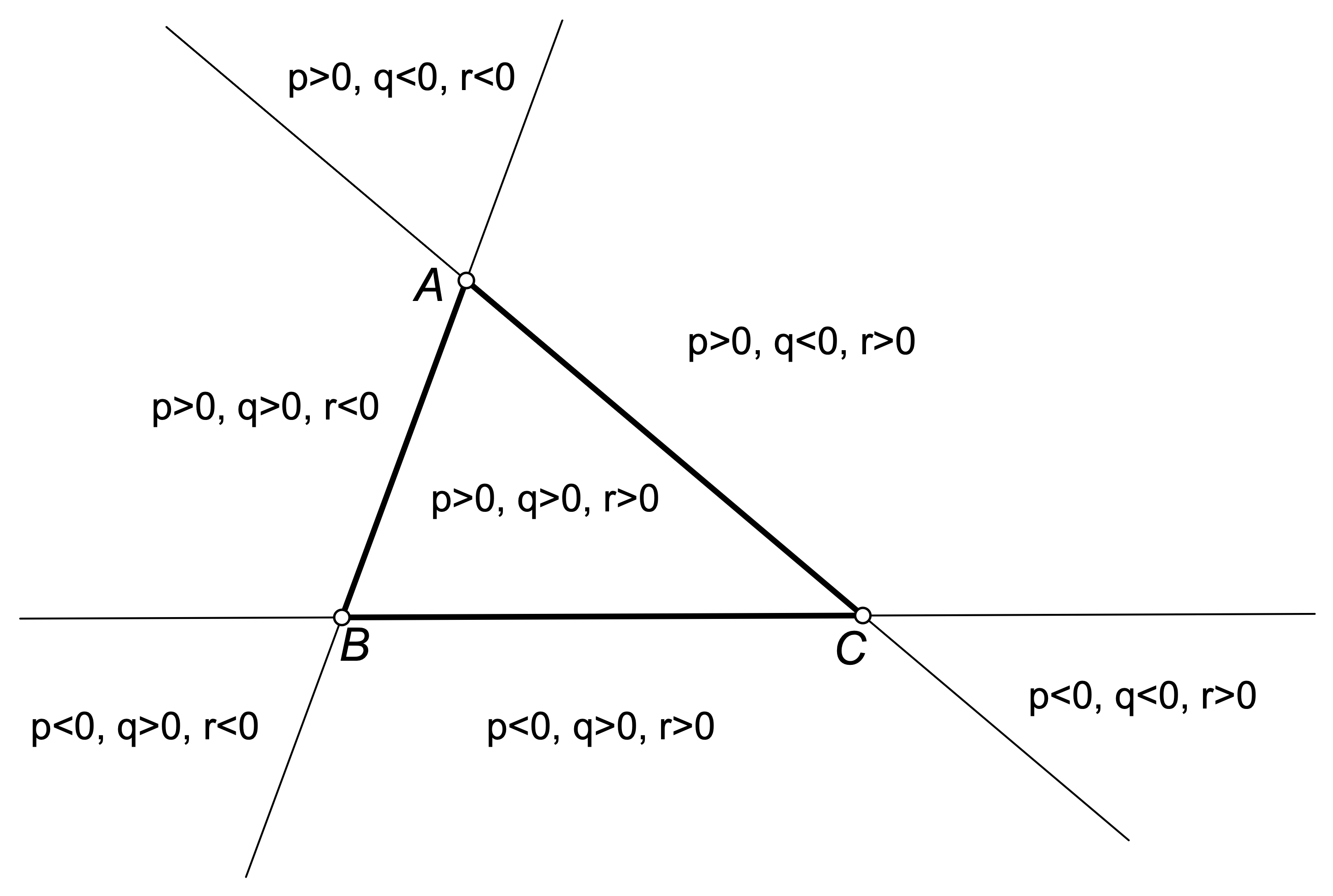}
\caption{Seven regions formed by the sidelines of a triangle}
\label{fig:7areas}
\end{figure}

In $\triangle ABC$, a point is said to be \emph{inside} angle $A$ if it
lies in the open convex region bounded by rays $\overrightarrow{AB}$ and $\overrightarrow{AC}$.
From Lemma~\ref{lemma:7areas}, we get the following result.

\begin{theorem}
Let $P$ have barycentric coordinates $(u:v:w)$ with respect to $\triangle ABC$.
Then $P$ is inside angle $A$ if and only if
$$v(u+v+w)>0\qquad\textrm{and}\qquad w(u+v+w)>0.$$
\end{theorem}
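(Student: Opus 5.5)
The plan is to reduce the statement to Lemma~\ref{lemma:7areas} by normalizing the coordinates. First, $P$ is a finite point, so $s=u+v+w\neq 0$, and the normalized barycentric coordinates of $P$ are
$$(p:q:r)=\left(\frac{u}{s}:\frac{v}{s}:\frac{w}{s}\right).$$
Since $s^2>0$, we have $q>0$ if and only if $vs>0$, and $r>0$ if and only if $ws>0$. Thus the stated condition is exactly $q>0$ and $r>0$, and it is also independent of the homogeneous scaling of $(u:v:w)$. The theorem is therefore equivalent to the claim that $P$ is inside angle $A$ exactly when its normalized coordinates satisfy $q>0$ and $r>0$.

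Next I will identify the interior of angle $A$ with an intersection of two open half-planes. The open convex region bounded by rays $\overrightarrow{AB}$ and $\overrightarrow{AC}$ is the set of points lying on the same side of line $AC$ as $B$ and on the same side of line $AB$ as $C$. By the sign discussion preceding Lemma~\ref{lemma:7areas}, the first condition is $q>0$ and the second is $r>0$, and the two directions of the equivalence follow at once. Equivalently, from Figure~\ref{fig:7areas}, the regions with $q>0$ and $r>0$ are the interior of the triangle, with signs $(+,+,+)$, and the region across $BC$ opposite $A$, with signs $(-,+,+)$. Their union, together with the open segment of $BC$ where $p=0$, is precisely the interior of angle $A$.

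The only step needing care is the description of the angle interior as the intersection of the two half-planes. I would justify it briefly by writing a point of the open angular region as $A+\lambda(B-A)+\mu(C-A)$ with $\lambda,\mu>0$. Its normalized barycentric coordinates are then $(1-\lambda-\mu:\lambda:\mu)$, which gives $q=\lambda>0$ and $r=\mu>0$ directly. Conversely, any point with $q>0$ and $r>0$ has this form. This parametrization gives the whole equivalence in one line and also handles the points on $BC$, so the region picture is not needed at all.
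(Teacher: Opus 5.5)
Your proof is correct and follows the paper's argument. You normalize using $u+v+w\neq 0$, reduce the condition to $q>0$ and $r>0$ by multiplying by $(u+v+w)^2$, and identify that condition with the interior of angle $A$; where the paper simply reads this last identification off Figure~\ref{fig:7areas}, your parametrization $A+\lambda(B-A)+\mu(C-A)$, with normalized coordinates $(1-\lambda-\mu:\lambda:\mu)$, gives a self-contained justification of that step.
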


\begin{proof}
Since barycentric coordinates are homogeneous, the coordinates $(u:v:w)$ and $(uk:vk:wk)$
represent the same point for any $k\neq 0$.
The normalized barycentric coordinates for $P$ are therefore
$$\left(\frac{u}{u+v+w}:\frac{v}{u+v+w}:\frac{w}{u+v+w}\right).$$
Note that $u+v+w\neq 0$ since $P$ is not on the line at infinity.
From Figure~\ref{fig:7areas}, we see that for normalized coefficients, $P=(p:q:r)$ is inside angle $A$
if and only if $q>0$ and $r>0$.
In terms of $u$, $v$, and $w$, this means that
$v/(u+v+w)>0$ and $w/(u+v+w)>0$.
Multiplying each inequality by the positive quantity $(u+v+w)^2$ preserves
the sense of the inequality and gives us the desired result.
\end{proof}

\begin{theorem}
\label{thm:insideIso}
Let $ABC$ be an isosceles triangle with base $BC$.
Then $X_n$ sometimes lies outside angle~$A$ when\\
$n\in \{4, 5, 11, 13, 14, 16, 17, 18, 19, 22, 23, 24, 25, 26, 27, 28, 29, 30,
33, 34, 36, 44,$\\
$46, 47, 48, 49, 50, 51, 52, 53, 54, 59, 62, 64, 66, 67,
68, 70, 73, 74, 77, 79, 80, 84, 87,$\\
$88, 90, 91, 92, 93, 94, 95, 96,
97, 98, 99, 100\}$.\\
The center $X_{30}$ lies on the line at infinity.\\
In all other cases, $X_n$ always lies inside angle $A$.
\end{theorem}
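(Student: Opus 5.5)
We reduce the statement to a finite family of one-variable sign questions and settle them by symbolic computation, using the preceding theorem as the criterion. Since $ABC$ is isosceles with base $BC$, set $b=c$ and, by homogeneity, normalize $a=t$ and $b=c=1$ with $0<t<2$ (the triangle inequality). For each $n\le 100$ we take the barycentric coordinates $(u:v:w)$ of $X_n$ from \cite{ETC}. These are given as $(f(a,b,c):f(b,c,a):f(c,a,b))$ for a center function $f$. Substituting $a=t$, $b=c=1$ turns $u,v,w$ into explicit functions of the single parameter $t$; they are rational, possibly involving radicals or trigonometric functions of the angles. By symmetry of the isosceles triangle, $v=w$ for every center, so the criterion of the previous theorem collapses to the single inequality
\[
S_n(t):=v(t)\bigl(u(t)+2v(t)\bigr)>0 .
\]

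First we handle the degenerate case. We compute $u+v+w$ and check that it is identically zero only for $n=30$. Indeed $X_{30}$ is the point at infinity of the Euler line, and its coordinates sum to $0$. This gives the separate clause of the theorem.

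For each remaining $n$, we decide whether $S_n(t)>0$ holds for all $t\in(0,2)$. For the centers in the exceptional list it suffices to exhibit a single witness $t_0$, found numerically and then verified exactly, with $S_n(t_0)\le 0$. For example, for $X_4$ any obtuse apex works: take $t$ close to $2$, where the orthocenter falls beyond $A$. Similar witnesses exist for $X_5, X_{11},\dots$. For the other centers we must prove positivity on the whole interval. Our approach is to clear positive denominators and factor $S_n$. We rewrite the trigonometric or radical quantities via the substitution $t=2\sin(\alpha/2)$, or by expressing the area as $\tfrac{t}{4}\sqrt{4-t^2}$ and then rationalizing with $t=4s/(1+s^2)$. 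This leaves a polynomial in one variable on an interval, whose positivity we certify by factoring and applying Sturm's theorem (or Mathematica's \texttt{Reduce}/\texttt{Resolve} over the reals).

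The main obstacle is the uniform treatment of the roughly forty "always inside" centers whose coordinates involve square roots. The Fermat, Napoleon and isodynamic-type points involve $\sqrt3$ and the area, and $X_{15},X_{16}$ split exactly here. Some centers are also defined piecewise or through cubic roots in ETC. For these, rationalizing can introduce spurious sign changes, so we would track the sign of each radical factor separately rather than squaring blindly. Where the coordinates of a center in ETC are only implicit, we would fall back on its geometric description. A typical example is $X_1,X_2,X_3$ lying on the axis of symmetry: there, insideness reduces to the single condition $u+2v$ and $v$ having the same sign, which is easy to verify directly.
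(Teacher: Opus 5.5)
Your plan is essentially the paper's proof. The paper passes the criterion $v(u+v+w)>0$, $w(u+v+w)>0$ of the preceding theorem, under $b=c$ and the triangle inequalities, to \texttt{FindInstance} for each center, and lists those $n$ for which a violating instance exists. Reducing to the single quantity $S_n(t)$ via $v=w$ and the normalization $b=c=1$, $a=t$ is a harmless refinement of that computation.

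There is one concrete error, and it hides a convention your write-up needs to state. You say a witness ``similar'' to the obtuse-apex one exists for $X_{11}$. It does not. With the ETC coordinates you get $(u:v:w)=(0:t(t-1)^2:t(t-1)^2)$, so $S_{11}(t)=2t^2(t-1)^4>0$ for every $t\neq 1$. Indeed $X_{11}$ is the midpoint of $BC$ in every non-equilateral isosceles triangle (Theorem~\ref{thm:X11}), and that point lies inside angle $A$.

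The only ``witness'' is the equilateral case $t_0=1$. There all three coordinates vanish, so $X_{11}$ is not a point at all. The same happens for $X_{80}$, the reflection of $X_1$ in $X_{11}$. For $t\neq1$ it is the reflection of the incenter in the midpoint of $BC$, which lies on the axis below $BC$ and hence inside angle $A$. Its coordinates degenerate only at $t=1$.

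So the stated list is reproduced only if you keep the raw coordinates, allow $t=1$, and count a vanishing triple as ``not inside.'' This is what \texttt{FindInstance} does implicitly when it is allowed to return $a=b=c$. Suppose instead you cancel common factors such as $(t-1)^2$ from $(u,v,w)$, which is projectively legitimate for $t\neq 1$. Or suppose you treat $t=1$ as outside the domain of a rational representation. Then your factor-and-certify step will place $X_{11}$ and $X_{80}$ among the always-inside centers, contradicting the statement.

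A smaller point of the same kind concerns your claim that $v=w$ for every center. This holds for bisymmetric center functions. ETC, however, lists for example $X_{99}$ and $X_{100}$ as $1/(b^2-c^2):\cdots$ and $a/(b-c):\cdots$. These are undefined at $b=c$, and after substitution they give $v=-w$. You must first multiply through by these vanishing, sign-changing denominators; ``clearing positive denominators'' is not enough here. Only after that do $X_{59}$, $X_{99}$, $X_{100}$ become $(1:0:0)=A$ with $S_n\equiv 0$, which is why they appear in the list.
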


\begin{proof}
We will use Mathematica. Let \texttt{Xn=\{p,q,r\}} where $(p:q:r)$ denotes the barycentric
coordinates for center $X_n$ as found from \cite{ETC}.

The following Mathematica code creates a function called \texttt{sometimeOutsideQ}
such that \texttt{sometimeOutsideQ[Xn]} returns \texttt{True} if and only if
center $X_n$ sometimes lies outside angle~$A$.

\begin{code}{0.4in}{5in}
\begin{verbatim}
triang = a>0 && b>0 && c>0 && a+b>c && b+c>a && c+a>b;
isosceles = (b == c);
constraint = triang && isosceles;
sometimeOutsideQ[{u_, v_, w_}] := Module[{inst},
   inst = FindInstance[(v(u+v+w) <= 0 || w(u+v+w) <= 0)
             && constraint, {a,b,c}];
   If[inst =!= {}, Return[True]];
   Return[False];
   ];
\end{verbatim}
\end{code}

We then use this function on each $X_n$ to determine those $n$ for which
$X_n$ can lie outside angle~$A$.
\end{proof}

\begin{theorem}
\label{thm:inside}
Let $ABC$ be an isosceles triangle with base $BC$ and $b>a$.
Then $X_n$ lies inside angle~$A$ for $1\leq n\leq 100$ except for $X_n$
when\\
$n\in \{18,  26,  30, 59, 68, 70, 87, 90, 91, 93, 96, 99, 100\}$.
\end{theorem}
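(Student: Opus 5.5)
The plan is to follow the same computational scheme as in the proof of Theorem~\ref{thm:insideIso}, with a stronger constraint. For each $n$ with $1\le n\le 100$ we take the barycentric coordinates $(u:v:w)$ of $X_n$ from \cite{ETC}. We then substitute $c=b$ and impose $b>a>0$. The triangle inequalities then reduce to $2b>a$, which is implied by $b>a$. So the constraint region is simply $0<a<b$. By homogeneity we may normalize $b=1$, leaving a single real parameter $a\in(0,1)$. By the theorem preceding Theorem~\ref{thm:insideIso}, $X_n$ lies inside angle $A$ if and only if
$$v(u+v+w)>0\qquad\textrm{and}\qquad w(u+v+w)>0.$$
Since the triangle is isosceles and $X_n$ is a center, $v=w$ after the substitution $c=b$. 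So only the single condition $v(u+v+w)>0$ needs to be checked. This is a one-variable rational (or, for some centers, algebraic) inequality in $a$ on $(0,1)$.

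For the thirteen exceptional indices we exhibit a witness. For each $n\in\{18,26,30,59,68,70,87,90,91,93,96,99,100\}$ we give a specific $a\in(0,1)$, found with \texttt{FindInstance}, at which $v(u+v+w)\le 0$.

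\begin{itemize}
\item For $X_{30}$ the witness is unnecessary: $u+v+w=0$, since the point is at infinity.
\item For points such as $X_{99}$ and $X_{100}$, which lie on the circumcircle, we expect them to lie on the arc below $BC$. This already places them outside the angle.
\end{itemize}

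For the remaining $n$ we must prove that $v(u+v+w)>0$ for all $a\in(0,1)$. Mathematica's \texttt{Reduce} or \texttt{Resolve} decides this exactly via cylindrical algebraic decomposition, because the inequality is polynomial after clearing positive denominators. In a written proof one can instead factor $v(u+v+w)$ at $b=c=1$ and check that each factor has constant sign on $(0,1)$. Typical factors are $a$, $2-a$, $2+a$, $1-a^2$, $4-a^2$ and $2-a^2$, together with squares.

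The main obstacle is the handful of centers whose coordinates involve radicals or trigonometric functions, for example $X_{13}$–$X_{17}$, which contain $\sqrt3$ and areas, or $X_{59}$ and $X_{88}$, which have coordinates like $1/(b-c)$ that degenerate when $b=c$. For the radical ones I would write $\Delta$, the area, and $\sin,\cos$ of the angles in terms of $a$ with $b=c=1$, for instance $\Delta=\tfrac a4\sqrt{4-a^2}$. Then I would introduce an auxiliary variable $s$ with $s^2=4-a^2$, $s>0$, so that the problem stays semialgebraic. For the centers singular at $b=c$ I would take the limit of the coordinates as $c\to b$ and argue by continuity that this limit is the center of the isosceles triangle. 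An unexpected sign change would show up here, for instance if such a center lands on $BC$ when $a=b$, in the equilateral limit.
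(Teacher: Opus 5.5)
Your overall scheme matches the paper's. The paper's proof just runs \texttt{FindInstance} on $v(u+v+w)\le 0$ or $w(u+v+w)\le 0$ under $b=c>a$ for each $n$. Your one-parameter reduction, with witnesses for the exceptions and a CAD or factoring check for the rest, is the same computation made explicit. Your treatment of the exceptional list, however, has two concrete problems.

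\textbf{The bullet on $X_{99}$ and $X_{100}$ is wrong on two counts.}

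First, in an isosceles triangle these centers, like $X_{59}$, coincide with the vertex $A$. Clear the denominators $b-c$, $b^2-c^2$, $(b-c)^2$ in their ETC coordinates and then set $c=b$: you get $(1:0:0)$. Your own limit procedure gives the same thing, and the paper states it right after the theorem. They are exceptions only because the interior of angle $A$ is open, so $v=0$ fails the strict inequality.

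Second, lying below $BC$ would not put a point outside angle $A$ anyway. The angle is an unbounded cone. On the symmetry axis, the points outside it are exactly $A$ and the ray from $A$ pointing away from $BC$; the midpoint of arc $BC$ is strictly inside the angle.

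Two smaller slips are related. Your claim $v=w$ fails for the raw forms $a/(b-c)$ and $1/(b^2-c^2)$: these are antisymmetric in $b,c$ and give $v=-w$ with $u$ undefined, so you must clear denominators before substituting. Also, $X_{88}$ is not singular at $b=c$; its denominators $b+c-2a,\dots$ vanish only in the equilateral case.

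\textbf{For $n=70$ there is no genuine witness.}

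$X_{70}$ is the isogonal conjugate of $X_{26}$. With $b=c=1$ you get $X_{26}=(u:v:v)$, where
$u=4a^4-a^6-4<0$, $v=a^4-4a^2+2$, and $u+2v=-a^2(a^2-2)(a^2-4)<0$ on $0<a<1$.
So $X_{26}$ lies on the far side of $A$ when $\angle A<45^\circ$, at $A$ when $\angle A=45^\circ$, and inside the triangle when $45^\circ<\angle A<60^\circ$.

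After cancelling the common factor, its conjugate is $(a^2v:u:u)$. This satisfies $u(a^2v+2u)>0$ throughout, because $a^2v+2u=-a^6+4a^4+2a^2-8<0$. Hence $X_{70}$ is always strictly inside angle $A$, and it is the midpoint of $BC$ when $\angle A=45^\circ$.

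The inequality $v(u+v+w)\le 0$ can be met only with the uncancelled coordinates $(a^2v^2:uv:uv)$. These vanish identically at $\angle A=45^\circ$. Presumably this degenerate shape is what the paper's test detects; the paper itself later remarks that $X_{70}$ leaves angle $A$ only when $a>b$.

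Under the limit convention you propose for degenerate coordinates, $70$ would drop out of the exceptional list. So you must either state explicitly the paper's implicit convention that an undefined center counts as not inside, or flag $70$ as an artifact of the statement.
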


\begin{proof}
The proof is the same as the proof of Theorem~\ref{thm:insideIso}, except that the constraint
is replaced by the following Mathematica code.

\begin{code}{0.4in}{5in}
\begin{verbatim}
constraint = triang && isosceles && (b>a);
\end{verbatim}
\end{code}

Again, each $X_n$ is tested with routine \texttt{sometimeOutsideQ[Xn]}.
\end{proof}

\begin{theorem}
Let $ABC$ be an isosceles triangle with base $BC$.
Then $X_n$ coincides with vertex $A$ when $n\in \{59, 99, 100\}$.
\end{theorem}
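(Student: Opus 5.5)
Vertex $A$ has barycentric coordinates $(1:0:0)$. So the plan is to substitute $b=c$ into the ETC coordinates of $X_{59}$, $X_{99}$, $X_{100}$ and check that the last two coordinates vanish identically while the first does not. The computation can be done by hand, with the same Mathematica setup as in Theorem~\ref{thm:insideIso} available as a cross-check. Concretely, a point $(u:v:w)$ equals $A$ exactly when $v=w=0$ and $u\neq 0$. The first two centers are easy.

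\emph{$X_{100}$.} Its coordinates are
$$\left(\frac{a}{b-c}:\frac{b}{c-a}:\frac{c}{a-b}\right).$$
Multiplying through by $(b-c)(c-a)(a-b)$ gives
$$\bigl(a(c-a)(a-b):b(b-c)(a-b):c(b-c)(c-a)\bigr).$$
At $b=c$ this becomes $\bigl(-a(a-b)^2:0:0\bigr)$. The first entry is nonzero for $a\neq b$, so the point is $A$.

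\emph{$X_{99}$.} Its coordinates are $\bigl(1/(b^2-c^2):\cdots\bigr)$. Multiplying by the product of the three differences of squares gives
$$\bigl((c^2-a^2)(a^2-b^2):(b^2-c^2)(a^2-b^2):(b^2-c^2)(c^2-a^2)\bigr).$$
At $b=c$ this becomes $\bigl(-(a^2-b^2)^2:0:0\bigr)$.

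\emph{$X_{59}$.} Its coordinates are $\bigl(a^2/(b-c)^2\cdot f(a,b,c):\cdots\bigr)$ for a certain polynomial factor $f$ (in ETC, $a^2/[(b-c)^2(b+c-a)]$ up to an isogonal-type form). After clearing the common denominator containing $(b-c)^2(c-a)^2(a-b)^2$, the second and third coordinates each carry the factor $(b-c)^2$ and vanish at $b=c$. It then remains to verify that the first coordinate is $a^2(c-a)^2(a-b)^2$ times a factor that stays nonzero. This factor is $(a+b-c)(c+a-b)$ or similar, which is positive for a nondegenerate triangle when $a\neq b$.

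\emph{Main obstacle.} For all three centers, the case $a=b=c$ makes everything vanish, so the theorem must be read for non-equilateral isosceles triangles, where the centers are defined. The hardest part is the bookkeeping for $X_{59}$, since its ETC form is messier. I would first simplify its coordinates with \texttt{Together} and \texttt{Factor} in Mathematica, then apply \texttt{FindInstance} in the spirit of Theorem~\ref{thm:insideIso} to confirm there is no triangle with $b=c$, $a\neq b$ and a nonzero second or third coordinate. The same check should confirm the first coordinate is never zero under these constraints.
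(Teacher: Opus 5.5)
Your proposal is correct and follows essentially the paper's route: the paper substitutes $b=c$ into the ETC coordinates and has Mathematica verify that the cross product of the coordinate vector of $X_n$ with $(1,0,0)$ vanishes, which is exactly your check that the second and third coordinates are zero. Your hand computations are right, including your recalled form $a^2/[(b-c)^2(b+c-a)]$ for $X_{59}$ (the isogonal conjugate of $X_{11}$), which gives first coordinate $a^4(a-b)^4$ at $b=c$; your extra check that this first coordinate is nonzero, which excludes the equilateral case, is a small improvement, since the cross-product test alone cannot distinguish $A$ from the zero vector.
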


\begin{proof}
We use the following Mathematica code.

\begin{code}{0.4in}{5in}
\begin{verbatim}
ptA= {1,0,0};
Simplify[Cross[Xn, ptA] == {0,0,0}, constraint]
\end{verbatim}
\end{code}

The \texttt{Simplify} command then returns \texttt{True} if and only if $X_n$ coincides with vertex~$A$.
\end{proof}

\begin{corollary}
Let $ABC$ be an isosceles triangle with base $BC$ and $b>a$.
Then for $1\leq n\leq 100$, $X_n$ always lies outside angle~$A$ only when
$X_n$ coincides with vertex~$A$.
\end{corollary}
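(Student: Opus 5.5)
The corollary should follow by combining Theorem~\ref{thm:inside} with the preceding theorem on coincidence with vertex~$A$. Interpret ``always lies outside angle~$A$'' as: for every isosceles triangle with base $BC$ and $b>a$, $X_n$ is not in the open region bounded by rays $\overrightarrow{AB}$ and $\overrightarrow{AC}$.

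The first step restricts the candidates. By Theorem~\ref{thm:inside}, every $X_n$ with $1\le n\le 100$ outside the exceptional set $E=\{18,26,30,59,68,70,87,90,91,93,96,99,100\}$ always lies inside angle~$A$ under the constraint $b>a$. Such an $X_n$ therefore certainly does not always lie outside. Only $n\in E$ remain to be examined.

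The second step handles $n\in\{59,99,100\}$. By the preceding theorem, these centers coincide with vertex~$A$ for every isosceles triangle with base $BC$, in particular when $b>a$. Vertex~$A$ is not in the open angular region, so these centers always lie outside angle~$A$, as the corollary asserts.

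The third step, which is the main obstacle, is to show that the remaining ten centers, $n\in\{18,26,30,68,70,87,90,91,93,96\}$, do \emph{not} always lie outside. Two of these need separate care.
\begin{itemize}
\item For $X_{30}$, which is a point at infinity, I would take the convention that a point on the line at infinity is not ``inside'' in the sense of the Theorem (since $u+v+w=0$). Then either $X_{30}$ must be excluded from the count as not being a finite point, or it is a genuine exception. I would state the convention explicitly: the corollary concerns finite points, and $X_{30}$ is excluded.
\item For the others, I would run the complementary Mathematica test. Using \texttt{FindInstance} with the constraint $b=c$, $b>a$, and the triangle inequalities, I would look for parameter values satisfying $v(u+v+w)>0$ and $w(u+v+w)>0$. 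Exhibiting a single such triangle for each center shows it is sometimes inside.
\end{itemize}
Since $b=c$, the coordinates reduce to rational functions of $a$ and $b$, and by homogeneity one can set $b=1$ with $0<a<1$. Each test is then a one-variable sign check, and for most centers a numerical evaluation at a sample value such as $a=1/2$ should already produce the required instance. Where a center is sometimes inside, the corollary follows: among $1\le n\le 100$, only the centers coinciding with $A$ always lie outside.
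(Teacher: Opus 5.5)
Your proposal is correct and follows the route the paper intends. The corollary is presented, without separate proof, as a consequence of Theorem~\ref{thm:inside} and the theorem that $X_{59}$, $X_{99}$, $X_{100}$ coincide with $A$; your third step (showing the other exceptional centers are sometimes inside, and setting aside $X_{30}$, for which the literal statement would fail) supplies a step the paper leaves unstated.

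One correction: your sample $a=1/2$, $b=c=1$ is the 1--2--2 triangle. There the paper itself notes that $X_{18}$, $X_{26}$, $X_{68}$, $X_{93}$ lie outside angle~$A$ and $X_{87}$ coincides with $A$, so it fails for five of the ten centers. Take witnesses near the equilateral shape instead; for instance, with $b=c$ one gets $X_{87}=(a/(2a-b):1:1)$, which lies inside the triangle for $b/2<a<b$.
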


In a 1--2--2 triangle, $X_n$ lies outside angle $A$ for $n\in\{18,26,68,93\}$.\\
In a 11--16--16 triangle, $X_n$ lies outside angle $A$ or $n\in\{90,91,96\}$.\\
In a 7--16--16 triangle, $X_{87}$ lies outside angle $A$.\\
In a 1--2--2 triangle, $X_{87}$ coincides with vertex $A$.\\

In an isosceles triangle, $X_{70}$ can lie outside vertex angle $A$ but only if $a>b$.\\
In a 9--8--8 triangle, $X_{70}$ lies outside angle $A$.\\

In $\triangle ABC$, a point is said to be \emph{above} side $BC$ if it
lies on the same side of $BC$ as vertex $A$.
From Lemma~\ref{lemma:7areas}, we get the following result.

\begin{theorem}
\label{thm:above}
Let $P$ have barycentric coordinates $(u:v:w)$ with respect to $\triangle ABC$.
Then $P$ is above side $BC$ if and only if
$$u(u+v+w)>0.$$
\end{theorem}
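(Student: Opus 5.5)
The plan mirrors the proof of the preceding theorem on angle~$A$. The argument has three steps: reduce to normalized barycentric coordinates, read off the sign condition from Lemma~\ref{lemma:7areas}, and clear the denominator.

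First, since $P$ is a finite point, it does not lie on the line at infinity, so $u+v+w\neq 0$. Homogeneity of barycentric coordinates then lets us pass to the normalized coordinates
$$\left(\frac{u}{u+v+w}:\frac{v}{u+v+w}:\frac{w}{u+v+w}\right).$$

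Second, by Lemma~\ref{lemma:7areas} (equivalently, the signed-area interpretation stated just before it), a point with normalized coordinates $(p:q:r)$ lies on the same side of $BC$ as $A$ exactly when $p>0$. This is precisely the definition of being above side $BC$. So $P$ is above $BC$ if and only if $u/(u+v+w)>0$.

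Third, we multiply this inequality by the positive quantity $(u+v+w)^2$, which preserves its sense, and obtain $u(u+v+w)>0$. Conversely, if $u(u+v+w)>0$, dividing by $(u+v+w)^2>0$ recovers $u/(u+v+w)>0$. This establishes both directions of the equivalence.

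There is no real obstacle here. The only point requiring care is noting explicitly that $u+v+w\neq0$, so that normalization is legitimate and $(u+v+w)^2$ is strictly positive. We should also remark that points on line $BC$ itself, where $u=0$, are correctly excluded by the strict inequality.
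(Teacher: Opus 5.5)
Your proposal is correct and follows essentially the same argument as the paper: normalize using $u+v+w\neq 0$, read off $p>0$ from Lemma~\ref{lemma:7areas}, and multiply by $(u+v+w)^2>0$. Your explicit treatment of the converse direction and of points on $BC$ is a minor addition that the paper leaves implicit.
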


\begin{proof}
Since barycentric coordinates are homogeneous, the coordinates $(u:v:w)$ and $(uk:vk:wk)$
represent the same point for any $k\neq 0$.
The normalized barycentric coordinates for $P$ are therefore
$$\left(\frac{u}{u+v+w}:\frac{v}{u+v+w}:\frac{w}{u+v+w}\right).$$
Note that $u+v+w\neq 0$ since $P$ is not on the line at infinity.
From Figure~\ref{fig:7areas}, we see that for normalized coefficients, $P=(p:q:r)$ is above side $BC$
if and only if $p>0$.
In terms of $u$, $v$, and $w$, this means that
$u/(u+v+w)>0$.
Multiplying this inequality by the positive quantity $(u+v+w)^2$ preserves
the sense of the inequality and gives us the desired result.
\end{proof}

\begin{theorem}
\label{thm:aboveOrder}
Let $ABC$ be an acute triangle with smallest side $BC$.
Then $X_n$ always lies above $BC$ when\\
$n\in \{1, 2, 3, 4, 5, 6, 7, 8, 9, 10, 12, 13, 15, 17, 19, 20, 21, 22, 25,
26, 27, 28, 29, 31, 32,$\\
$33, 34, 35, 37, 38, 39, 40, 41, 42, 45, 48,
51, 53, 54, 55, 56, 57, 58, 59, 60, 61, 63, 65, 68,$\\
$69, 71, 72, 73, 75, 76, 77, 78, 79, 81, 82, 83, 85, 86, 89, 92, 95, 97, 99, 100\}$.\\
In addition, $X_n$ always lies on or above $BC$ when $n=11$.\\
Center $X_n$ lies on the line at infinity when $n=30$.\\
Center $X_n$ always lies below $BC$ when $n\in \{16, 23, 36, 44, 50\}$.\\
Center $X_n$ can lie above or below $BC$ when
$n\in \{14,18,24,43,46,47,49,52,62,$\\
$64,66,67,70,74,80,84,87,88,90,91,93,94,96,98\}$.
\end{theorem}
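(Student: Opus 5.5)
The plan mirrors the proof of Theorem~\ref{thm:insideIso}, with Theorem~\ref{thm:above} supplying the test. Let \texttt{Xn=\{u,v,w\}} be the barycentric coordinates of $X_n$ from \cite{ETC}, written as polynomials or rational functions in $a,b,c$. We encode the hypothesis as the semialgebraic constraint
$$a,b,c>0,\quad a+b>c,\ b+c>a,\ c+a>b,\quad a<b,\ a<c,$$
$$b^2+c^2>a^2,\quad c^2+a^2>b^2,\quad a^2+b^2>c^2,$$
the last three expressing acuteness. In fact $b^2+c^2>a^2$ is implied by $a$ being the smallest side. We may also normalize $a=1$ by homogeneity, which removes one variable and speeds up the quantifier elimination.

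For each $n\le 100$ with $X_n\ne X_{30}$, we put $F_n=u(u+v+w)$. By Theorem~\ref{thm:above}, $X_n$ is above $BC$ exactly when $F_n>0$. We then classify $n$ by two \texttt{FindInstance} queries under the constraint. The first searches for a point with $F_n\le 0$; the second searches for a point with $F_n\ge 0$, or with $F_n>0$ when checking ``always below.''
\begin{itemize}
\item If the first query finds nothing, $X_n$ is always above $BC$.
\item If the second finds nothing, $X_n$ is always below $BC$.
\item If both succeed, we record explicit rational witness triangles, one above and one below.
\item For $X_{11}$, the first query succeeds only with $F_{11}=0$. So we check separately that $F_{11}<0$ has no instance, while $F_{11}=0$ does, which happens at the boundary where $X_{11}$ touches $BC$ (e.g.\ the isosceles case $b=c$).
\end{itemize}
$X_{30}$ is handled directly: its coordinates sum identically to $0$, which we verify with \texttt{Simplify}.

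For the ``can lie above or below'' list, this step is fully rigorous, since it suffices to exhibit two concrete acute triangles with $a$ smallest and check the sign of $F_n$ by exact rational arithmetic. The ``always'' claims are universal statements. For these, the proof relies on \texttt{FindInstance} (equivalently \texttt{Reduce} or \texttt{Resolve} on $\forall$ statements) being a complete decision procedure over the reals, which it is for polynomial constraints via cylindrical algebraic decomposition.

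The main obstacle I expect is complexity. Some centers have high-degree coordinates or involve square roots, e.g.\ $X_{13}$, $X_{15}$, $X_{17}$, $X_{61}$ through trigonometric or area terms. For these, I would first substitute $S=\sqrt{(a+b+c)(-a+b+c)(a-b+c)(a+b-c)}/4$ as a new variable $S$ with the constraint $16S^2=\cdots$, $S>0$. I would then try factoring $F_n$ and showing each factor has constant sign on the region, using obviously positive factors like $b^2+c^2-a^2$ or $b-a$. If quantifier elimination still stalls, I would parametrize the region, e.g.\ $b=a+s$, $c=a+s+t$ with $s,t\ge0$ and an acuteness parameter. I would then show the resulting polynomial has all positive coefficients, which proves positivity without cylindrical algebraic decomposition.
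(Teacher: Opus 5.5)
The paper gives no separate proof of this theorem, but the intended argument is the same as yours: use Theorem~\ref{thm:above} and decide the sign of $u(u+v+w)$ for each center with \texttt{FindInstance} under the acute, $a$-smallest constraint, exactly as in the proof of Theorem~\ref{thm:insideIso}. One slip to fix: certifying ``always below'' requires the non-strict query $F_n\ge 0$ to have no instance, since that is what rules out $X_n$ landing on $BC$ or on the line at infinity; the strict query $F_n>0$ is the one for finding an above-witness, and your description has these two reversed.
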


\begin{theorem}[Distance From Point to Line]
\label{thm:distToLine}
The distance from the point\goodbreak $P=(p:q:r)$ to the line $ux+vy+wz=0$ is
$$\sqrt{\frac{\left(2a^2b^2+2b^2c^2+2c^2a^2-a^4-b^4-c^4\right) (p u+q v+r w)^2}{4 (p+q+r)^2 \left(a^2 (u-v) (u-w)+(v-w)
   \left(b^2 (v-u)+c^2 (u-w)\right)\right)}}.$$
\end{theorem}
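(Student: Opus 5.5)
Write $S^2=2a^2b^2+2b^2c^2+2c^2a^2-a^4-b^4-c^4$, so that $S^2=16\,[ABC]^2$ by Heron's formula. The plan is to reduce the formula to the classical fact that, for normalized coordinates, the value of the linear form $ux+vy+wz$ is an affine function of the point, and then to compute the scale factor relating it to Euclidean distance.

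\textbf{Step 1 (normalize the point).} Replace $P$ by its normalized coordinates $(p',q',r')=(p,q,r)/(p+q+r)$, as in the proof of Theorem~\ref{thm:above}. The function
$$f(X)=ux+vy+wz,$$
evaluated on normalized coordinates, is then an affine function on the plane, and it vanishes exactly on the line $\ell: ux+vy+wz=0$. Hence
$$d(P,\ell)=\frac{|f(P)|}{|\nabla f|},$$
where $\nabla f$ is the (constant) Euclidean gradient of $f$. The numerator is $|pu+qv+rw|/|p+q+r|$, which accounts for the factor $(pu+qv+rw)^2/(p+q+r)^2$ in the statement.

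\textbf{Step 2 (compute $|\nabla f|$).} Place $A$ at the origin and set $\mathbf b=\vec{AB}$, $\mathbf c=\vec{AC}$. Then a point $X=A+y\,\mathbf b+z\,\mathbf c$ has $f=u+(v-u)y+(w-u)z$. Writing $\beta=v-u$ and $\gamma=w-u$, the squared gradient norm is the quadratic form with the inverse of the Gram matrix
$$G=\begin{pmatrix} c^2 & \mathbf b\cdot\mathbf c\\ \mathbf b\cdot\mathbf c & b^2\end{pmatrix},\qquad \mathbf b\cdot\mathbf c=\tfrac12(b^2+c^2-a^2).$$
Since $\det G=b^2c^2-(\mathbf b\cdot\mathbf c)^2=S^2/4$, this gives
$$|\nabla f|^2=\frac{4\bigl(b^2\beta^2-(b^2+c^2-a^2)\beta\gamma+c^2\gamma^2\bigr)}{S^2}.$$

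\textbf{Step 3 (match the denominator).} Substitute $\beta=v-u$ and $\gamma=w-u$ and expand. The claim to verify is
$$b^2(v-u)^2-(b^2+c^2-a^2)(v-u)(w-u)+c^2(w-u)^2=a^2(u-v)(u-w)+(v-w)\bigl(b^2(v-u)+c^2(u-w)\bigr).$$
Collect terms by $a^2$, $b^2$ and $c^2$:
\begin{itemize}
\item The $a^2$ terms match directly.
\item The $b^2$ terms give $(v-u)^2-(v-u)(w-u)=(v-u)(v-w)$, which matches.
\item The $c^2$ terms give $(w-u)^2-(v-u)(w-u)=(w-u)(w-v)=(v-w)(u-w)$, which also matches.
\end{itemize}
Combining, $d^2=f(P)^2/|\nabla f|^2$ yields exactly the stated expression.

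The main obstacle is Step 2, namely getting the inverse-Gram computation and its determinant right. This is the one place where sign or factor-of-2 errors could creep in. As a sanity check, take the line $x=0$ (that is, $u=1$, $v=w=0$). The denominator then becomes $4(p+q+r)^2a^2$, so the formula gives $|p'|\cdot 2[ABC]/a$, which is the correct distance to $BC$. Alternatively, the whole identity can be confirmed symbolically in Mathematica, consistent with the paper's approach.
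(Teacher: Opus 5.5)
Your proof is correct, and it takes a genuinely different route from the paper.

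The paper's proof is constructive and relies on cited formulas from Grozdev. It writes down the line through $P$ perpendicular to $\ell$ (formula (7)) and intersects it with $\ell$ (formula (5)) to obtain the foot $F$. It then applies the barycentric two-point distance formula (9) to $PF$. The resulting simplification is not shown; the paper defers it to its Mathematica notebook.

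You never compute the foot. You observe that $ux+vy+wz$, evaluated on normalized coordinates, is an affine function vanishing on $\ell$, so the distance is $|f(P)|/|\nabla f|$. You then obtain $|\nabla f|^2$ from the inverse Gram matrix of $\vec{AB},\vec{AC}$. The one line worth adding there is this: with $M=[\mathbf b\;\mathbf c]$ one has $\nabla f=M^{-T}(\beta,\gamma)^{T}$, so $|\nabla f|^2=(\beta,\gamma)(M^{T}M)^{-1}(\beta,\gamma)^{T}$, and $M^{T}M=G$. I checked $\det G=S^2/4$ and the coefficient matching for $a^2$, $b^2$, $c^2$ in Step 3, and both are right.

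Your route gives a self-contained, hand-checkable derivation that explains the shape of the formula:
\begin{itemize}
\item the numerator is the value of the linear form at the normalized point;
\item the denominator equals $\tfrac{S^2}{4}|\nabla f|^2$, with $G^{-1}$ positive definite.
\end{itemize}
In particular, the denominator is positive exactly when $u,v,w$ are not all equal, that is, when $\ell$ is not the line at infinity. So the expression under the square root is well defined whenever, in addition, $p+q+r\neq 0$. The paper's proof never addresses this.

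The paper's route fits its mechanical, CAS-driven methodology and produces the foot $F$ as a byproduct. The cost is reliance on three external formulas and an unexhibited simplification.
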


\begin{proof}
The equation of the line through $P$ perpendicular to the line $L$ with equation  $ux+vy+wz=0$ is
given by formula (7) in \cite{Grozdev}. The intersection of these two lines can be found using
the intersection formula (5) in  \cite{Grozdev}. This gives us the coordinates for $F$, the foot
of the perpendicular from $P$ to $L$. Then, using the formula (9) from \cite{Grozdev} for the
distance between two points, we get the distance from $P$ to $F$ which is the distance from point $P$
to line $L$.
\end{proof}

\begin{corollary}
\label{cor:distToLine}
The distance from point $P=(p:q:r)$ to side $BC$ of $\triangle ABC$ is
$$\sqrt{\frac{p^2 \left(2a^2b^2+2b^2c^2+2c^2a^2-a^4-b^4-c^4\right)}{4 a^2 (p+q+r)^2}}.$$
\end{corollary}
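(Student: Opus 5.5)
This corollary is a direct specialization of Theorem~\ref{thm:distToLine}. First I identify side $BC$ as a line in barycentric coordinates. Points on $BC$ are exactly those whose first coordinate vanishes, so the line $BC$ has equation $x=0$. In the notation $ux+vy+wz=0$ of Theorem~\ref{thm:distToLine}, this means taking $u=1$, $v=0$, $w=0$. Any nonzero multiple would also work, since the formula is homogeneous of degree zero in $(u,v,w)$.

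Next I substitute these values into the formula of Theorem~\ref{thm:distToLine}.

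\textbf{Numerator.} The factor $pu+qv+rw$ becomes $p$. The numerator is therefore $\left(2a^2b^2+2b^2c^2+2c^2a^2-a^4-b^4-c^4\right)p^2$.

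\textbf{Denominator.} Here I compute the bracketed expression
$$a^2(u-v)(u-w)+(v-w)\left(b^2(v-u)+c^2(u-w)\right)$$
at $(u,v,w)=(1,0,0)$. The first term gives $a^2\cdot 1\cdot 1=a^2$. The second term carries the factor $v-w=0$, so it vanishes. The denominator is therefore $4(p+q+r)^2a^2$.

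Putting these together gives exactly the stated expression
$$\sqrt{\frac{p^2\left(2a^2b^2+2b^2c^2+2c^2a^2-a^4-b^4-c^4\right)}{4a^2(p+q+r)^2}}.$$

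As a sanity check, I note that $2a^2b^2+2b^2c^2+2c^2a^2-a^4-b^4-c^4=16K^2$, where $K$ is the area of $\triangle ABC$. The formula then reduces to $\frac{2K}{a}\cdot\left|\frac{p}{p+q+r}\right|$. This is the altitude from $A$ times the normalized first coordinate, which matches the signed-area meaning of barycentric coordinates recalled before Lemma~\ref{lemma:7areas}. This check could even serve as an independent proof.

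There is no real obstacle. The only points needing care are the correct identification of $BC$ as $x=0$ and the simplification of the denominator. I would also note that $p+q+r\neq 0$, which holds for any finite point $P$.
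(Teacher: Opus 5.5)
Your proposal is correct and follows the paper's proof: the paper likewise identifies $BC$ as the line $x=0$, substitutes $u=1$, $v=0$, $w=0$ into Theorem~\ref{thm:distToLine}, and reads off the result. Your explicit simplification of the denominator bracket to $a^2$, and your remark that the result equals $\frac{2K}{a}\left|\frac{p}{p+q+r}\right|$, also match the paper, which records the latter as Corollary~\ref{cor:distToLineK}.
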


\begin{proof}
This follows from the fact that the equation for the line $BC$ is $x=0$,
so we can put $u=1$, $v=0$, and $w=0$ in Theorem~\ref{thm:distToLine}.
\end{proof}

\begin{corollary}
\label{cor:distToLineK}
The distance from point $P=(p:q:r)$ to side $BC$ of $\triangle ABC$ is
$$\frac{2K}{a}\sqrt{\frac{p^2}{(p+q+r)^2}}=\frac{2K}{a}\left|\frac{p}{p+q+r}\right|$$
where $K$ is the area of $\triangle ABC$.
\end{corollary}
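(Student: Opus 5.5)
The plan is to start from Corollary~\ref{cor:distToLine} and rewrite the radicand using Heron's formula. By Heron's formula, the area $K$ of $\triangle ABC$ satisfies
$$16K^2=(a+b+c)(-a+b+c)(a-b+c)(a+b-c)=2a^2b^2+2b^2c^2+2c^2a^2-a^4-b^4-c^4.$$
Expanding the product of the four linear factors confirms the second equality. It is easiest to group the factors as $\bigl((b+c)^2-a^2\bigr)\bigl(a^2-(b-c)^2\bigr)$ and multiply out.

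Substituting $16K^2$ for the quartic expression in Corollary~\ref{cor:distToLine}, the distance from $P$ to $BC$ becomes
$$\sqrt{\frac{16K^2p^2}{4a^2(p+q+r)^2}}=\sqrt{\frac{4K^2}{a^2}\cdot\frac{p^2}{(p+q+r)^2}}.$$
Since $K>0$ and $a>0$, the factor $4K^2/a^2$ comes out of the square root as $2K/a$. This gives the first displayed form. The second form follows from $\sqrt{x^2}=|x|$ applied to $x=p/(p+q+r)$, which is a real number because $p+q+r\neq0$ for a finite point.

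As an independent check, the result can be read off geometrically without Theorem~\ref{thm:distToLine}. With normalized coordinates, $p/(p+q+r)$ is the signed ratio $[PBC]/[ABC]$. Also $[PBC]=\tfrac12 a\cdot d(P,BC)$ and $K=[ABC]$. Together these give $d(P,BC)=\frac{2K}{a}\left|\frac{p}{p+q+r}\right|$ directly.

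The only real obstacle is verifying the Heron identity for the quartic. This is routine algebra, or a one-line \texttt{Simplify} in Mathematica, so I expect no genuine difficulty.
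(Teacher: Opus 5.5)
Your proof is correct and takes the same route as the paper: it also obtains this from Corollary~\ref{cor:distToLine} by using Heron's formula to replace $2a^2b^2+2b^2c^2+2c^2a^2-a^4-b^4-c^4$ with $16K^2$ and then pulling $2K/a$ out of the radical. Your explicit factorization check and the area-ratio cross-check are valid additions, though neither is needed.
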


\begin{proof}
This follows because the formula
$$K=\frac{1}{4}\sqrt{2a^2b^2+2b^2c^2+2c^2a^2-a^4-b^4-c^4}$$
follow algebraically from Heron's formula, $K=\sqrt{s(s-a)(s-b)(s-c)}$ when
$s=(a+b+c)/2$.
\end{proof}

\begin{corollary}
The distance from vertex $A$ to side $BC$ of $\triangle ABC$ is
$$\frac{2K}{a}$$
where $K$ is the area of $\triangle ABC$.
\end{corollary}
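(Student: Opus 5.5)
This is an immediate specialization of Corollary~\ref{cor:distToLineK}, which already gives the distance from an arbitrary point $P=(p:q:r)$ to side $BC$ as $\frac{2K}{a}\left|\frac{p}{p+q+r}\right|$. The plan is to apply that corollary with $P$ equal to vertex $A$ and simplify the absolute-value factor.

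First I will record that vertex $A$ has barycentric coordinates $(1:0:0)$. This holds because the barycentric coordinates of a point are proportional to the signed areas of the triangles $PBC$, $PCA$, $PAB$. For $P=A$ the last two areas vanish and the first equals the area of $\triangle ABC$. Equivalently, $A$ is the point whose normalized coordinates are $(1,0,0)$; this is the convention already used in the Mathematica code \texttt{ptA=\{1,0,0\}} earlier in the paper.

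Next I will substitute $p=1$, $q=0$, $r=0$ into Corollary~\ref{cor:distToLineK}. Then $p+q+r=1\neq 0$, so the formula applies, and the factor $\left|\frac{p}{p+q+r}\right|$ equals $1$. The distance is therefore $\frac{2K}{a}$.

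There is essentially no obstacle. The only point to check is that the hypotheses of Corollary~\ref{cor:distToLineK} hold: $P$ must not lie on the line at infinity, which is true since $p+q+r=1$. As an independent check, the result agrees with the elementary formula $K=\tfrac12\,a\,h_a$, where $h_a$ is the altitude from $A$.
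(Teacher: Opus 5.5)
Your proof is correct and matches the paper's: it also substitutes $p=1$, $q=0$, $r=0$ for vertex $A$ into Corollary~\ref{cor:distToLineK}. It likewise notes that the result follows directly from $K=\tfrac12 a h_a$.
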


\begin{proof}
The coordinates for point $A$ are $(1:0:0)$, so let $p=1$, $q=0$, and $r=0$ in Corollary~\ref{cor:distToLineK}.
This also follows from the fact that the area of a triangle is half the base times the height.
\end{proof}

\textbf{Definition.}
Let $d(P)$ denote the \emph{signed distance} from a point $P$ in the plane of $\triangle ABC$ to side $BC$.
The sign is positive if $P$ is above $BC$ and negative if $P$ is below $BC$.

\begin{theorem}
\label{thm:signedDistance}
The signed distance from point $P=(p:q:r)$ to side $BC$ of $\triangle ABC$ is
$$d(P)=\frac{2K}{a}\left(\frac{p}{p+q+r}\right).$$
\end{theorem}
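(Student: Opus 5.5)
We will combine Corollary~\ref{cor:distToLineK}, which gives the unsigned distance, with Theorem~\ref{thm:above}, which determines the sign. By Corollary~\ref{cor:distToLineK}, the distance from $P=(p:q:r)$ to side $BC$ is
$$\frac{2K}{a}\left|\frac{p}{p+q+r}\right|.$$
By definition, $d(P)$ equals this quantity when $P$ is above $BC$ and its negative when $P$ is below $BC$. Since $2K/a>0$, it is enough to show that the sign of $p/(p+q+r)$ agrees with the position of $P$ relative to $BC$.

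As in the proof of Theorem~\ref{thm:above}, we pass to normalized coordinates. Here $p+q+r\neq 0$ because $P$ is a finite point, and we write $p'=p/(p+q+r)$. Theorem~\ref{thm:above} says that $P$ is above $BC$ if and only if $p(p+q+r)>0$. Dividing by the positive quantity $(p+q+r)^2$, this is equivalent to $p'>0$. Consequently:
\begin{itemize}
\item if $P$ is above $BC$, then $|p'|=p'$, so $d(P)=\frac{2K}{a}p'$;
\item if $P$ is below $BC$, then $p'<0$ by Lemma~\ref{lemma:7areas}, so $|p'|=-p'$ and $d(P)=-\frac{2K}{a}|p'|=\frac{2K}{a}p'$;
\item if $P$ lies on $BC$, then $p=0$ and both sides of the formula are zero.
\end{itemize}
In all three cases the claimed formula holds.

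The only point needing care is the boundary case and the convention that \emph{below} means strictly on the opposite side of $BC$ from $A$. Lemma~\ref{lemma:7areas} handles this, since it states that $p'$ is negative exactly on that open half-plane. No symbolic computation should be needed beyond what Corollary~\ref{cor:distToLineK} already supplies.
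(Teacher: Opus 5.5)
Your proof is correct and follows the paper's argument essentially step for step: the unsigned distance comes from Corollary~\ref{cor:distToLineK}, the sign comes from Theorem~\ref{thm:above} after dividing by $(p+q+r)^2$, and you treat the same three cases (above, below, on $BC$). Citing Lemma~\ref{lemma:7areas} for the strict inequality $p/(p+q+r)<0$ in the below case is slightly more careful than the paper, which derives that strict inequality from Theorem~\ref{thm:above}, a statement only characterizing points above $BC$.
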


\begin{proof}
The (unsigned) distance is
$$\frac{2K}{a}\left|\frac{p}{p+q+r}\right|$$
by Corollary~\ref{cor:distToLineK}.
If $P$ is above $BC$, then by Theorem~\ref{thm:above}, $p(p+q+r)>0$.
Dividing both sides by the positive quantity $(p+q+r)^2$ shows that $p/(p+q+r)>0$.
Thus, the formula for $d(p)$ is correct when $P$ is above $BC$. Similarly,
if $P$ is below $BC$, Theorem~\ref{thm:above} implies that $p(p+q+r)<0$
or equivalently $p/(p+q+r)<0$.
Thus, the formula for $d(p)$ is correct when $P$ is below $BC$.
If $P$ lies on $BC$, then $p=0$ and the formula is also correct.
\end{proof}

%

\section{The Isosceles Order}
\label{section:iso}

From the definition of a triangle center, it is easy to see that in an isosceles triangle,
all triangle centers lie on the median to the base.
At first glance, it appears that the order of these triangle centers along the median is
essentially random and varies as the shape of the triangle changes.
However, if we only consider ``tall'' isosceles triangles, that is, ones
in which $b>a$, we find a little more order.

\goodbreak

\textbf{Definition.}
An isosceles triangle $ABC$ with base $BC$ is said to be \emph{tall} if $b>a$.

\textbf{Definition.}
We define the \emph{isosceles order} on triangle centers, $\prec$, by

\centerline{$P\prec Q$ if $P$ is closer to $A$ than $Q$}
\centerline{in all tall isosceles triangles $ABC$ with base $BC$.}

The symbols ``$P\prec Q$'' can be read as ``$P$ precedes $Q$'' or less precisely as ``$P$ is less than $Q$''.

Under this ordering, we find some order among the triangle centers as shown by the following theorem
which involves some of the triangle centers from $X_1$ to $X_{30}$.

\begin{theorem}
\label{thm:isosceles}
Using the isosceles order $\prec$, we have
$$X_{20}\prec  X_{22}\prec  X_8 \prec  X_3\prec  X_9\prec  X_{10}\prec  X_{21}\prec  X_2\prec  X_5\prec  X_{12}\prec  X_{17}\prec  X_1\prec  X_{13}$$
$$\prec  X_7\prec  X_6\prec X_4\prec  X_{27}\prec  X_{19}\prec  X_{28}\prec  X_{25}\prec  X_{11}\prec  X_{14}\prec  X_{16}\prec  X_{23}.$$
\end{theorem}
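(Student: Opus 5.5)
We reduce everything to a one-variable problem. In an isosceles triangle with base $BC$, every center lies on the median from $A$, so its distance from $A$ along that median is determined by its signed distance to $BC$. By Theorem~\ref{thm:signedDistance}, $d(P)=\frac{2K}{a}\cdot\frac{p}{p+q+r}$. The distance from $A$ to $P$ along the median is then $h-d(P)$, where $h=2K/a$ is the altitude. Hence $P$ is closer to $A$ than $Q$ exactly when $d(P)>d(Q)$, provided both points lie on the ray from the foot of the altitude through $A$ and beyond it in the same direction. We cannot assume this, so we first handle the general case. With $AP=|h-d(P)|$, the order of distances from $A$ is the order of $|1-p/(p+q+r)|$, and $1-\frac{p}{p+q+r}=\frac{q+r}{p+q+r}$. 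So for each center $X_n=(p:q:r)$ we set $f_n=\frac{q+r}{p+q+r}$ and compare $|f_n|$. By the comment after Theorem~\ref{thm:inside}, none of the listed centers is among the exceptions $\{18,26,30,59,68,70,87,90,91,93,96,99,100\}$, so all of them lie inside angle $A$ when $b>a$. Together with the vertex coincidence results, this should let us drop absolute values and conclude $f_n>0$ on the relevant region, which we verify by machine as well.

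Next we specialize. We put $b=c$ and, by homogeneity, normalize $a=1$, so that $b>1$ is a single parameter and each $f_n$ becomes a rational function $f_n(b)$. Then $P\prec Q$ is equivalent to $f_P(b)<f_Q(b)$ for all $b>1$. For each of the $23$ consecutive pairs in the chain we form $f_{Q}(b)-f_{P}(b)$, clear the positive denominators, and must show that the resulting expression is positive on $(1,\infty)$. This is done as in the proof of Theorem~\ref{thm:insideIso}: we run \texttt{FindInstance} (or \texttt{Reduce}) on the negation $f_Q-f_P\le 0 \wedge b>1$ and check that it returns no instance. Transitivity of $<$ then gives the whole chain, so only adjacent pairs need checking.

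Several centers involve radicals in the tall isosceles case: $X_{13}$, $X_{14}$, $X_{16}$, $X_{17}$ (through $\sqrt3$, area, and angle expressions) and $X_{7}$, $X_{8}$, $X_{9}$ (through $b+c-a$ and similar factors). Their barycentrics remain algebraic in $b$, with $K=\tfrac12\sqrt{b^2-\tfrac14}$ when $a=1$. We introduce $K$ as an auxiliary variable with $K>0$ and $4K^2=b^2-\tfrac14$, and let the decision procedure work in the semialgebraic setting.

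The main obstacle I expect is the adjacent pairs whose difference becomes tangent to zero or vanishes at the boundary $b=1$ (the equilateral triangle, where all centers coincide), or as $b\to\infty$. In those cases strict positivity on the open interval must be certified carefully. If cylindrical decomposition is slow, I would substitute $b=1+t^2$ (or $b=1+t$ with $t>0$), factor the numerator, and show that after removing a power of $t$ the remaining polynomial has only positive coefficients or no positive roots, for example by Sturm sequences.
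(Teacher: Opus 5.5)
Your proposal is correct and follows essentially the paper's approach: check each of the 23 adjacent inequalities with a symbolic decision procedure over the one-parameter family of tall isosceles triangles, then chain them by transitivity. The only real difference is what you compare. You use $AX_n/h=(q+r)/(p+q+r)$, obtained from the signed distance to $BC$ along the symmetry axis, and this needs Theorem~\ref{thm:inside} to drop the absolute value. The paper instead compares the squared distances $AX_n^2$ from the barycentric distance formula with $c=b$ under \texttt{Simplify}, which needs no sign argument. (Minor point: $X_7$, $X_8$, $X_9$ are rational in $a,b,c$, so only the centers involving the area bring in radicals.)
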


\begin{proof}
Let $d_n$ denote the square of the distance from $X_n$ to $A$.
Let us start with the first claim, $X_{20}\prec  X_{22}$.
Both $X_{20}$ and $X_{22}$ lie inside $\angle A$ (Theorem~\ref{thm:inside}).
Thus, all we need to show is that $d_{20}<d_{22}$.
Using the distance formula \cite[\S2]{Grozdev}, we find that
$$d_{20}=-\frac{a^6-3 a^2 \left(b^2-c^2\right)^2+2 \left(b^2-c^2\right)^2
   \left(b^2+c^2\right)}{a^4-2 a^2 \left(b^2+c^2\right)+\left(b^2-c^2\right)^2}.$$
When $c=b$, this simplifies to
$$d_{20}=\frac{a^4}{(2 b-a) (2 b+a)}.$$
Similarly,
$$d_{22}=\frac{a^2 b^2 c^2 \left(-a^8+2 a^6 \left(b^2+c^2\right)-2 a^2 \left(b^2-c^2\right)^2
   \left(b^2+c^2\right)+\left(b^4-c^4\right)^2\right)}{\left(a^6-a^4
   \left(b^2+c^2\right)-a^2 \left(b^4+c^4\right)+\left(b^2-c^2\right)^2
   \left(b^2+c^2\right)\right)^2}$$
and when $c=b$, this simplifies to
$$d_{22}=\frac{a^4 b^4 (2 b-a) (2 b+a)}{\left(a^4-2 a^2 b^2-2 b^4\right)^2}.$$
The inequality $d_{20}<d_{22}$ becomes
$$\frac{a^4}{(2 b-a) (2 b+a)}<\frac{a^4 b^4 (2 b-a) (2 b+a)}{\left(a^4-2 a^2 b^2-2 b^4\right)^2}.$$
We could prove this inequality using algebra, standard inequality techniques,
and some ingenuity, but instead, it is easier
to use a symbolic algebra system to prove the inequality.

We will use the Mathematica command\\
\centerline{\texttt{Simplify[expr,constraint]}}
which simplifies an expression, equation, or inequality, subject to the stated constraint.
If we let\\
\centerline{\texttt{isoConstraint = 0<a<b \&\& a+b>c \&\& b+c>a \&\& c+a>b \&\& b==c}}
and issue the Mathematica command\\
\centerline{\texttt{Simplify[d[20]<d[22],isoConstraint]},}
Mathematica responds with \texttt{True}, proving that the inequality is always true.

Using the \texttt{Simplify} command in Mathematica, all the other claims can be proved in the same manner.
\end{proof}

Figure~\ref{fig:isosceles30} shows a tall isosceles triangle and the centers
referenced in Theorem~\ref{thm:isosceles}.
As the shape of the triangle varies, the distances between these centers change relative to each other,
however the order of the centers remains fixed.

\begin{figure}[h!t]
\centering
\includegraphics[width=0.4\linewidth]{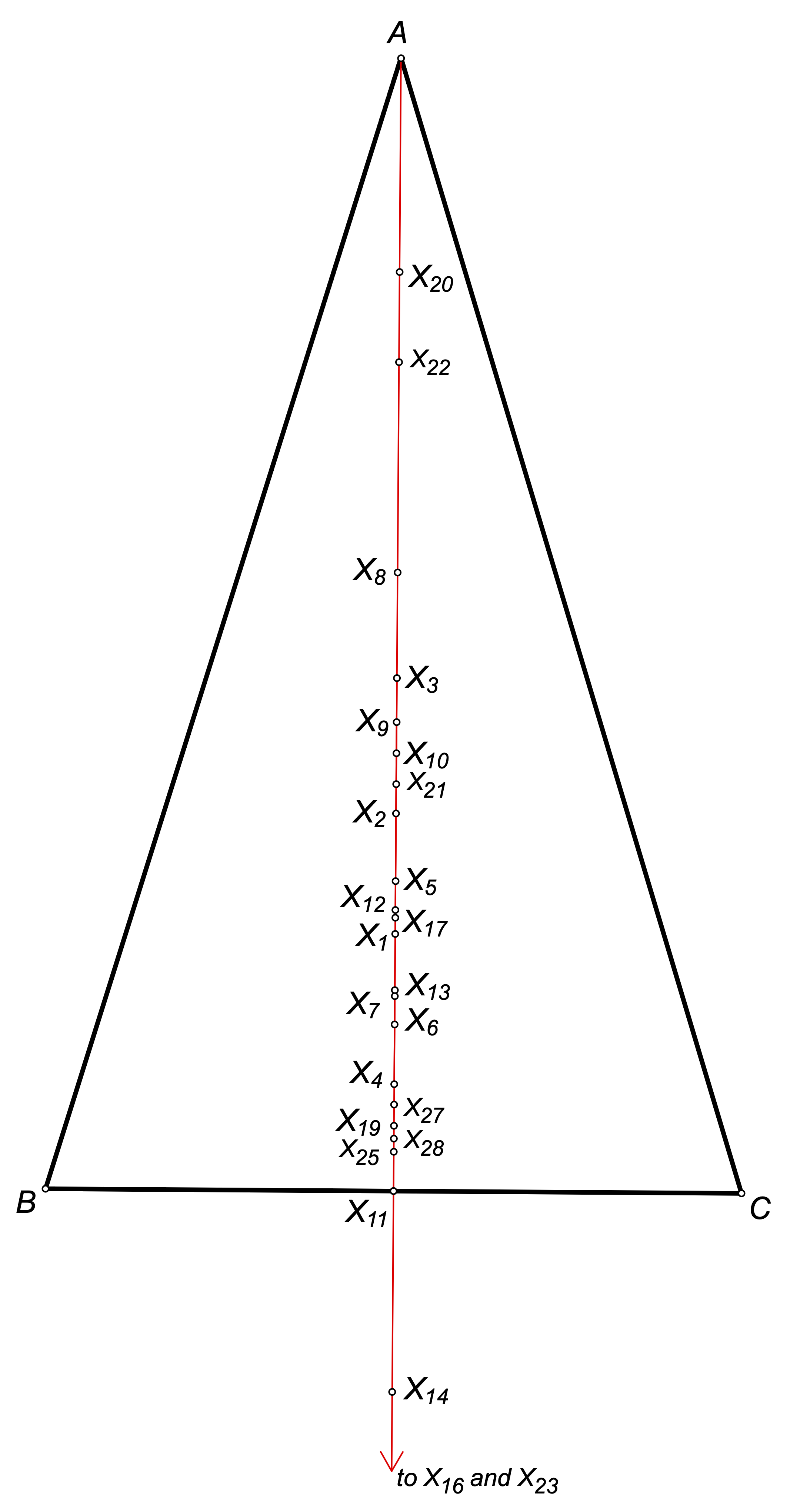}
\caption{Centers in a tall isosceles triangle}
\label{fig:isosceles30}
\end{figure}

You will note that triangle centers $X_{15}$, $X_{18}$, $X_{24}$, $X_{26}$, $X_{29}$, and $X_{30}$ were not included in Theorem~\ref{thm:isosceles}.
This is because these centers are not always in the same order with respect to the other triangle centers
or they lie outside angle $A$.
Let us look a little closer at the location of these centers.

\goodbreak
\textbf{Center $X_{15}$.}

Figure~\ref{fig:X15} shows that $X_{15}$ is sometimes closer to $A$ than $X_5$
and sometimes further away.

\begin{figure}[h!t]
\centering
\includegraphics[width=0.5\linewidth]{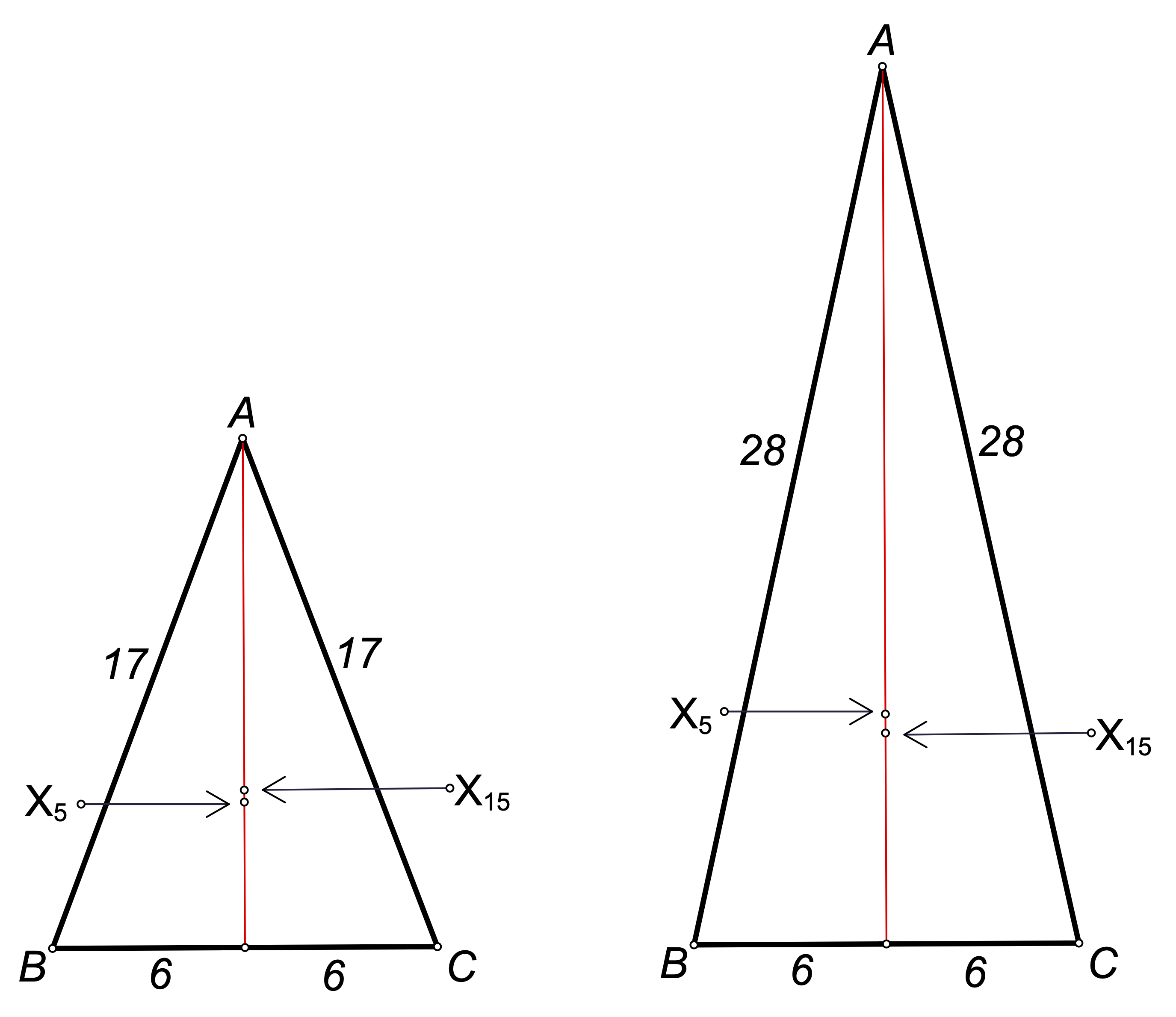}
\caption{Centers $X_{15}$ and $X_{5}$}
\label{fig:X15}
\end{figure}

Center $X_{15}$ is closer to $A$ than $X_{12}$ in a 1--4--4 triangle and further from $A$ in a 1--8--8 triangle.
We can, however, bound the location of $X_{15}$ as the following result shows.

\begin{theorem}
\label{thm:X15}
Let $ABC$ be a tall isosceles triangle with base $BC$.
Then
$$X_2\prec  X_{15} \prec  X_{17}.$$
\end{theorem}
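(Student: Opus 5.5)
In a tall isosceles triangle every center lies on the median from $A$ to $BC$. By Theorem~\ref{thm:inside}, $X_2$, $X_{15}$ and $X_{17}$ all lie inside angle~$A$, so each lies on the open ray from $A$ through the midpoint of $BC$. Comparing closeness to $A$ therefore reduces to comparing the squared distances $d_n$ from $X_n$ to $A$, just as in the proof of Theorem~\ref{thm:isosceles}.

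\textbf{Step 1: the normalized $A$-coordinate.} On the median, a point with normalized barycentric coordinates $(p:q:q)$ has $AP = (1-p)\,h_A$, where $h_A = 2K/a$ is the altitude from $A$ (compare Theorem~\ref{thm:signedDistance}). Thus $X_m\prec X_n$ is equivalent to $p_m > p_n$, where $p_n$ is the normalized first coordinate of $X_n$ with $c=b$. This avoids square roots.
\begin{itemize}
\item For $X_2$ we have $p_2 = 1/3$.
\item For $X_{15}$, with first coordinate $a^2\bigl(\sqrt3\,S_A + S\bigr)$ where $S = 2K$, the substitution $c=b$ gives $S_A = (2b^2-a^2)/2$ and $S = \tfrac{a}{2}\sqrt{4b^2-a^2}$.
\item For $X_{17}$, use its ETC barycentrics (the $\sqrt3$ analogue of $X_{15}$/$X_{13}$-type formulas, involving $\csc(A+\pi/3)$-type factors) and simplify in the same way.
\end{itemize}

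\textbf{Step 2: normalize and reduce to one variable.} Set $a=1$ (or put $t=b/a>1$), so each $p_n$ becomes an algebraic function of $t$ involving $\sqrt3$ and $\sqrt{4t^2-1}$. A further substitution $\sqrt{4t^2-1}=u$, with $u>\sqrt3$, makes everything rational in $u$. The two claims become $p_2>p_{15}$ and $p_{15}>p_{17}$ for all $u>\sqrt3$.

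\textbf{Step 3: the inequalities.} Clear the positive denominators; their positivity follows from Theorem~\ref{thm:inside}, since the points are inside angle~$A$ and hence $u+v+w$ has the sign of each coordinate. Each claim then becomes a polynomial inequality in $u$ with coefficients in $\mathbb{Q}(\sqrt3)$, to be checked on $u>\sqrt3$. As in Theorem~\ref{thm:isosceles}, I would discharge these with \texttt{Simplify[d[2]<d[15] \&\& d[15]<d[17], isoConstraint]} in Mathematica, or with \texttt{Reduce} or \texttt{Resolve} over the reals if \texttt{Simplify} stalls because of the radicals.

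\textbf{Main obstacle.} The difficulty is the $X_{15}$ versus $X_{17}$ comparison, because both involve nested radicals and the gap may be small near the degenerate ends. As $t\to\infty$ the triangle flattens into a needle; as $t\to1^+$ it becomes equilateral and all centers coincide, so the inequalities hold only strictly in the limit. There I would factor the difference explicitly, expecting a factor $(t-1)$ or $(u-\sqrt3)$ times a polynomial that is visibly positive on $u>\sqrt3$, for instance after substituting $u=\sqrt3+s$ with $s>0$ and checking that all coefficients are positive.
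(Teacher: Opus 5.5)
Your proposal is correct and is essentially the paper's proof: both reduce the claim to an algebraic inequality along the median under $0<a<b=c$ and verify it symbolically. Your use of the normalized first coordinate instead of the squared distance $d_n$ is an equivalent reformulation, since $AX_n=(1-p_n)h_A$ with $1-p_n=2q_n>0$ inside angle~$A$. The hand route you sketch in fact closes at once, and the radicals you expect to be the main obstacle disappear. Put $a=1$ and $u=\sqrt{4b^2-1}$. Then $p_{15}=\frac{\sqrt{3}u^2+2u-\sqrt{3}}{u(u+\sqrt{3})^2}$, and $p_{17}=\frac{1+\sqrt{3}u}{u(u+3\sqrt{3})}$, using the trilinears $\csc(A+\pi/6)$ for $X_{17}$ (not $\csc(A+\pi/3)$, which is $X_{13}$). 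Hence $\frac13-p_{15}=\frac{(u-\sqrt{3})^2}{3u(u+\sqrt{3})}$ and $p_{15}-p_{17}=\frac{4(u^2-3)}{u(u+\sqrt{3})^2(u+3\sqrt{3})}$, both positive for $u>\sqrt{3}$.
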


\begin{figure}[h!t]
\centering
\includegraphics[width=0.25\linewidth]{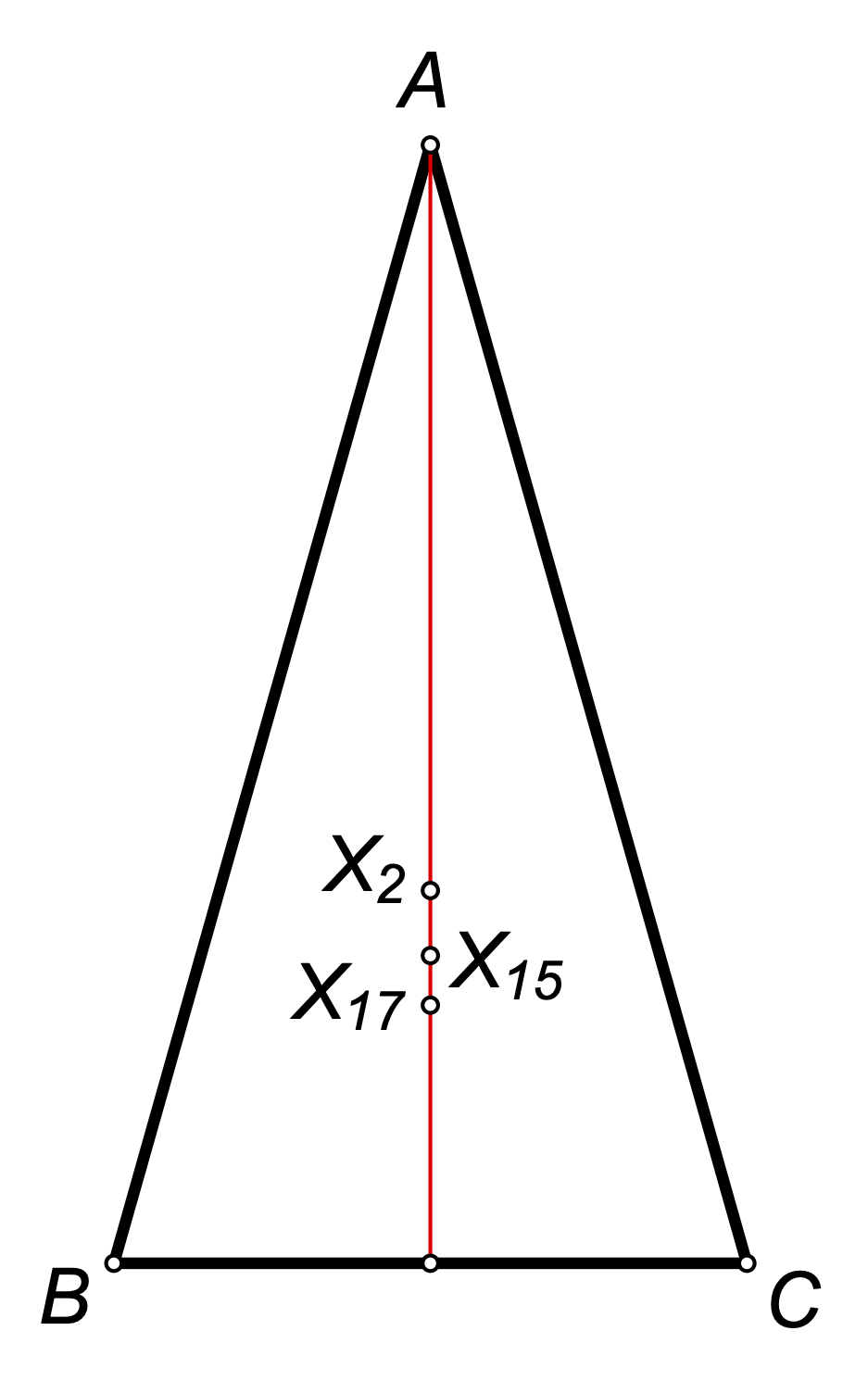}
\caption{Centers $X_2$, $X_{15}$, and $X_{17}$}
\label{fig:X2X15X17}
\end{figure}

\begin{proof}
The result can be proven using the Mathematica \texttt{Simplify} command
in the same way that it was used in the proof of Theorem~\ref{thm:isosceles}.
\end{proof}

\begin{theorem}
\label{thm:5/15}
Triangle centers $X_5$ and $X_{15}$ coincide in a 1--$k$--$k$ triangle,
where $k=\sqrt{2+\sqrt3}$.
\end{theorem}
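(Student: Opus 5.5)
Since $ABC$ is isosceles with base $BC$, both $X_5$ and $X_{15}$ lie on the median from $A$, as noted at the start of this section. So the two centers coincide exactly when their signed distances $d(X_5)$ and $d(X_{15})$ to $BC$ agree. One option is to take the ETC barycentrics and apply Theorem~\ref{thm:signedDistance}, comparing the normalized first coordinates $p/(p+q+r)$ at $b=c$. I find it cleaner to use Cartesian coordinates adapted to the symmetry, and to use the characterizations of the two centers directly.

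\textbf{Setup.} Put $B=(-\tfrac12,0)$, $C=(\tfrac12,0)$ and $A=(0,h)$, where $h^2=k^2-\tfrac14$, so that $a=1$ and $b=c=k$.

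\textbf{Step 1: locate $X_5$.} The orthocenter is $H=(0,\tfrac1{4h})$, obtained by intersecting the altitude from $A$ with the altitude from $B$. The circumcenter is $O=(0,\tfrac{h^2-1/4}{2h})$, the point on the axis equidistant from $A$ and $B$. The nine-point center is the midpoint of $OH$, so
$$y_5=\frac{h^2+\tfrac14}{4h}=\frac{k^2}{4h}.$$

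\textbf{Step 2: characterize $X_{15}$.} The first isodynamic point is the point $P$ with $a\cdot PA=b\cdot PB=c\cdot PC$; equivalently, it is the common point of the three Apollonius circles lying inside the triangle. With $a=1$ and $b=c=k$, the point $P=(0,y)$ on the axis must satisfy $PA=k\,PB$, that is,
$$(h-y)^2=k^2\left(\tfrac14+y^2\right).$$
This is a quadratic in $y$. One root is $X_{15}$ and the other is $X_{16}$, the inverse of $X_{15}$ in the circumcircle. The root belonging to $X_{15}$ is the one above $BC$, which is consistent with Theorems~\ref{thm:aboveOrder} and~\ref{thm:X15}, where $X_{16}$ lies below $BC$.

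\textbf{Step 3: verify.} I would substitute $y=y_5=k^2/(4h)$ into the quadratic and use $h^2=k^2-\tfrac14$. Clearing denominators should leave a polynomial identity in $k^2$. I expect it to reduce to $k^4-4k^2+1=0$, whose relevant root is $k^2=2+\sqrt3$. Then $k=\sqrt{2+\sqrt3}$ makes $X_5$ satisfy the defining condition of $X_{15}$, and since $y_5>0$, $X_5$ is the correct root, namely $X_{15}$ rather than $X_{16}$.

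The main obstacle is getting the correct characterization of $X_{15}$ and picking out the correct root of the quadratic. As a cross-check, I would also do the computation in the paper's style: compute $p/(p+q+r)$ at $b=c$ from the ETC barycentrics of $X_5$ and of $X_{15}$ (the latter involving $\sqrt3\,$ and the area $K$), and confirm with \texttt{Simplify} that they agree at $a=1$, $b=c=\sqrt{2+\sqrt3}$. The algebra itself is routine.
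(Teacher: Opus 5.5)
Your proof is correct, but it takes a different route from the paper. The paper substitutes $a=1$, $b=c=\sqrt{2+\sqrt3}$ into the ETC barycentrics of $X_5$ and $X_{15}$ and has Mathematica confirm that the two coordinate vectors are proportional. That is a pure verification, and no root has to be chosen, because the ETC coordinates single out $X_{15}$. Your metric characterization $a\cdot PA=b\cdot PB=c\cdot PC$ instead \emph{derives} $k$, by hand.

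Carrying out your Step~3 gives $h-y_5=(3k^2-1)/(4h)$ and $\tfrac14+y_5^2=(k^4+4k^2-1)/(16h^2)$. So the condition is $(3k^2-1)^2=k^2(k^4+4k^2-1)$, i.e.\ $(k^2-1)(k^4-4k^2+1)=0$; the extra factor $k^2-1$ is the equilateral case.

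The price of your route is the root selection, and that step is essential. The value $k^2=2-\sqrt3$ also gives a genuine triangle (angles $150^\circ,15^\circ,15^\circ$). In it $X_5$ is again above $BC$, but it coincides with $X_{16}$, not $X_{15}$. So ``the root above $BC$ is $X_{15}$'' holds here only because $k^2=2+\sqrt3$ yields the acute $30^\circ$--$75^\circ$--$75^\circ$ triangle with shortest side $BC$. That is exactly what licenses your appeal to Theorem~\ref{thm:aboveOrder}. In fact $h=k^2/2$, $y_5=\tfrac12$, and the other root is $-h$.

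Your Step~2 description of $X_{15}$ as the Apollonius intersection inside the triangle is not right in general: $X_{15}$ leaves the triangle once an angle exceeds $120^\circ$. The intrinsic criterion is that $X_{15}$ and $X_{16}$ are inverse in the circumcircle, with $X_{15}$ inside it. Using that instead would make your argument independent of the computer-proved Theorem~\ref{thm:aboveOrder}. Here $O=(0,\tfrac{\sqrt3}{2})$, $R=1$, and $ON=\tfrac{\sqrt3-1}{2}<1$, so $X_5$ is the root inside the circumcircle.

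Once the angles are known, there is an even shorter check. The trilinears of $X_5$ are $\cos(B-C):\cos(C-A):\cos(A-B)$, and those of $X_{15}$ are $\sin(A+60^\circ):\sin(B+60^\circ):\sin(C+60^\circ)$. In the $30^\circ$--$75^\circ$--$75^\circ$ triangle both reduce to $1:\tfrac{\sqrt2}{2}:\tfrac{\sqrt2}{2}$.
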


\begin{proof}
Two barycentric representations $p=(p_1:p_2:p_3)$ and $q=(q_1:q_2:q_3)$ represent the same point if
their coordinates are proportional. In Mathematica, this condition is \texttt{Cross[p,q]==\{0,0,0\}}.

Using this fact and the barycentric coordinates for $X_5$ and $X_{15}$ from \cite{ETC}, we find
that \texttt{Cross[X5,X15]==\{0,0,0\}} when $a=1$ and $k=\sqrt{2+\sqrt3}$.
\end{proof}

\begin{theorem}
Triangle centers $X_{12}$ and $X_{15}$ coincide in a 1--$k$--$k$ triangle,
where $k\approx 7.25054$ is the positive real root of $2 x^6-13 x^5-11 x^4+6 x^2+x-1=0$.
\end{theorem}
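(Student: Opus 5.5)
The plan mirrors the proof of Theorem~\ref{thm:5/15}: two barycentric triples represent the same point exactly when their cross product vanishes. We take the ETC barycentrics for $X_{12}$ and $X_{15}$ and specialize to $a=1$, $b=c=x$ with $x>1/2$. A scale-invariant center depends only on shape, so normalizing the base to $1$ loses nothing.

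Recall the coordinates. $X_{12}$ has barycentrics
$$\frac{a}{b+c-a}\bigl((a+b+c)(b+c-a)... \bigr)$$
in the $(b+c)^2/(b+c-a)$ form,
$$X_{12}=\Bigl(\frac{(b+c)^2}{b+c-a}:\cdots\Bigr).$$
$X_{15}$ has first coordinate $a\sin(A+\pi/3)$, equivalently $a^2\bigl(\sqrt3(b^2+c^2-a^2)+4K\bigr)$, with $16K^2=2a^2b^2+2b^2c^2+2c^2a^2-a^4-b^4-c^4$.

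With $b=c=x$ and $a=1$, both points lie on the axis of symmetry, and their second and third coordinates are equal. Hence \texttt{Cross[X12,X15]=\{0,0,0\}} reduces to a single scalar equation: the ratio (first coordinate)/(second coordinate) must agree for the two centers. Equivalently, the two normalized first coordinates $p/(p+q+r)$ agree. By Theorem~\ref{thm:signedDistance}, this says the two centers are at the same height above $BC$.

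For $X_{12}$, the first/second ratio is a rational function of $x$. For $X_{15}$, it involves $\sqrt3$ and $K=\tfrac14\sqrt{4x^2-1}$. So the coincidence condition has the shape $R(x)=S(x)+T(x)\sqrt3\sqrt{4x^2-1}$, with $R,S,T$ rational.

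The next step is to isolate the radical and square. This gives a polynomial equation in $x$, which we factor with Mathematica. The sextic $2x^6-13x^5-11x^4+6x^2+x-1$ should appear as a factor. We then locate its real roots numerically and select those with $x>1/2$.

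The main obstacle is that squaring introduces extraneous roots. These can come from other factors of the squared polynomial, or from other real roots of the sextic itself, for which the isolated radical has the wrong sign. For each candidate root with $x>1/2$ I would substitute back into the unsquared equation, or directly into \texttt{Cross[X12,X15]} with high-precision numerics. This confirms that $k\approx 7.25054$ satisfies the original equation and that no other admissible root does.

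The sign check at $k$ is also consistent with the numerical remark before Theorem~\ref{thm:X15}. There, $X_{15}$ is closer to $A$ than $X_{12}$ in the $1$--$4$--$4$ triangle and farther in the $1$--$8$--$8$ triangle. By continuity a crossing must occur in $(4,8)$, and $7.25054$ lies in that interval, which corroborates the root.
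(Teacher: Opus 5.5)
Your proposal is correct and is essentially the paper's proof. The paper hands \texttt{Cross[X12,X15]==\{0,0,0\}} with $b=c>a=1$ to \texttt{FindInstance}, and you carry out that elimination explicitly: the condition becomes $\sqrt3\sqrt{4x^2-1}\,(2x-1)(x+1)^2=8x^4+(2x-1)(x+1)^2$, and squaring gives $(2x+1)(x-1)(2x^6-13x^5-11x^4+6x^2+x-1)=0$.

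Both sides are positive for $x>1/2$, so squaring introduces no extraneous admissible roots. Note only that the factor $x-1$ is not spurious: it is the equilateral case, in which all centers coincide.
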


\begin{proof}
This example is obtained using the Mathematica command\\
\centerline{\texttt{FindInstance[Cross[X12,X15]==\{0,0,0\} \&\& b==c>a==1, \{a,b,c\}]}}\\
where \texttt{X12} and \texttt{X15} are the barycentric coordinates for centers $X_{12}$ and $X_{15}$,
respectively.
\end{proof}

\textbf{Center $X_{18}$.}

Figure~\ref{fig:X18} shows that $X_{18}$ is sometimes closer to $A$ than $X_{20}$
and sometimes further away.

\begin{figure}[h!t]
\centering
\includegraphics[width=0.5\linewidth]{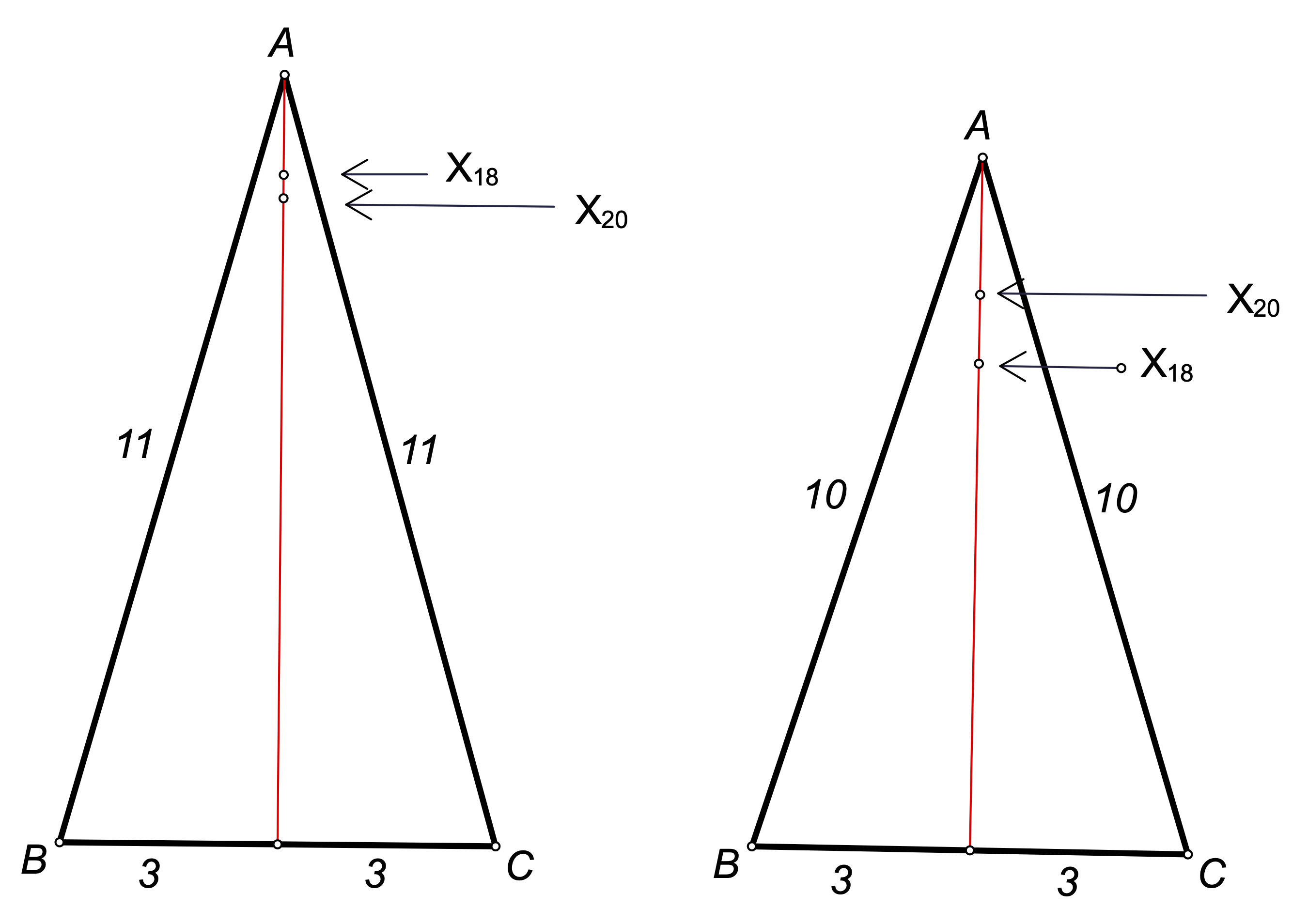}
\caption{Centers $X_{18}$ and $X_{20}$}
\label{fig:X18}
\end{figure}

Figure~\ref{fig:X18X26} shows that $X_{18}$ and $X_{26}$
can lie outside angle $A$.
Sometimes $X_{18}$ is closer to $A$ than $X_{26}$ and sometimes it is further away.

\begin{figure}[h!t]
\centering
\includegraphics[width=0.3\linewidth]{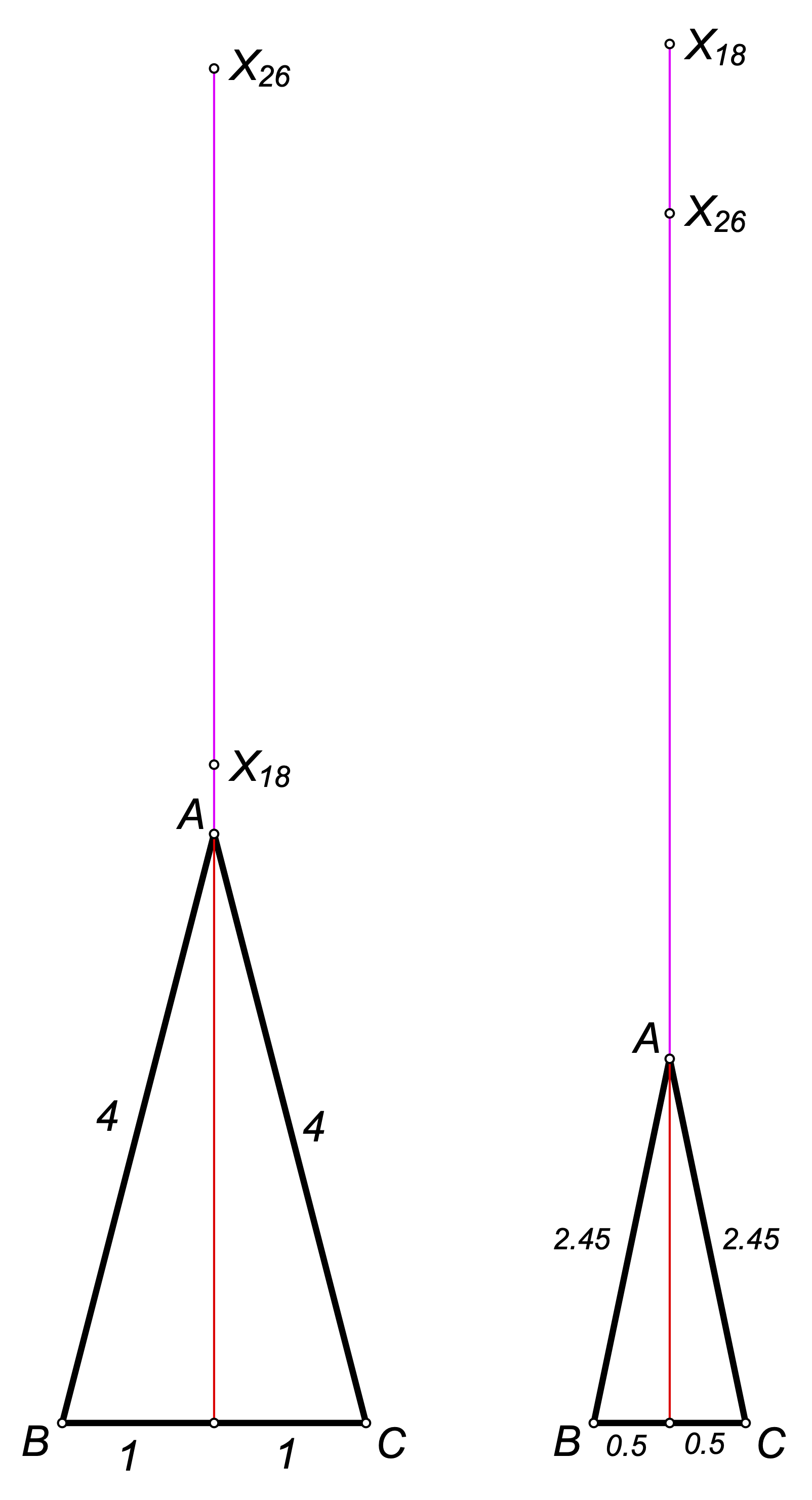}
\caption{Centers $X_{18}$ and $X_{26}$}
\label{fig:X18X26}
\end{figure}

\goodbreak
\textbf{Center $X_{24}$.}

Figure~\ref{fig:X11X24} shows that $X_{24}$ is sometimes closer to $A$ than $X_{11}$
and sometimes further away.

\begin{figure}[h!t]
\centering
\includegraphics[width=0.4\linewidth]{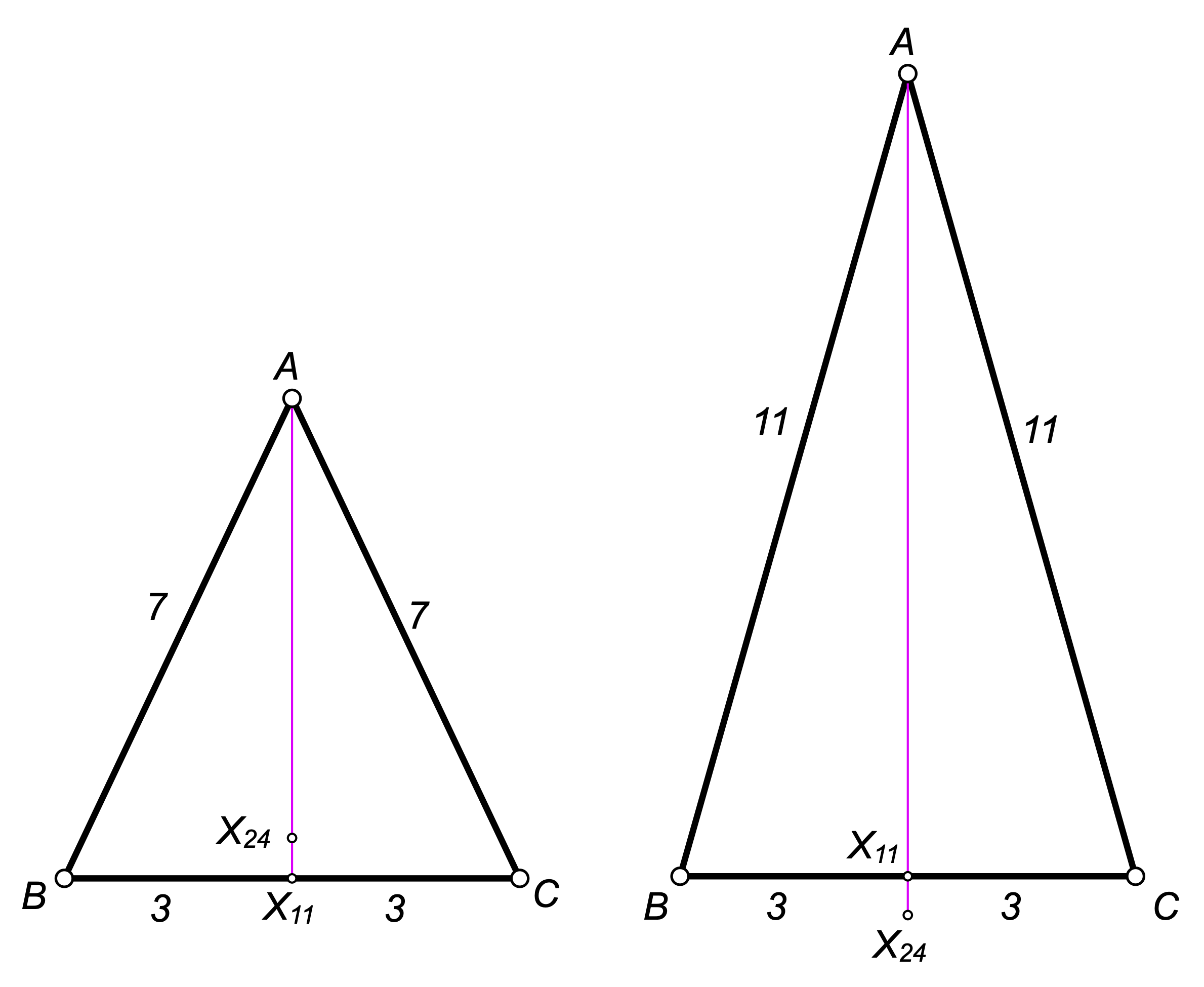}
\caption{Centers $X_{11}$ and $X_{24}$}
\label{fig:X11X24}
\end{figure}

\begin{theorem}
Triangle centers $X_{11}$ and $X_{24}$ coincide in a $k$--1--1 triangle,
where $k=\sqrt{2-\sqrt2}$.
\end{theorem}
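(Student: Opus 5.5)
Since both centers lie on the median from $A$ in an isosceles triangle, I would reduce the statement to finding the one shape of $k$--$1$--$1$ triangle in which they are at the same position on that median. Normalize $b=c=1$ and $a=k$, which is harmless because barycentrics are homogeneous. The plan is to locate $X_{11}$ exactly first, and then determine when $X_{24}$ lands at that point.

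\emph{Step 1: locate $X_{11}$.} The Feuerbach point has barycentrics
\[
X_{11}=\bigl((b-c)^2(b+c-a):(c-a)^2(c+a-b):(a-b)^2(a+b-c)\bigr).
\]
Putting $b=c$ makes the first coordinate vanish. The other two coordinates are both $(1-k)^2k$, so $X_{11}=(0:1:1)$ for $k\neq 1$, which is the midpoint of $BC$. This matches the geometry: in an isosceles triangle the incircle and the nine-point circle are tangent at the midpoint of the base. The equilateral case $k=1$, where the Feuerbach point is undefined, is excluded.

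\emph{Step 2: find when $X_{24}$ is that midpoint.} Because $X_{24}$ also lies on the median, it coincides with $X_{11}$ exactly when it lies on $BC$. By Theorem~\ref{thm:signedDistance} this means the first coordinate of $X_{24}$ is zero (with nonzero coordinate sum). So I would take the ETC barycentrics of $X_{24}$, set $b=c=1$, $a=k$, and factor the first coordinate as a polynomial in $k$. I expect a factor equivalent to $\cos 2A=0$. In trilinear form $X_{24}$ carries a factor of this kind, and the coordinate should reduce to something proportional to $a^2\,(b^2+c^2-a^2)^2-2b^2c^2$-type terms. With $b=c=1$ this factor becomes $k^4-4k^2+2=0$. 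Equivalently $A=45^\circ$, and then $k^2=2-2\cos 45^\circ=2-\sqrt2$.

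\emph{Step 3: check the root.} I would confirm that $k=\sqrt{2-\sqrt2}\approx 0.765$ satisfies the triangle inequality, which it does since $0<k<2$. I would also check that at this $k$ the other two coordinates of $X_{24}$ are equal and nonzero, so the point is genuinely $(0:1:1)$ and not degenerate. The other root $k^2=2+\sqrt2$ gives $k>2$ when combined with $b=c=1$, so it does not give a triangle. As a final independent check, I would run \texttt{Simplify[Cross[X11,X24]=={0,0,0}]} at this value, in the same style as the proof of Theorem~\ref{thm:5/15}.

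The main obstacle is Step 2. It requires correctly transcribing the rather messy ETC coordinate for $X_{24}$ and verifying that, after the substitution, its first coordinate factors to isolate $\cos 2A$. I would let Mathematica do the factoring and then interpret the surviving factor geometrically as above.
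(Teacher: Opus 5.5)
Your argument is correct for the statement as posed, and it takes a more conceptual route than the paper. The paper substitutes $a=k$, $b=c=1$ into the ETC coordinates and has Mathematica confirm that \texttt{Cross[X11,X24]} vanishes, exactly as in Theorem~\ref{thm:5/15}. That confirms the given value of $k$ but does not explain where it comes from. You instead combine the symmetry of the isosceles triangle with Theorem~\ref{thm:X11} ($X_{11}$ is the midpoint of the base), which reduces coincidence to ``$X_{24}$ lies on $BC$''. You then read this condition off the first barycentric coordinate. Your expectation in Step 2 is right. The ETC coordinate is $a^2(a^2+b^2-c^2)(a^2-b^2+c^2)(a^4-2a^2b^2+b^4-2a^2c^2+c^4)$, and its last factor equals $(b^2+c^2-a^2)^2-2b^2c^2=2b^2c^2\cos 2A$, which becomes $k^4-4k^2+2$ at $b=c=1$. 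So your route derives $k$ from an apex angle of $45^\circ$ rather than merely verifying it, and it can be checked by hand.

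One claim in Steps 2--3 is false, however. The equation $\cos 2A=0$ also gives $A=135^\circ$. Moreover $\sqrt{2+\sqrt2}\approx 1.848<2$, so the $\sqrt{2+\sqrt2}$--$1$--$1$ triangle does exist. With $b=c=1$ and $a=k$ you get
\[
X_{24}=\bigl(k^6(k^4-4k^2+2) : -k^4(2-k^2)^2 : -k^4(2-k^2)^2\bigr).
\]
At $k^2=2+\sqrt2$ this is $(0:-2k^4:-2k^4)=(0:1:1)$, so $X_{11}$ and $X_{24}$ coincide in this obtuse triangle as well. This does not damage the theorem, which only asserts coincidence at $k=\sqrt{2-\sqrt2}$. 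But your opening phrase ``the one shape'' and your dismissal of the second root are both wrong. Uniqueness holds only among tall isosceles triangles ($k<1$), which is the setting of that section of the paper. Either drop the uniqueness remark or state it with that restriction.
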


\begin{proof}
The proof is the same as the proof of Theorem~\ref{thm:5/15}.
\end{proof}

We can, however, bound the location of $X_{24}$ as the following result shows.

\begin{theorem}
\label{thm:X24}
Let $ABC$ be a tall isosceles triangle with $b=c$.
Then
$$X_{25}\prec  X_{24} \prec  X_{14}.$$
\end{theorem}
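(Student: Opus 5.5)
We need $X_{25}\prec X_{24}\prec X_{14}$ in every tall isosceles triangle with base $BC$ ($b=c>a$). The plan follows the proof of Theorem~\ref{thm:isosceles}: reduce each comparison to an inequality between squared distances from $A$, and check that inequality symbolically under the isosceles constraint.

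\textbf{Reduction to distances.} In an isosceles triangle every center lies on the median from $A$, so two centers can be compared by their distances to $A$. By Theorem~\ref{thm:inside}, neither $X_{24}$ nor $X_{25}$ lies in the exceptional set $\{18,26,30,59,68,70,87,90,91,93,96,99,100\}$. Hence both lie inside angle $A$ whenever $b>a$.

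\textbf{The point $X_{14}$.} This center is also not in the exceptional set, so Theorem~\ref{thm:inside} already places it inside angle $A$. Still, the signed position along the median is the safer quantity to work with. I would use Theorem~\ref{thm:signedDistance}: on the median, the distance from $A$ equals $h-d(P)$, where $h=2K/a$ is the altitude. So $P\prec Q$ is equivalent to $d(P)>d(Q)$, that is,
$$\frac{p_P}{p_P+q_P+r_P}>\frac{p_Q}{p_Q+q_Q+r_Q},$$
where $(p_P:q_P:r_P)$ are the coordinates of $P$ from \cite{ETC}. This avoids square roots. It also stays valid if a point dips below $BC$, so I do not need to worry about $X_{14}$ or the others crossing $BC$ for some shapes.

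\textbf{Computation.} I would substitute $c=b$ into the ETC barycentrics of $X_{14}$, $X_{24}$, $X_{25}$ and form the normalized first coordinate of each. Normalizing $a=1$ and letting $b>1$ leaves a one-variable rational inequality in $b$. Then I would run \texttt{Simplify[...,isoConstraint]} on each of the two inequalities, or clear the denominators with their signs tracked and show the resulting polynomial in $b$ has no root in $(1,\infty)$.

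\textbf{Main obstacle.} The hard part is $X_{14}$, whose ETC coordinates involve $\sqrt3$ and the area, e.g.\ $\sqrt3\,a^4-2(b^2-c^2)^2+a^2(b^2+c^2+4\sqrt3K)$ up to sign conventions. With $c=b$ and $K=\tfrac{a}{4}\sqrt{4b^2-a^2}$, the comparison with $X_{24}$ contains a radical. If \texttt{Simplify} stalls on it, I would first try the substitution $b=\tfrac12\sqrt{1+t^2}$ with $a=1$, so that $\sqrt{4b^2-1}=t$; the condition $b>1$ becomes $t>\sqrt3$. This makes everything rational in $t$ and $\sqrt3$. Failing that, I would isolate the radical, check signs, square, and verify the resulting polynomial inequality with \texttt{Reduce}.

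By contrast, $X_{24}$ and $X_{25}$ have purely polynomial coordinates, so the first inequality should go through directly, as in Theorem~\ref{thm:isosceles}.
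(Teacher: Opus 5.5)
Your plan is correct and is essentially the paper's proof. The paper likewise sets $c=b$ and has \texttt{Simplify} verify the two inequalities under the tall-isosceles constraint. The only difference is that it compares squared distances from $A$, while you compare signed distances to $BC$. That is legitimate because Theorem~\ref{thm:inside} puts all three centers on the open ray from $A$ through the midpoint of $BC$, and it gives lower-degree expressions.

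One small slip: the ETC barycentric for $X_{14}$ is $a^4-2(b^2-c^2)^2+a^2(b^2+c^2-4\sqrt3K)$, with no factor $\sqrt3$ on $a^4$; the $+$ sign gives $X_{13}$. With $a=1$ and $b=\tfrac12\sqrt{1+t^2}$, its normalized first coordinate is exactly $-1/(\sqrt3\,t)$, so your substitution removes the radical completely.
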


\begin{proof}
The proof is the same as the proof of Theorem~\ref{thm:X15}.
\end{proof}

\textbf{Center $X_{26}$.}

Figure~\ref{fig:X18X26} shows that center $X_{26}$ can sometimes lie outside angle $A$.

If $X_{26}$ lies inside angle $A$, then it lies between $A$ and $X_{20}$ (Figure~\ref{fig:X20X26}).

\begin{figure}[h!t]
\centering
\includegraphics[width=0.35\linewidth]{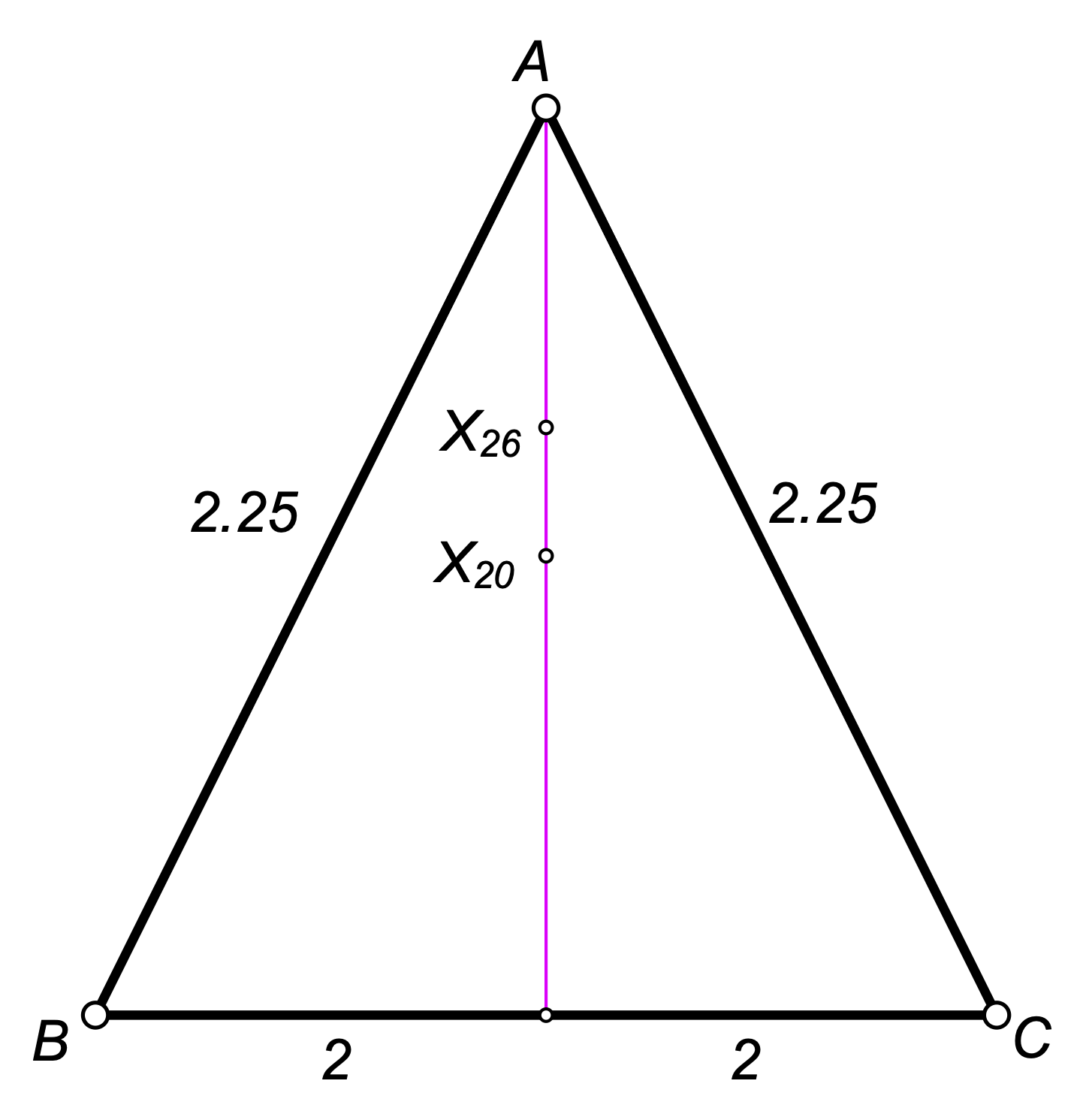}
\caption{Centers $X_{20}$ and $X_{26}$}
\label{fig:X20X26}
\end{figure}

\begin{theorem}
Triangle center $X_{26}$ coincides with vertex $A$ in a $k$--1--1 triangle,
where $k=\sqrt{2-\sqrt2}$.
\end{theorem}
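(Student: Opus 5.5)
The plan is to follow the method of Theorem~\ref{thm:5/15}. Take the barycentric coordinates $(u:v:w)$ of $X_{26}$ from \cite{ETC}. Then $X_{26}$ coincides with $A=(1:0:0)$ exactly when \texttt{Cross[X26,\{1,0,0\}]==\{0,0,0\}}. That cross product is $(0,w,-v)$, so the condition is $v=w=0$ together with $u\neq 0$ (the point must be a genuine point, not the zero vector).

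First I specialize to the isosceles case $b=c=1$, $a=k$. Because $X_{26}$ is a triangle center, its coordinate functions are symmetric under swapping $b$ and $c$ (with the coordinates swapped). Hence when $b=c$ we get $v=w$ identically, and the two conditions collapse to the single equation $v(k,1,1)=0$. The second coordinate is a polynomial in $a^2,b^2,c^2$, so after substituting $b=c=1$ it becomes a polynomial in $t=k^2$. I will factor it, most conveniently with Mathematica's \texttt{Factor} or \texttt{Solve} applied to \texttt{X26[[2]]/.\{b->1,c->1\}}. I then expect to find the factor $t^2-4t+2$, whose roots are $t=2\pm\sqrt2$, along with spurious factors such as powers of $t$ or $(t-4)$ that correspond to degenerate triangles.

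Next I check the admissible roots. The triangle inequality for a $k$--1--1 triangle requires $0<k<2$, that is, $0<t<4$. Both roots $2\pm\sqrt2$ lie in this range, but only $t=2-\sqrt2$ should survive in the unfactored system: at $t=2+\sqrt2$ I expect either $u$ also to vanish or the factor not to be present. For the surviving root I will verify that $u\neq 0$. This gives $k=\sqrt{2-\sqrt2}$, matching the statement.

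As an independent sanity check I will use geometry. In a $k$--1--1 triangle we have $k=2\sin(A/2)$, so $k^2=2-2\cos A$, and $k^2=2-\sqrt2$ means $\angle A=45\degrees$. I will confirm directly that for apex angle $45\degrees$ the circumcenter of the tangential triangle lands at $A$. Here $X_{26}$ lies on the Euler line at the signed position $OX_{26}=R/(2\cos A\cos B\cos C)$ in the direction determined by the sign of the product of cosines, and I will compare this with $OA=R$ using $B=C=67.5\degrees$.

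I expect the main obstacle to be the factoring step. The ETC coordinate for $X_{26}$ has fairly high degree, and I must make sure that extraneous factors, such as those vanishing when $a^2=b^2+c^2$ or for degenerate triangles, are not mistaken for genuine solutions. I also must not lose the root when dividing out common factors.
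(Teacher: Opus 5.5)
Your core argument is correct and is the paper's own method. The paper only checks that \texttt{Cross[X26,\{1,0,0\}]} vanishes at the stated side lengths; your factoring additionally shows where $k$ comes from. With the ETC coordinates at $b=c=1$, $a^2=t$ one gets, up to a common constant, $v=w=-t^2(t^2-4t+2)$ and $u=t^2(t^3-4t^2+4)$. Using $t^2=4t-2$, the latter becomes $u=2t^2(2-t)\neq 0$. The theorem is a pure existence statement, so this already completes the proof.

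Two of your side predictions are false, however, and should not go into the write-up.

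First, the root $t=2+\sqrt2$ is not spurious. The same formula gives $u=2t^2(2-t)\neq 0$ and $v=w=0$ there too, so $X_{26}$ also coincides with $A$ in the obtuse $\sqrt{2+\sqrt2}$--1--1 triangle, with apex angle $135^\circ$. Geometrically, the tangential triangle is then right-angled at $T_A$, its hypotenuse $T_BT_C$ lies on the tangent at $A$, and $A$ is its midpoint. The step ``only $t=2-\sqrt2$ should survive'' would fail. Uniqueness holds only if you restrict to acute (or tall) triangles, and the theorem does not claim it.

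Second, your Euler-line formula is wrong; it does not even give $X_{26}=O$ for an equilateral triangle. For acute $ABC$ the circumcircle is the incircle of the tangential triangle, whose circumradius is $R_T=R/(4\cos A\cos B\cos C)$. Euler's formula $d^2=R_T(R_T-2R)$, together with $OH^2=R^2(1-8\cos A\cos B\cos C)$, gives $OX_{26}=OH/(4\cos A\cos B\cos C)$, with $X_{26}$ on the opposite side of $O$ from $H$. At $A=45^\circ$, $B=C=67.5^\circ$ this equals $(\sqrt2-1)R/(\sqrt2-1)=R$, as it should. Your expression $R/(2\cos A\cos B\cos C)=2(\sqrt2+1)R\approx 4.83R$ would make a correct result look wrong.

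A simpler synthetic check: $A$ is the midpoint of $T_BT_C$, so $X_{26}=A$ if and only if $AT_A=AT_B$. This reads $R(1+\sec A)=R\cot(A/2)$, which reduces to $\sin A=\cos A$, i.e.\ $A=45^\circ$.
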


\begin{proof}
The proof is the same as the proof of Theorem~\ref{thm:5/15}.
\end{proof}

\textbf{Center $X_{29}$.}

Center $X_{29}$ is closer to $A$ than $X_{6}$ in a 1--2--2 triangle and further from $A$ in a 7--8--8 triangle.
We can, however, bound the location of $X_{29}$ as the following result shows.

\begin{theorem}
\label{thm:X29}
Let $ABC$ be a tall isosceles triangle with $b=c$ .
Then
$$X_{7}\prec  X_{29} \prec  X_{4}.$$
\end{theorem}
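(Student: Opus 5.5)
In a tall isosceles triangle with $b=c>a$ all three centers lie on the median from $A$, and by Theorem~\ref{thm:inside} each of $X_7$, $X_{29}$, $X_4$ lies inside angle $A$ (none of $7,29,4$ is in the exceptional set). So I will follow the proof of Theorem~\ref{thm:isosceles}. Let $d_n$ be the squared distance from $A$ to $X_n$. Then it suffices to prove $d_7<d_{29}$ and $d_{29}<d_4$ under \texttt{isoConstraint}.

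First I will specialize the barycentric coordinates from \cite{ETC} to $c=b$. It is simpler to work with the normalized first coordinate than with the distance formula. By Theorem~\ref{thm:signedDistance}, the signed distance of $X_n$ from $BC$ is $(2K/a)\,p_n/(p_n+q_n+r_n)$. Every point on the median inside angle $A$ is ordered by its distance from $A$ in the reverse order of its height above $BC$. Hence it is enough to compare
$$h_n=\frac{p_n}{p_n+q_n+r_n}$$
for $n=7,29,4$, and the claim becomes $h_4<h_{29}<h_7$.

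The individual values are easy:
\begin{itemize}
\item $X_7=(1/(b+c-a):\cdots)$ gives $h_7=(2b-a)/(2b+a)$ when $c=b$.
\item $X_4$ gives $h_4=a^2/(2b^2)$.
\item $X_{29}$ has first coordinate $\frac{a}{b+c}\cdot\frac{1}{\cos B+\cos C}$, up to the standard form; that is, it is $a(b+c)/\bigl((b+c-a)\cdot(\text{something quadratic})\bigr)$. After setting $c=b$ this becomes a rational function in $a,b$.
\end{itemize}

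Each inequality then reduces, after clearing denominators whose signs are fixed on $0<a<2b$ with $b>a$ (factors like $2b\pm a$, $b+c-a$, and $b^2-a^2$-type terms), to a polynomial inequality in $t=a/b\in(0,1)$. I expect it to factor as a product of obviously positive factors times something like $(1-t)$ or $(2-t)$. I would confirm each step with \texttt{Simplify[...,isoConstraint]}, exactly as in Theorem~\ref{thm:isosceles}.

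The main obstacle is $X_{29}$: its ETC coordinates are the least symmetric of the three, and the denominator of $h_{29}$ could in principle vanish or change sign. The second-coordinate factor of $X_{29}$ is $1/(\cos C+\cos A)$, which is positive. So I would first check that $p_{29}+q_{29}+r_{29}>0$ for $t\in(0,1)$, and only then compare numerators. If \texttt{Simplify} stalls, I would substitute $a=tb$, clear denominators, and use \texttt{Reduce} on $0<t<1$ to show that the resulting one-variable polynomial has no roots there. Its sign can then be fixed at a single sample point, such as the 1--2--2 triangle.
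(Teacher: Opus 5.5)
\textbf{Strategy.} Your reduction is valid, and it is a slightly cleaner variant of the paper's proof. The paper applies \texttt{Simplify} to the squared distances, checking $d_7<d_{29}$ and $d_{29}<d_4$. You instead use the fact that on the ray from $A$ through the midpoint of $BC$, the distance to $A$ is $(1-h)\cdot 2K/a$, so comparing $h_n$ suffices.

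\textbf{The gap.} The explicit data you write down are wrong, and with them the hand computation would not go through.
\begin{itemize}
\item For $c=b$ we have $X_7=(a:2b-a:2b-a)$, so $h_7=a/(4b-a)$. Your $(2b-a)/(2b+a)$ is $h_8$, the Nagel point, so that comparison would only prove $X_8\prec X_{29}$.
\item Similarly $X_4=(a^2:2b^2-a^2:2b^2-a^2)$ gives $h_4=a^2/(4b^2-a^2)$, not $a^2/(2b^2)$. With your value, $h_4<h_{29}$ is actually false near $a=b$: for $a/b=0.9$ your $h_4=0.405$, while $h_{29}\approx 0.283$.
\item Your description of $X_{29}$ has the factors upside down. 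As the cevapoint of $X_1$ and $X_4$, it has barycentrics $\bigl(\frac{b+c-a}{(b+c)(b^2+c^2-a^2)}:\cdots\bigr)$, i.e.\ trilinears $\sec A/(\cos B+\cos C)$. For $c=b$ this is $\bigl(a(a+b)(2b-a):2b(2b^2-a^2):2b(2b^2-a^2)\bigr)$.
\end{itemize}

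\textbf{The fix.} With the correct data, your approach closes by hand. Let $t=a/b\in(0,1)$, $N=t(1+t)(2-t)$ and $D=N+4(2-t^2)$. Then $h_{29}=N/D$, $h_7=t/(4-t)$ and $h_4=t^2/(4-t^2)$, and
\[
h_7-h_{29}=\frac{2t^3(1-t)}{(4-t)D},\qquad
h_{29}-h_4=\frac{2t(1-t)(2+t)(2-t^2)}{(4-t^2)D}.
\]
Every factor is positive for $0<t<1$. In particular $D>0$: a tall isosceles triangle has $A<60^\circ$ and acute base angles, so all coordinates of $X_{29}$ are positive, and no $b^2-a^2$-type sign issues arise. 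This gives $h_4<h_{29}<h_7$, i.e.\ $X_7\prec X_{29}\prec X_4$, with equality exactly at $t=1$, as your expected $(1-t)$ factor predicted.

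As a cross-check, setting $h_{29}=h_6=t^2/(t^2+2)$ yields $(t-1)(2t^2+t-2)=0$. This recovers the paper's coincidence $X_{29}=X_6$ at $1/t=(1+\sqrt{17})/4$. If you instead feed the ETC coordinates directly into \texttt{Simplify}, as you also propose, these slips disappear and you are doing exactly what the paper does.
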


\begin{proof}
The proof is the same as the proof of Theorem~\ref{thm:X15}.
\end{proof}

\begin{theorem}
Triangle centers $X_{29}$ and $X_{6}$ coincide in a 1--$k$--$k$ triangle,
where $k=(1+\sqrt{17})/4$.
\end{theorem}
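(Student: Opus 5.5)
The plan is to argue exactly as in the proof of Theorem~\ref{thm:5/15}: two centers coincide precisely when their barycentric coordinate triples are proportional, that is, when \texttt{Cross[X29,X6]==\{0,0,0\}}. Since triangle centers are homogeneous of some degree in $(a,b,c)$, we may normalize the base to $a=1$ and impose $b=c=k$. We then need to show that, on the range $k>1/2$ (the triangle inequality for a 1--$k$--$k$ triangle), the proportionality condition holds exactly when $k=(1+\sqrt{17})/4$.

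To make the computation transparent, I would first exploit the symmetry. In an isosceles triangle with $b=c$, every center lies on the median from $A$. So after putting $c=b$, each center's coordinates have equal second and third entries, and can be rescaled to the form $(f(a,b):1:1)$.

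\begin{itemize}
\item For $X_6=(a^2:b^2:c^2)$ this gives $f_6=a^2/b^2$.
\item For $X_{29}$, I would take its barycentrics from \cite{ETC}, substitute $c=b$, and cancel the common factor of the last two coordinates to obtain a rational function $f_{29}(a,b)$.
\end{itemize}

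With these forms, the cross-product condition collapses to the single equation $f_{29}(1,k)=f_6(1,k)=1/k^2$. Clearing denominators turns this into a polynomial equation $P(k)=0$.

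The expected outcome is that $P$ factors as $(2k^2-k-2)\,Q(k)$. Indeed, $k=(1+\sqrt{17})/4$ is precisely the positive root of $2k^2-k-2=0$, since $4k-1=\sqrt{17}$ gives $16k^2-8k+1=17$. I would have Mathematica carry out \texttt{Factor} on the cleared numerator to exhibit this factor, and then \texttt{Simplify} or \texttt{Reduce} to confirm that the cofactor $Q(k)$ has no root with $k>1/2$. Substituting $k=(1+\sqrt{17})/4$ back into \texttt{Cross[X29,X6]} and simplifying to \texttt{\{0,0,0\}} then gives a direct certificate of coincidence. Equivalently, the one-line command \texttt{Reduce[Cross[X29,X6]==\{0,0,0\} \&\& b==c>a==1, \{a,b,c\}]} should return exactly this value of $b$.

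The main obstacle is bookkeeping with the coordinates of $X_{29}$, which are fairly bulky in terms of $a,b,c$. After setting $b=c$, spurious common factors such as $b+c-a$ or powers of $a$ must be cancelled before we can read off the genuine condition; otherwise extraneous roots, or zero denominators, could appear. I would also check that no denominator of $X_{29}$ vanishes at the claimed $k$, so that the point is well defined there and the coincidence is genuine rather than a degenerate $0$ triple. This is consistent with Theorem~\ref{thm:X29}, where $X_{29}$ is finite and lies between $X_7$ and $X_4$ for all tall triangles. As a final sanity check, the value $k\approx1.28$ should sit between the 1--2--2 and 7--8--8 examples quoted before the statement, where the relative order of $X_{29}$ and $X_6$ switches.
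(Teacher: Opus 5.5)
Your route is the paper's: the paper proves this exactly as in Theorem~\ref{thm:5/15}, by checking that \texttt{Cross[X29,X6]} vanishes for $a=1$, $b=c=k$. Your final substitution certificate is precisely that check, so the statement itself is established.

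One claim in your plan is false, however, and the step meant to support it would fail. You cannot show that the cofactor $Q(k)$ has no root with $k>1/2$, because $k=1$ is always a root. In the equilateral triangle every center is the centroid, so $f_n(1,1)=1$ for every $n$ and hence $P(1)=0$. Since $2k^2-k-2$ equals $-1$ at $k=1$, the factor $k-1$ must land in $Q$.

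Concretely, take $X_{29}$ with trilinears $\sec A/(\cos B+\cos C)$, i.e.\ barycentrics proportional to $(b+c-a)/\bigl((b+c)(b^2+c^2-a^2)\bigr)$. Then $f_{29}(1,k)=(2k-1)(k+1)/\bigl(2k(2k^2-1)\bigr)$, and the equation $f_{29}=1/k^2$ reduces to $2k^3-3k^2-k+2=(k-1)(2k^2-k-2)=0$. So $Q(k)=k-1$ up to a constant.

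On the range $k>1/2$, the two centers therefore coincide exactly for $k\in\{1,(1+\sqrt{17})/4\}$. Uniqueness holds only among tall triangles ($k>1$), which is why your \texttt{Reduce} with the strict constraint \texttt{b==c>a==1} returns a single value. Since the theorem asserts only existence, drop the uniqueness claim or restrict it to $k\neq 1$.
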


\begin{proof}
The proof is the same as the proof of Theorem~\ref{thm:5/15}.
\end{proof}

\textbf{Center $X_{30}$.}

\begin{theorem}
Center $X_{30}$ lies on the line at infinity.
\end{theorem}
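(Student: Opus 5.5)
We have to show that X_{30} lies on the line at infinity. In barycentric coordinates, a point $(u:v:w)$ lies on the line at infinity exactly when $u+v+w=0$. The plan is therefore to take the barycentric coordinates of $X_{30}$ from \cite{ETC} and check that they sum to zero identically in $a,b,c$.

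First we need the coordinates. $X_{30}$ is the point at infinity of the Euler line. Its first barycentric coordinate is
$$u = 2a^4-(b^2-c^2)^2-a^2(b^2+c^2),$$
and $v$ and $w$ come from cyclically permuting $a\to b\to c\to a$. Alternatively, the same coordinates can be derived as the difference of normalized coordinates of two points on the Euler line, say $X_3$ and $X_4$. A difference of two normalized coordinate vectors automatically has coordinate sum $0$, so this route proves the claim directly as well.

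Next we add the three coordinates. In $u+v+w$:
\begin{itemize}
\item The quartic terms $2(a^4+b^4+c^4)$ cancel against $-\sum(b^2-c^2)^2 = -2\sum a^4 + 2\sum b^2c^2$, leaving $+2\sum b^2c^2$.
\item The mixed terms $-\sum a^2(b^2+c^2) = -2\sum b^2c^2$ cancel what remains.
\end{itemize}
So $u+v+w=0$ identically. In Mathematica this is simply \texttt{Simplify[Total[X30]]}, which returns $0$.

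The main point to watch is not the algebra, which is trivial, but making sure the coordinates are not all zero. That way $(u:v:w)$ is a genuine point, and the statement is not vacuous. For a non-equilateral triangle the coordinates do not all vanish, since the Euler line is then well defined. In the equilateral case, $u=v=w=0$ and $X_{30}$ is undefined. For this reason, the earlier proof of Theorem~\ref{thm:above} excluded $X_{30}$ by noting $u+v+w\neq0$ only for finite points.
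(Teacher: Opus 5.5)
Your proof is correct and follows the paper's argument: take the ETC coordinates of $X_{30}$ and verify that $u+v+w$ vanishes identically, which your cancellation of the quartic and mixed terms does. Your extra nondegeneracy remark is also right, though the paper leaves it unstated: since $u-v=3(b^2-a^2)(c^2-a^2-b^2)$ and cyclically, the three coordinates vanish simultaneously only for an equilateral triangle, consistent with $(u:v:w)$ being $16K^2$ times the difference of the normalized coordinates of $X_4$ and $X_3$.
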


\begin{proof}
From \cite{ETC}, we have
$$X_{30}=\Bigl(2 a^4-a^2 \left(b^2+c^2\right)-\left(b^2-c^2\right)^2:$$
$$\qquad 2 b^4-b^2 \left(c^2+a^2\right)-\left(c^2-a^2\right)^2:$$
$$\qquad 2 c^4-c^2 \left(a^2+b^2\right)-\left(a^2-b^2\right)^2\Bigr).$$
Algebraically, the sum of the coefficients is 0 which means that the point lies on the line at infinity.
\end{proof}

\bigskip
\textbf{Center $X_{11}$.}

\begin{theorem}
\label{thm:X11}
The Feuerbach point, $X_{11}$, of an isosceles triangle coincides
with the midpoint of the base.
\end{theorem}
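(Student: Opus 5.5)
Take the barycentric coordinates of the Feuerbach point from \cite{ETC},
$$X_{11}=\bigl((b+c-a)(b-c)^2 : (c+a-b)(c-a)^2 : (a+b-c)(a-b)^2\bigr),$$
and specialize to the isosceles case $b=c$. The first coordinate contains the factor $(b-c)^2$, so it becomes $0$. The other two coordinates both become $a(a-b)^2$. Thus in an isosceles triangle with base $BC$,
$$X_{11}=\bigl(0 : a(a-b)^2 : a(a-b)^2\bigr).$$
The midpoint of $BC$ has coordinates $(0:1:1)$. So, in the spirit of the proof of Theorem~\ref{thm:5/15}, it suffices to check that these two triples are proportional. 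Equivalently, \texttt{Cross[X11,\{0,1,1\}]} simplifies to $\{0,0,0\}$ under the constraint $b=c$. This is immediate, since the second and third coordinates of $X_{11}$ are equal and the first is zero.

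The one step needing care is the degenerate case $a=b=c$ (equilateral). There all three coordinates of $X_{11}$ vanish, and the formula does not define a point. In this case I would argue geometrically instead: the incircle and the nine-point circle coincide, so the Feuerbach point is undefined or arbitrary. I would therefore state that the theorem holds for non-equilateral isosceles triangles. As a check, the coordinate sum $2a(a-b)^2$ is nonzero when $a\neq b$, so the point is finite.

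A synthetic confirmation is also possible. By symmetry, the incircle and the nine-point circle both have centers on the axis of symmetry. Both circles pass through the midpoint of $BC$: the incircle touches $BC$ at its midpoint, and the nine-point circle passes through the midpoints of the sides. Both centers lie on the same side of $BC$, and the two circles are tangent. Their common point therefore lies on the axis, and it must be this shared point, the midpoint of $BC$.

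The main obstacle is only the equilateral degeneracy; everything else is direct substitution.
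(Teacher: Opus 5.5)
Your argument is correct and is essentially the paper's proof: substitute $b=c$ into the ETC barycentrics of $X_{11}$ to obtain $\bigl(0:a(a-b)^2:a(a-b)^2\bigr)=(0:1:1)$. Your caveat is worth keeping: this simplification requires $a\neq b$, and in the equilateral case the incircle and nine-point circle coincide. The paper passes over this case silently. Your synthetic check is also valid on its own: both circles pass through the midpoint of $BC$, so by Feuerbach's theorem that point is their unique common point.
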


\begin{proof}
Without loss of generality, assume $b=c$.
From \cite{ETC}, we find that the barycentric coordinates for $X_{11}$ are
$$\left((b - c)^2 ( b + c-a): (a - c)^2 (c+a - b ): (a - b)^2 (a + b - c)\right).$$
Letting $c=b$ gives
$$X_{11}=\left(0: a (a - b)^2: a (a - b)^2\right)=(0:1:1),$$
so $X_{11}$ is the midpoint of $BC$.
\end{proof}


Using the information obtained above, we can draw a graph showing the ordering relationship
between various centers in a tall isosceles triangle.
The graph is shown in Figure~\ref{fig:isoscelesGraph30},
where an arrow from m to n means that $X_m\prec X_n$ in the isosceles order.

\begin{figure}[h!t]
\centering
\includegraphics[width=0.2\linewidth]{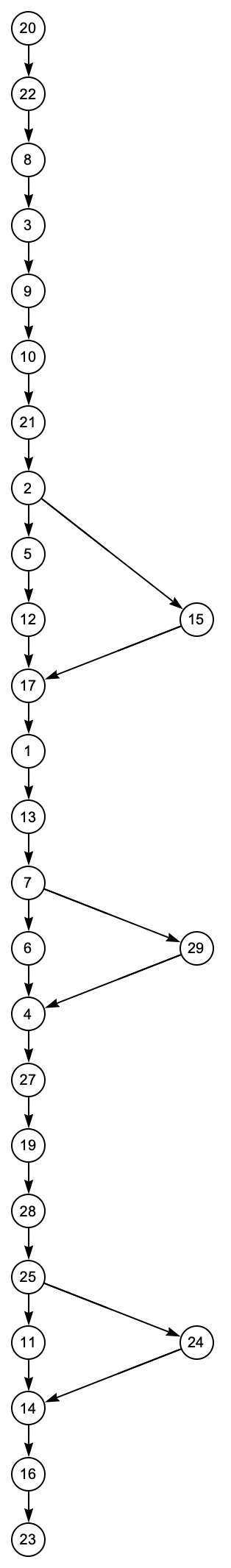}
\caption{An arrow from m to n means $X_m\prec X_n$ in the isosceles order.}
\label{fig:isoscelesGraph30}
\end{figure}


%
%

If we look at the first 100 triangle centers in \cite{ETC}, instead of the first 30, we get the following result.

\begin{theorem}
\label{thm:isosceles100}
Let $ABC$ be a tall isosceles triangle with $b=c$.
Then, using the isosceles order $\prec$, we have
$$X_{20}\prec X_{22}\prec X_{40}\prec X_{72}\prec X_{63}\prec X_{8}\prec X_{3}\prec X_{9}\prec X_{95}\prec X_{77}\prec X_{21}\prec X_{2}$$
$$\prec X_{45}\prec X_{38}\prec X_{55}\prec X_{37}\prec X_{12}
\prec X_{17}\prec X_{1}\prec X_{61}\prec X_{60}\prec X_{81}\prec X_{7}\prec X_{82}$$
$$\prec X_{89}\prec X_{6}\prec X_{65}\prec X_{33}\prec X_{51}\prec X_{57}\prec X_{4}\prec X_{27}\prec X_{19}
\prec X_{28}\prec X_{25}\prec X_{34}$$
$$\prec X_{64}\prec X_{11}\prec X_{98}\prec X_{74}\prec X_{67}\prec X_{88}\prec X_{14}\prec X_{80}\prec X_{36}
\prec X_{16}\prec X_{44}\prec X_{23}.$$
\end{theorem}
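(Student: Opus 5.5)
The plan follows the proof of Theorem~\ref{thm:isosceles}. I would reduce the chain to 47 consecutive comparisons $X_m\prec X_n$ and check each one symbolically. Since all centers lie on the median from $A$ in an isosceles triangle, ``closer to $A$'' can be read off from a single scalar along that median. I would not use squared distances $d_n$ here. Several centers in the chain (for instance $X_{11}$, and possibly $X_{23}$, $X_{44}$, $X_{16}$, $X_{36}$, $X_{80}$, $X_{88}$, $X_{67}$, $X_{74}$, $X_{98}$, $X_{64}$, $X_{34}$) either sit on $BC$ or are among the centers in Theorem~\ref{thm:insideIso} that may lie outside angle~$A$. For these, comparing unsigned distances from $A$ would not capture ``closer along the median.''

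First I would check, via Theorem~\ref{thm:inside}, that no center in the chain is excluded for tall triangles. The excluded indices $18,26,30,59,68,70,87,90,91,93,96,99,100$ do not appear in the chain. So every center here lies inside angle~$A$ or on $BC$ (e.g.\ $X_{11}$, by Theorem~\ref{thm:X11}), which means each point lies on the closed segment from $A$ to the midpoint of $BC$ or beyond it on the ray. On that ray, distance from $A$ is a decreasing function of the signed distance to $BC$. Theorem~\ref{thm:signedDistance} gives $d(P)=\frac{2K}{a}\cdot\frac{p}{p+q+r}$. Hence $X_m\prec X_n$ is equivalent to $\frac{p_m}{p_m+q_m+r_m}>\frac{p_n}{p_n+q_n+r_n}$ for all $0<a<b=c$. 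This uses the normalized first coordinate and avoids square roots altogether.

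Next, for each of the 47 adjacent pairs, I would substitute $c=b$ into the ETC coordinates, simplify the normalized first coordinates, and call \texttt{Simplify[f[m]>f[n], isoConstraint]} with the same \texttt{isoConstraint} as in Theorem~\ref{thm:isosceles}. By homogeneity one may set $a=1$, so each comparison becomes a one-variable rational inequality in $b>1$. Pairs already established in Theorem~\ref{thm:isosceles} (e.g.\ $X_{20}\prec X_{22}$, $X_4\prec X_{27}\prec X_{19}\prec X_{28}\prec X_{25}$) can serve as consistency checks. Transitivity of $\prec$, which holds because it is defined by a pointwise inequality over the whole family, then gives the full chain.

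The main obstacle is the centers with complicated coordinates, such as $X_{64}$, $X_{74}$, $X_{98}$, $X_{67}$, $X_{88}$, $X_{80}$ and $X_{36}$. For these, \texttt{Simplify} may fail to certify the inequality, or a denominator $p+q+r$ may vanish at some $b>a$. If \texttt{Simplify} does not return \texttt{True}, I would first use \texttt{FindInstance} on the negated inequality under the constraint. If it finds none, I would prove the inequality directly: clear denominators after determining their signs with \texttt{Reduce}, and then verify that the resulting polynomial in $t=b/a>1$ has no roots in $(1,\infty)$ by Sturm sequences (\texttt{CountRoots}), together with positivity at one sample point. I would also watch near-coincidences, such as $X_{98}$ versus $X_{74}$ and $X_{64}$ versus $X_{11}$ near the degenerate limit $b\to\infty$ or $b\to a^{+}$. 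There the difference may tend to $0$, so the strict inequality needs the exact root-counting argument rather than numerical sampling.
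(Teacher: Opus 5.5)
Your proposal is correct and is essentially the paper's method (that of Theorem~\ref{thm:isosceles}): check each adjacent pair with \texttt{Simplify} under \texttt{isoConstraint}. The only difference is that the paper compares the squared distances $d_n$ from $X_n$ to $A$ rather than your normalized first coordinates $p_n/(p_n+q_n+r_n)$, and the two tests are equivalent because on the median $d_n=|AM|^2\bigl(1-\tfrac{p_n}{p_n+q_n+r_n}\bigr)^2$ (with $M$ the midpoint of $BC$), where the quantity being squared is positive by Theorem~\ref{thm:inside}. One correction to your motivation: $\prec$ is defined by unsigned distance to $A$, so comparing $d_n$ is the definition itself and is never misleading; it is your signed-distance surrogate that genuinely depends on Theorem~\ref{thm:inside}, which you do correctly invoke.
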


Not all centers from $X_1$ through $X_{100}$ appear in this list because some centers
vary their order relative to other centers. The complete order for all centers
that lie in angle $A$ is shown in Figure~\ref{fig:isoscelesGraph100}.

\begin{figure}[h!t]
\centering
\includegraphics[width=0.56\linewidth]{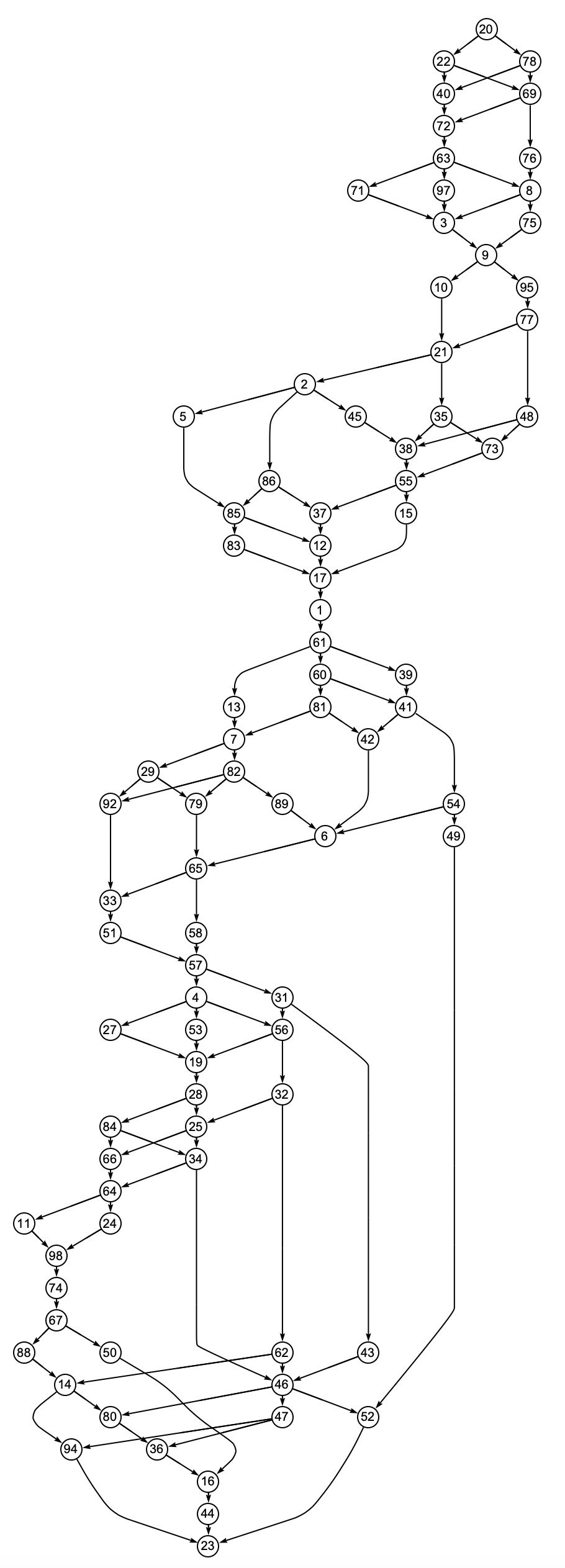}
\caption{An arrow from m to n means $X_m\prec X_n$.}
\label{fig:isoscelesGraph100}
\end{figure}

\begin{open}
Is there a simple reason that center $X_1$ is a cutpoint \cite{MathWorld-cutpoint}
for the graph shown in Figure~\ref{fig:isoscelesGraph100}?
\end{open}

\begin{open}
As more centers are added to the graph in Figure~\ref{fig:isoscelesGraph100},
does $X_1$ remain a cutpoint?
\end{open}

\newpage

%

\section{The Vertex Order}

\textbf{Definition.}
We define the \emph{vertex order} on triangle centers, $\prec$, by

\centerline{$P\prec Q$ if $P$ is closer to $A$ than $Q$}
\centerline{for all acute triangles $ABC$ with shortest side $BC$.}

\begin{theorem}
\label{thm:vertex}
Using the vertex order $\prec$, we have (Figure~\ref{fig:vertexOrder})
$$X_3\prec  X_9\prec  X_{10}\prec  X_2\prec  X_1\prec X_6\prec X_4\prec  X_{19}\prec  X_{16}.$$
\end{theorem}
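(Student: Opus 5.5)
The plan is to follow the template of the proof of Theorem~\ref{thm:isosceles}, replacing the isosceles constraint by the acute, shortest-side constraint. For each consecutive pair $X_m\prec X_n$ in the chain we must show $d_m<d_n$ for every acute triangle with $a\le b$, $a\le c$, where $d_n$ is the squared distance from $A=(1:0:0)$ to $X_n$. Transitivity then gives the whole chain, so only the eight consecutive comparisons are needed. For each $X_n$ we take its barycentrics $(p:q:r)$ from \cite{ETC}, normalize them, and apply the distance formula of \cite[\S2]{Grozdev} to the displacement vector $(p/s-1,\,q/s,\,r/s)$, where $s=p+q+r$. This gives
$$d_n=-\frac{a^2 qr+b^2 rp'+c^2 p'q}{s^2},\qquad p'=p-s.$$
This is a rational function in $a,b,c$, and each $d_n$ is well defined: none of the nine centers lies on the line at infinity for nondegenerate triangles, so $s\neq 0$. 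No ``inside angle $A$'' check is needed here, since squared distance comparisons already encode ``closer to $A$''.

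Next, the constraint
$$\texttt{acuteConstraint} = \texttt{triang \&\& a<=b \&\& a<=c \&\& a\^{}2+b\^{}2>c\^{}2 \&\& b\^{}2+c\^{}2>a\^{}2 \&\& c\^{}2+a\^{}2>b\^{}2}$$
is formed, and each inequality $d_m<d_n$ is handed to \texttt{Simplify} (or, more robustly, \texttt{Reduce[Not[d[m]<d[n]] \&\& acuteConstraint, \{a,b,c\}]}, returning \texttt{False}). By homogeneity we may normalize $a=1$, reducing each comparison to a two-variable polynomial inequality. To get there, we clear the denominators, which are squares of the $s$'s and hence positive, and prove $N(b,c)>0$ on the region $1\le b$, $1\le c$, $b^2<c^2+1$, $c^2<b^2+1$.

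The main obstacle is the boundary and degenerate behaviour. Equality can occur when $a=b=c$, since all centers coincide in an equilateral triangle, and near right triangles, since $X_4\to A$ when angle $A$ is right; but $A$ is the largest angle only if $a$ is largest, so this cannot happen here. The strict inequality therefore holds only on the open region minus the equilateral point. If \texttt{Simplify} fails on a pair such as $X_4\prec X_{19}$ or $X_{19}\prec X_{16}$, where the coordinates have high degree, I would first try the substitution $b=1+x$, $c=1+y$ with $x,y\ge0$ and hope that the numerator has all positive coefficients after also encoding acuteness. Failing that, I would use cylindrical algebraic decomposition via \texttt{Reduce} on the two-variable polynomial, which is decidable and feasible at these degrees.
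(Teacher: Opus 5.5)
Your method is the paper's: compute the squared distances from $A$ with the barycentric distance formula, have \texttt{Simplify} confirm each of the eight consecutive inequalities under the acute, shortest-side constraint, and get the chain by transitivity. Two details in your write-up are wrong as stated, and each would make the literal computation fail.

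First, the paper's constraint uses the strict inequalities $a<b$, $a<c$, and you need them too. Your non-strict version admits the equilateral triangle. There the centers coincide, so $d_m=d_n$. Worse, all three barycentrics of $X_{16}$ vanish there, so your assertion $s\neq 0$ is false and your \texttt{Reduce} call cannot return \texttt{False}. Your last paragraph concedes the equality, but the constraint you wrote contradicts it. Your account of the right-angle boundary also needs correcting: the boundary that actually threatens strictness is a right angle at $B$ or $C$. There $X_4$ and $X_{19}$ both collapse onto that vertex, so $d_4=d_{19}$. Your strict acuteness condition already excludes this case.

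Second, $X_{16}=(a\sin(A-\pi/3):\cdots)$ has coordinates proportional to $a^2\bigl(4K-\sqrt3\,(b^2+c^2-a^2)\bigr)$, where $K$ is the area. So $d_{16}$ is not a rational function of $a,b,c$, and $X_{19}\prec X_{16}$ does not reduce to a polynomial inequality in $b,c$ alone. For your fallback you must adjoin $K$ as a third variable with $16K^2=2a^2b^2+2b^2c^2+2c^2a^2-a^4-b^4-c^4$ and $K>0$, before clearing denominators or running CAD. Alternatively, isolate and square the radical while tracking signs.
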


\begin{figure}[h!t]
\centering
\includegraphics[width=0.6\linewidth]{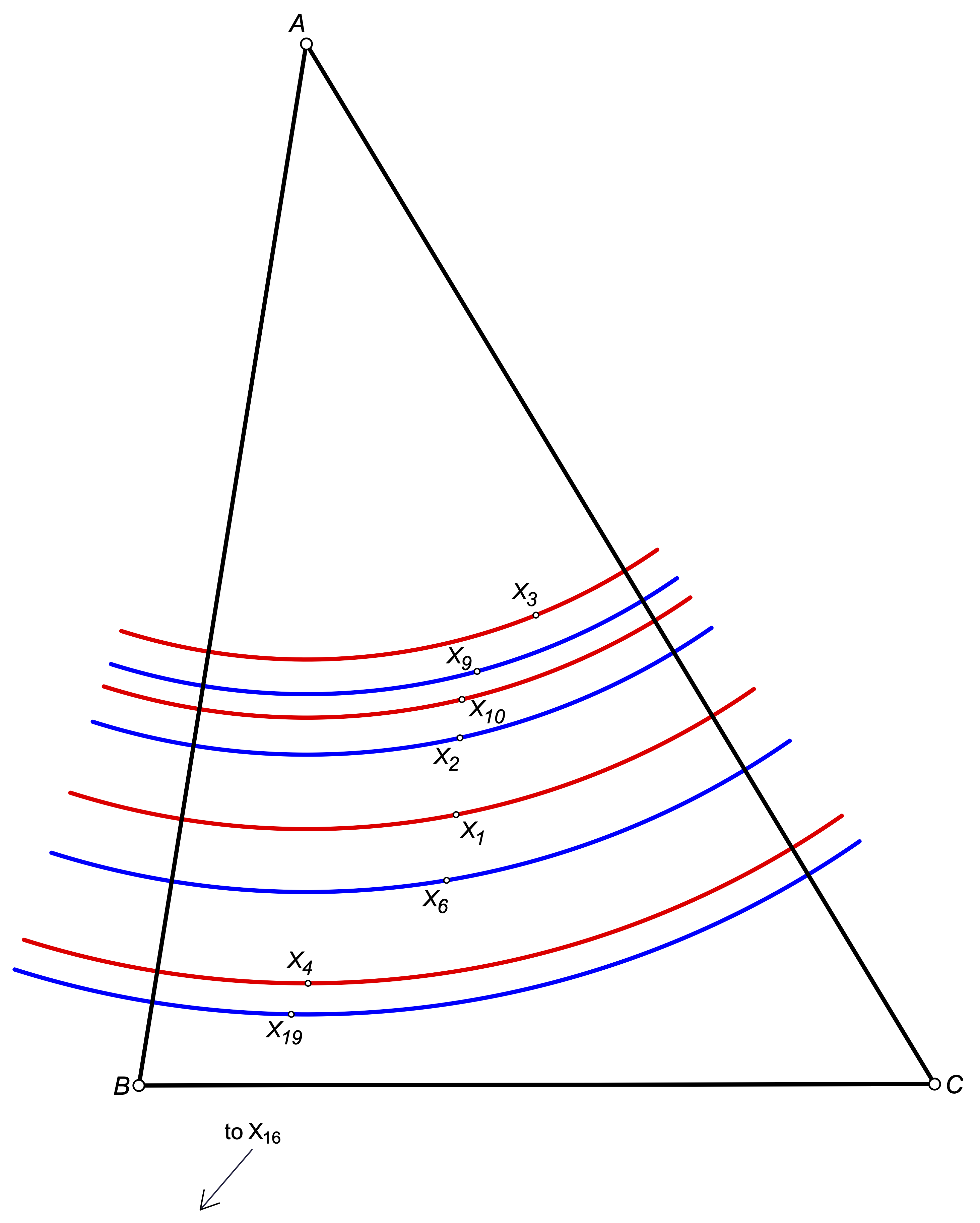}
\caption{Ordered centers in an acute triangle with shortest side $BC$}
\label{fig:vertexOrder}
\end{figure}

\begin{proof}
Let us start with the first claim, $X_{3}\prec  X_{9}$.
Let $v_n$ denote the square of the distance from $X_n$ to $A$.
It suffices to show that $v_3<v_9$
in all acute triangles $ABC$ with shortest side $BC$.

We get the barycentric coordinates for $X_{3}$ and $X_{9}$ from \cite{ETC}.
The value of $v_n$ is found using the distance formula \cite[\S2]{Grozdev}.
We shall write this as \texttt{v[n]} in Mathematica.

The condition that a triangle with smallest side $a$ is acute (\texttt{vertexConstraint}) can be written in Mathematica as

\begin{code}{0.15in}{5in}
\begin{verbatim}
    acuteConstraint = b^2<a^2+c^2 && c^2<a^2+b^2 && a^2<b^2+c^2
                                  && a>0 && b>0 && c>0;
    vertexConstraint = acuteConstraint && a<b && a<c;
\end{verbatim}
\end{code}

We can then issue the Mathematica command

\begin{code}{0.15in}{5in}
\begin{verbatim}
    Simplify[v[3]<v[9], vertexConstraint]
\end{verbatim}
\end{code}

and Mathematica responds with \texttt{True}, thus proving the inequality.

Using this \texttt{Simplify} command in Mathematica, all the other claims can be proven in the same manner.
\end{proof}

In Figure~\ref{fig:vertexOrder}, the spacing between the circular bands vary as we vary
the shape of the acute triangle. However, the order of the bands remains the same.

Not all centers from $X_1$ through $X_{20}$ appear in Theorem~\ref{thm:vertex} because some centers
vary their order in relationship with other centers.

The complete order for all centers
from $X_1$ to $X_{20}$  is shown in Figure~\ref{fig:vertexGraph}.

\begin{figure}[h!t]
\centering
\includegraphics[width=0.35\linewidth]{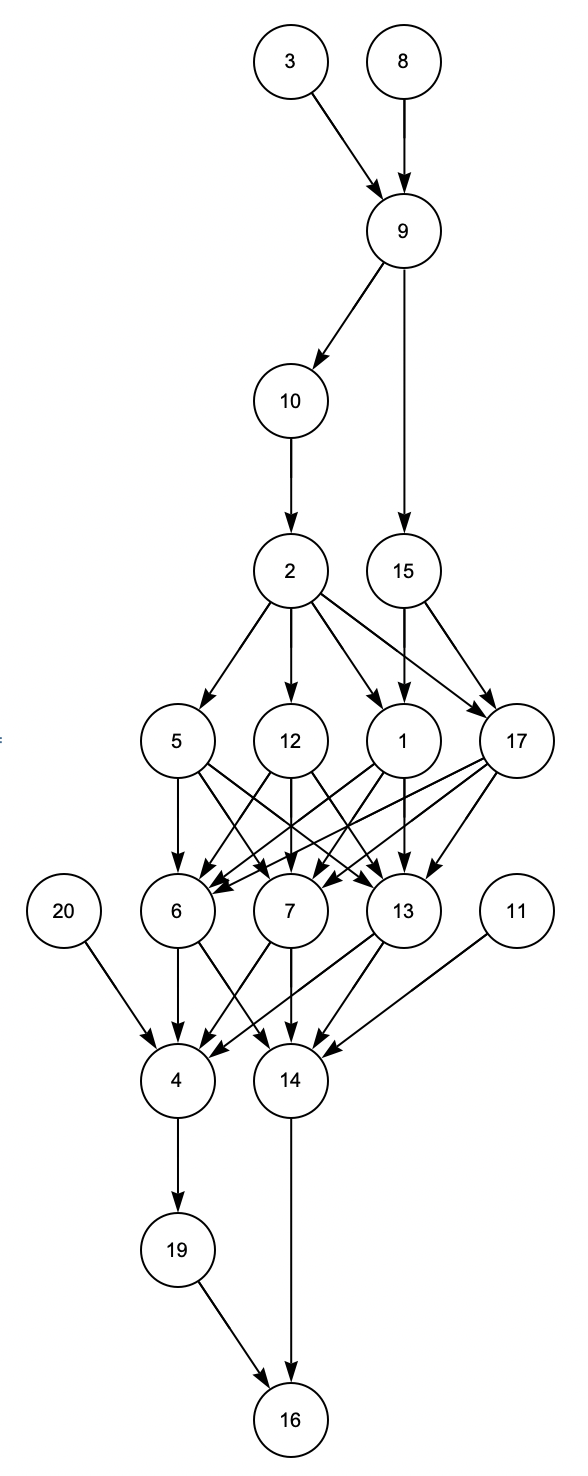}
\caption{An arrow from m to n means $X_m\prec X_n$.}
\label{fig:vertexGraph}
\end{figure}

The center $X_{18}$ is not included in this graph because its distance from vertex $A$ varies wildly.
Depending on the shape of the triangle, $X_{18}$ can get closer to vertex $A$ than either $X_3$ or $X_8$ and
can get farther away from $A$ than $X_{16}$.

Direct substitution confirms the following two results.

\begin{theorem}
Center $X_{18}$ coincides with vertex $A$ in a triangle with sides $a=8$, $b=15$, and
$c=\sqrt{(353+15\sqrt{93})/2}$.
\end{theorem}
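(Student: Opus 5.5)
Since this result is stated as a direct substitution check, I will verify it by showing that the barycentric coordinates of $X_{18}$ become proportional to $(1:0:0)$ for the given side lengths. This is the same criterion \texttt{Cross[X18,ptA]=={0,0,0}} used earlier for $X_{59}$, $X_{99}$, $X_{100}$. The plan is to take the ETC coordinates of the second Napoleon point, which I recall are of the form
$$X_{18}=\left(\frac{1}{\sqrt3\,S_A-S}:\frac{1}{\sqrt3\,S_B-S}:\frac{1}{\sqrt3\,S_C-S}\right),$$
where $S=2K$ and $S_A=(b^2+c^2-a^2)/2$, etc. The exact form and sign convention must be checked against \cite{ETC} before computing. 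Clearing denominators gives polynomial coordinates
$$\bigl(D_BD_C : D_AD_C : D_AD_B\bigr),\qquad D_A=\sqrt3\,S_A-S,\ \text{etc.}$$
So $X_{18}=A$ exactly when $D_A=0$ while $D_BD_C\neq 0$. Geometrically, $X_{18}$ runs off along the $A$-direction when the corresponding denominator vanishes.

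\textbf{Key step: $D_A=0$.} With $a=8$, $b=15$ and $c^2=(353+15\sqrt{93})/2$, we get
$$S_A=\frac{161+c^2}{2}=\frac{675+15\sqrt{93}}{4}.$$
From the formula $16K^2=2a^2b^2+2b^2c^2+2c^2a^2-a^4-b^4-c^4$ (Corollary~\ref{cor:distToLineK}), we have $S^2=4K^2$, which is a polynomial in $c^2$ and hence an element of $\mathbb{Q}(\sqrt{93})$. Since $S_A>0$ and $S>0$, the condition $D_A=0$ is equivalent to the squared identity $3S_A^2=S^2$. Both sides lie in $\mathbb{Q}(\sqrt{93})$, so I will expand them and compare the rational parts and the $\sqrt{93}$ parts separately. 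This is a routine check, and the value of $c$ was evidently chosen by solving this quadratic in $c^2$.

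\textbf{Nondegeneracy: $D_B\neq0$ and $D_C\neq0$.} I will check numerically, for instance $c\approx 19.6$, that $\sqrt3\,S_B\neq S$ and $\sqrt3\,S_C\neq S$. In this triangle angle $C$ is obtuse, so $S_C<0$ and hence $D_C<0$, which settles $D_C$ immediately. For $D_B$ a direct numerical evaluation suffices. I will also confirm that $8,15,c$ satisfy the triangle inequalities, so that the triangle exists. Note that this example is not acute, which is consistent with the statement: it says nothing about the vertex order's acuteness constraint.

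\textbf{Main obstacle.} The main difficulty is getting the exact ETC form and sign of the Napoleon-point coordinates right, that is, making sure it is $-S$ rather than $+S$ for $X_{18}$. If the sign were wrong, $D_A$ could not vanish at all, because $S_A>0$ here. I would resolve this by running Mathematica's \texttt{Simplify[Cross[X18,\{1,0,0\}]==\{0,0,0\}]} with the exact radical values substituted, using the coordinates copied verbatim from \cite{ETC}. That is precisely the ``direct substitution'' the paper refers to.
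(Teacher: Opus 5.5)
Your overall strategy is the paper's own "direct substitution": plug in the side lengths and show the barycentrics of $X_{18}$ become proportional to $(1:0:0)$. However, the explicit computation you commit to would fail.

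\textbf{Wrong coordinates.} The formula you recalled, $\bigl(1/(\sqrt3\,S_A-S):\cdots\bigr)$, gives the second isogonic center $X_{14}$ (the Kiepert perspector with base angle $-\pi/3$), not $X_{18}$. The ETC trilinears of $X_{18}$ are $\csc(A-\pi/6):\cdots$. Since $\sin(A-\pi/6)=(\sqrt3\,S-S_A)/(2bc)$, the barycentrics are $\bigl(1/(S_A-\sqrt3\,S):1/(S_B-\sqrt3\,S):1/(S_C-\sqrt3\,S)\bigr)$. The $\sqrt3$ multiplies $S$, not $S_A$. The sign issue you flagged as the main obstacle is not where the error lies.

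\textbf{False key identity.} Your identity $3S_A^2=S^2$ says $A=60^\circ$, and it is false for this triangle. Your value $S_A=(675+15\sqrt{93})/4$ is correct, and one finds $S^2=(158850+6750\sqrt{93})/16$. So what actually holds is $S_A^2=3S^2=(476550+20250\sqrt{93})/16$, i.e.\ $\cot A=\sqrt3$ and $A=30^\circ$. Equivalently, the law of cosines with $A=30^\circ$ gives $64=225+c^2-15\sqrt3\,c$. Hence $c=(15\sqrt3+\sqrt{31})/2$ and $c^2=(353+15\sqrt{93})/2$.

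\textbf{Wrong numerics and nondegeneracy.} In fact $c\approx15.77$, not $19.6$. The triangle is acute, with angles $30^\circ$, about $69.6^\circ$ and about $80.4^\circ$. So $S_C>0$, and your argument "$C$ is obtuse, hence $D_C<0$" collapses. Your remark that the example is not acute is also backwards. It is acute with shortest side $BC$, which is exactly why the paper places it in the vertex-order section: it shows $X_{18}$ can reach $A$ within that family.

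\textbf{The fix.} The correct nondegeneracy check is easy. We have $D_B=S(\cot B-\sqrt3)$ and $D_C=S(\cot C-\sqrt3)$. Since $a<b$ and $a<c$, the angles $B$ and $C$ exceed $A=30^\circ$, so both $D_B$ and $D_C$ are negative. Hence $X_{18}=(D_BD_C:0:0)=A$.

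Your fallback of running \texttt{Cross[X18,\{1,0,0\}]} on coordinates copied verbatim from ETC would work, and it is the paper's proof. The hand argument as written, however, does not.
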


\begin{theorem}
Center $X_{18}$ lies on the line at infinity in a triangle with sides
$a=16513$, $b=42189$, and $c=7 \sqrt{35 \left(940379+2 \sqrt{10302477117}\right)}$.
\end{theorem}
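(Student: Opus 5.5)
The plan is to verify directly that the sum of the barycentric coordinates of $X_{18}$ vanishes for the stated side lengths. By the criterion used for $X_{30}$, a point $(p:q:r)$ lies on the line at infinity exactly when $p+q+r=0$. First I take the barycentric coordinates of $X_{18}$ (the second Napoleon point) from \cite{ETC}. They can be written as
$$\bigl(a^4-2(b^2-c^2)^2+a^2(b^2+c^2-4\sqrt3 K) : \cdots : \cdots\bigr),$$
or equivalently as the isogonal-style form $1/\sin(A-\pi/3)$ in trilinears. Here $K$ is the area, expressed via the formula $16K^2=2a^2b^2+2b^2c^2+2c^2a^2-a^4-b^4-c^4$ from Corollary~\ref{cor:distToLineK}. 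Using the polynomial form with the single radical $K$ is preferable, since then only $K$ and $c$ carry nested radicals.

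Next I form $S=p+q+r$ as a polynomial expression in $a^2,b^2,c^2$ and $K$. By symmetry $S$ has the shape $P(a^2,b^2,c^2)-\lambda\sqrt3\,K\,(a^2+b^2+c^2)$, where $P$ is a symmetric polynomial. Setting $S=0$ and squaring therefore gives the polynomial condition $P^2=3\lambda^2K^2(a^2+b^2+c^2)^2$ on $a^2,b^2,c^2$. With $a=16513$ and $b=42189$ fixed, this is a polynomial equation in $t=c^2$. I expect it to factor so that one root is $t=245\,(940379+2\sqrt{10302477117})$, which is precisely $c^2$ for the given $c$. I then check that this root also satisfies the unsquared sign condition, i.e.\ that $P$ and $K(a^2+b^2+c^2)$ have the appropriate matching signs. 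I also confirm that the triangle inequalities hold, so that the triangle exists and $K>0$.

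In practice this amounts to the "direct substitution" the paper mentions. In Mathematica I substitute the values, compute $K$ exactly from the formula above, and apply \texttt{RootReduce} or \texttt{FullSimplify} to $p+q+r$ to obtain $0$. I also check that the coordinate triple is not $(0,0,0)$, so that the point is genuinely defined.

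The main obstacle is exact simplification with nested radicals. $K$ involves $\sqrt{16K^2}$ where $16K^2$ itself contains $\sqrt{10302477117}$, and we also have $\sqrt3$. Zero-testing such an expression requires either denesting $K$ or working in an algebraic number field. First I would try \texttt{RootReduce} on the whole sum. If that is slow, I would instead verify the squared polynomial identity exactly in $\mathbb{Q}(\sqrt{10302477117})$ and settle the sign by a high-precision numerical evaluation.
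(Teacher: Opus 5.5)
Your overall method, substituting the given sides into the ETC barycentrics of $X_{18}$ and checking that the coordinate sum vanishes, is exactly the paper's ``direct substitution.'' The problem is that the coordinates you wrote down are not those of $X_{18}$. Both $a^4-2(b^2-c^2)^2+a^2(b^2+c^2-4\sqrt3K)$ and the trilinears $1/\sin(A-\pi/3)$ describe $X_{14}$, the second isogonic center. The second Napoleon point $X_{18}$ has trilinears $\csc(A-\pi/6)$.

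With your formula the coordinate sum is $48K^2-4\sqrt3K(a^2+b^2+c^2)=4\sqrt3K\bigl(4\sqrt3K-(a^2+b^2+c^2)\bigr)$. By Weitzenb\"ock's inequality this is strictly negative in every non-equilateral triangle. After squaring, the condition becomes $a^4+b^4+c^4=a^2b^2+b^2c^2+c^2a^2$, which has no root $t=c^2$ at all when $a\neq b$. So the factorization you ``expect'' does not exist, and \texttt{RootReduce} would return a nonzero value: the verification fails as written. There is also a smaller slip: $c^2=7^2\cdot 35(\cdots)=1715(940379+2\sqrt{10302477117})$, not $245(\cdots)$.

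To repair the argument, use the correct coordinates, for instance $a^4-3a^2(b^2+c^2)+2(b^2-c^2)^2+4\sqrt3\,a^2K$ and its cyclic analogues. Their sum is $4K\bigl(\sqrt3(a^2+b^2+c^2)-20K\bigr)$. Hence $X_{18}$ is at infinity if and only if $20K=\sqrt3(a^2+b^2+c^2)$, equivalently when the Brocard angle satisfies $\cot\omega=5/\sqrt3$.

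Both sides of that condition are positive, so squaring introduces no extraneous roots. Squaring gives $7(a^4+b^4+c^4)=11(a^2b^2+b^2c^2+c^2a^2)$. As a quadratic in $t=c^2$, its roots are $t=\bigl(11(a^2+b^2)\pm5\sqrt{22a^2b^2-3a^4-3b^4}\bigr)/14$.

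For $a=16513$ and $b=42189$ one checks two facts:
\begin{itemize}
\item $11(a^2+b^2)/14=1612749985=1715\cdot 940379$;
\item $22a^2b^2-3a^4-3b^4=9604^2\cdot 10302477117$.
\end{itemize}
So the $+$ root is exactly $1715(940379+2\sqrt{10302477117})$, the square of the given $c$.

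On this triangle $16K^2=3(a^2+b^2+c^2)^2/25>0$. This shows the triangle exists and gives $K=\sqrt3(a^2+b^2+c^2)/20$ directly, so the nested-radical difficulty you anticipated never arises.
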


\newpage

%

\section{The Side Order}

\textbf{Definition.}
We define the \emph{side order} on triangle centers, $\prec$, by

\centerline{$P\prec Q$ if $P$ is further from $BC$ than $Q$ }
\centerline{in all acute triangles $ABC$ with smallest side $BC$.}

By ``$P$ is further from $BC$ than $Q$'' we mean that $d(P)>d(Q)$
where $d(X)$ denotes the signed distance from $X$ to $BC$.

Under this ordering, we find some order among triangle centers as shown by the following theorem
which involves some of the triangle centers from $X_1$ to $X_{40}$.

\begin{theorem}
\label{thm:acute}
Using the side order, $\prec$, we have
$$X_{26}\prec X_{20}\prec X_{22}\prec X_{40}\prec X_{8}\prec  X_9 \prec X_{10}\prec  X_2\prec X_{37}\prec X_1\prec  X_7\prec X_{29}$$
$$\prec X_{33}\prec X_4\prec  X_{27}\prec  X_{19}\prec  X_{28}\prec  X_{25}\prec  X_{34}\prec X_{24}\prec 
X_{36}\prec X_{16}.$$
\end{theorem}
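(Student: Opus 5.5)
The plan is to reduce each link $X_m\prec X_n$ of the chain to a polynomial inequality in $a,b,c$ and check it under the side-order constraint, exactly as in the proofs of Theorems~\ref{thm:isosceles} and~\ref{thm:vertex}. By the definition of the side order, $X_m\prec X_n$ means $d(X_m)>d(X_n)$ for every acute triangle with smallest side $BC$. Theorem~\ref{thm:signedDistance} gives the signed distance in closed form. If $X_n=(p_n:q_n:r_n)$ is taken from \cite{ETC}, then
$$d(X_n)=\frac{2K}{a}\cdot\frac{p_n}{p_n+q_n+r_n}.$$
The factor $2K/a$ is positive and common to both sides, so each claim is equivalent to
$$\frac{p_m}{p_m+q_m+r_m}>\frac{p_n}{p_n+q_n+r_n}.$$
This normalized first coordinate is a rational function of $a,b,c$. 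Working with it avoids square roots and the absolute values in Corollary~\ref{cor:distToLineK}, and it also handles centers that may lie below $BC$, such as $X_{16}$, $X_{24}$ and $X_{36}$, consistent with Theorem~\ref{thm:aboveOrder}.

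Concretely, I would define in Mathematica \texttt{s[n]} $=p_n/(p_n+q_n+r_n)$ for each of the 22 centers in the chain. I would reuse \texttt{vertexConstraint} from the proof of Theorem~\ref{thm:vertex}, since it encodes an acute triangle with $a<b$ and $a<c$. Then for each of the 21 consecutive pairs I would issue
$$\texttt{Simplify[s[m] > s[n], vertexConstraint]}$$
and expect the answer \texttt{True}. Transitivity of $>$ on reals then gives the whole chain; only consecutive pairs need to be checked.

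The denominators $p_n+q_n+r_n$ must not vanish on the constraint region, since a center at infinity has no distance to $BC$. I would verify this with \texttt{FindInstance} returning no instance of $p_n+q_n+r_n=0$ under the constraint. Where \texttt{Simplify} is hesitant, I would clear the denominators by hand, multiplying through by their squares so that the direction of the inequality is preserved. The resulting polynomial inequality can then be tested with \texttt{Reduce} or \texttt{Resolve} over the reals (cylindrical algebraic decomposition).

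The main obstacle will be the links involving centers with high-degree coordinates, for example $X_{26}\prec X_{20}$, $X_{24}\prec X_{36}$ and $X_{29}\prec X_{33}$. There the cleared polynomial has large degree and \texttt{Simplify} may time out. My first remedy is to use homogeneity: normalize $a=1$, which removes one variable, and parametrize $b,c$ so that the constraint region becomes simple. A natural choice is $b=1+x$, $c=1+y$ with $x,y>0$, together with the acuteness conditions. I would then ask whether the difference polynomial has only positive coefficients, or apply CAD in two variables. Failing that, I would factor the difference; factors such as $b^2+c^2-a^2$ or $b-a$ tend to appear, and their signs are fixed by acuteness and by $BC$ being the shortest side.
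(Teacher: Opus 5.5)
Your proposal is correct and is essentially the paper's proof. The paper takes the signed distances from Theorem~\ref{thm:signedDistance} and checks each consecutive link with \texttt{Simplify[d[m]>d[n], sideConstraint]}, where \texttt{sideConstraint} is exactly your \texttt{vertexConstraint} (acute, $a<b$, $a<c$), so cancelling the positive factor $2K/a$ changes nothing.

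One small correction: the normalized coordinate for $X_{16}$ is not rational in $a,b,c$, because its barycentrics involve $\sqrt3$ and the area. So in your hand-cleared fallback for the link $X_{36}\prec X_{16}$, you would have to adjoin $K$ as an extra variable with $K>0$ and $16K^2=2a^2b^2+2b^2c^2+2c^2a^2-a^4-b^4-c^4$.
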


\begin{proof}
Let us start with the first claim, $X_{26}\prec  X_{20}$.
We need to show that $d(X_{26})>d(X_{20})$
in all acute triangles $ABC$ with shortest side $BC$.

We get the barycentric coordinates for $X_{26}$ and $X_{20}$ from \cite{ETC}.
From Theorem~\ref{thm:signedDistance}, we find their signed distances from $BC$
which we shall call \texttt{d[26]} and \texttt{d[20]} in Mathematica.

The condition that a triangle with smallest side $a$ is acute (\texttt{sideConstraint}) can be written in Mathematica as
\begin{verbatim}
    acuteConstraint = b^2<a^2+c^2 && c^2<a^2+b^2 && a^2<b^2+c^2
                                  && a>0 && b>0 && c>0;
    sideConstraint = acuteConstraint && a<b && a<c;
\end{verbatim}
We can then issue the Mathematica command\\
\centerline{\texttt{Simplify[d[26]>d[20], sideConstraint]}}
and Mathematica responds with \texttt{True}, thus proving the inequality is always true.

Using this \texttt{Simplify} command in Mathematica, all the other claims can be proven in the same manner.
\end{proof}

What this means, graphically, is that in Figure~\ref{fig:acute},
as we vary the shape of $\triangle ABC$ (keeping it acute with smallest side $BC$),
the spacing between the horizontal lines changes, but the order of the horizontal lines remains fixed.

\begin{figure}[h!t]
\centering
\includegraphics[width=0.7\linewidth]{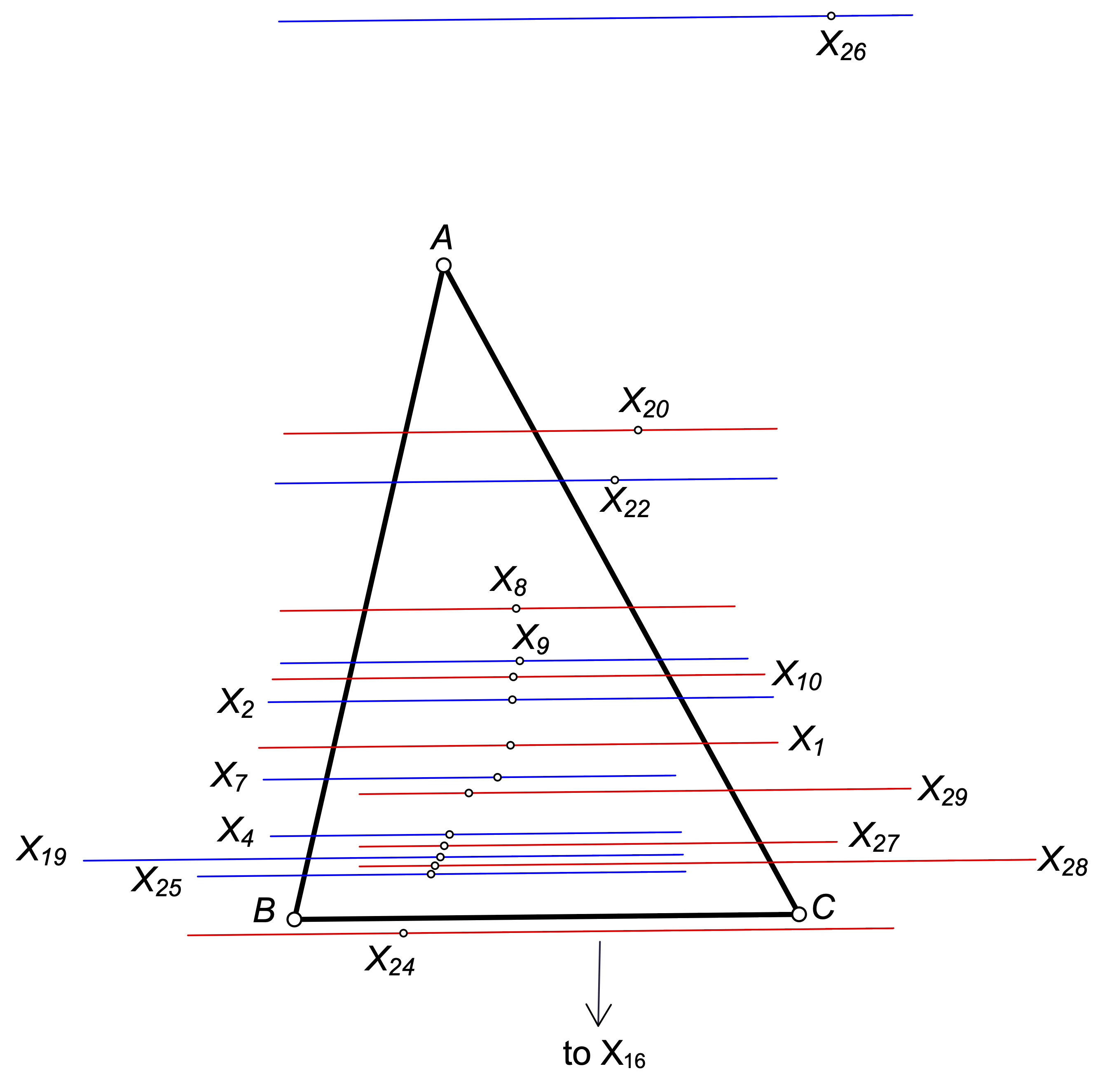}
\caption{Centers in an acute triangle with shortest side $BC$}
\label{fig:acute}
\end{figure}

Using Mathematica to compare pairs of triangle centers, we can draw a graph showing the ordering relationship
between various centers using the side order.
The graph is shown in Figure~\ref{fig:sideGraph},
where an arrow from m to n means that $X_m\prec X_n$ in the side order.

\begin{figure}[h!t]
\centering
\includegraphics[width=0.5\linewidth]{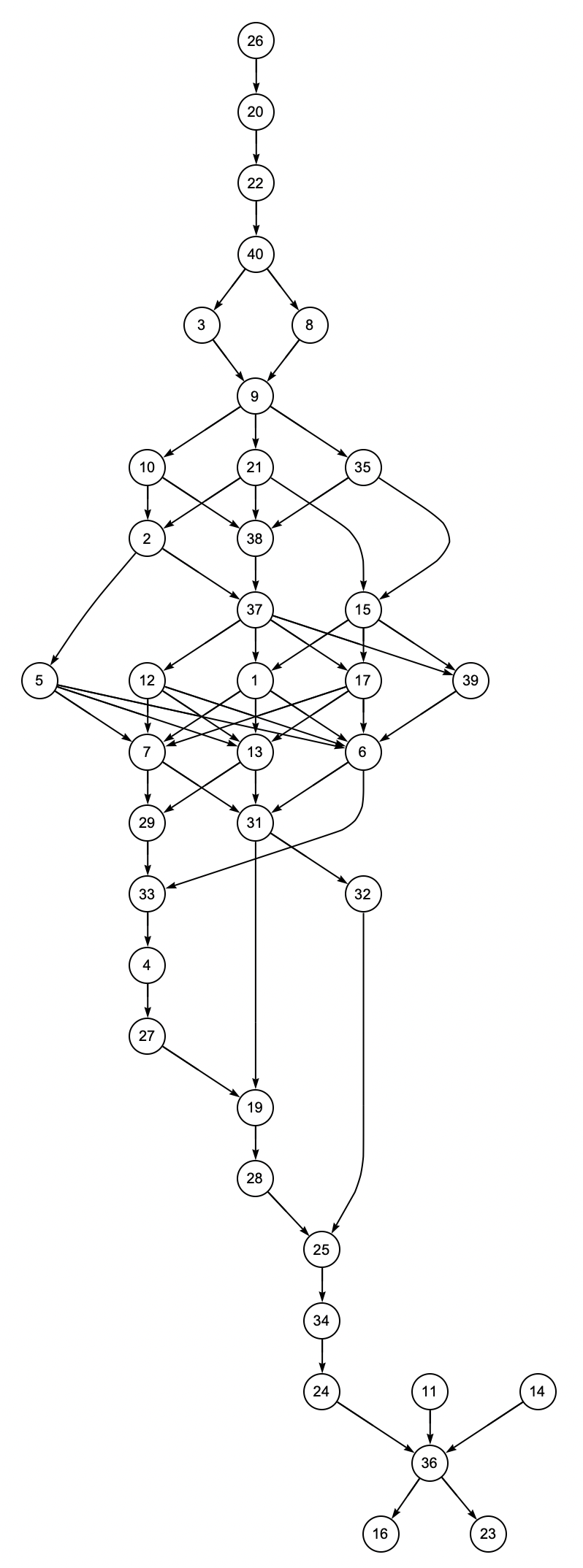}
\caption{An arrow from m to n means $X_m\prec X_n$ in the side order.}
\label{fig:sideGraph}
\end{figure}

The center $X_{18}$ is not included in this graph because it can get arbitrarily far
away from $BC$ in either direction.
The center $X_{30}$ is not included in this graph because it lies on the line at infinity.

\newpage

%

\section{The Trace Order}

A \emph{cevian} of a triangle is a line through one of the vertices.
The \emph{trace} of the cevian is the point where that line meets the side opposite to that vertex.

\begin{figure}[h!t]
\centering
\includegraphics[width=0.4\linewidth]{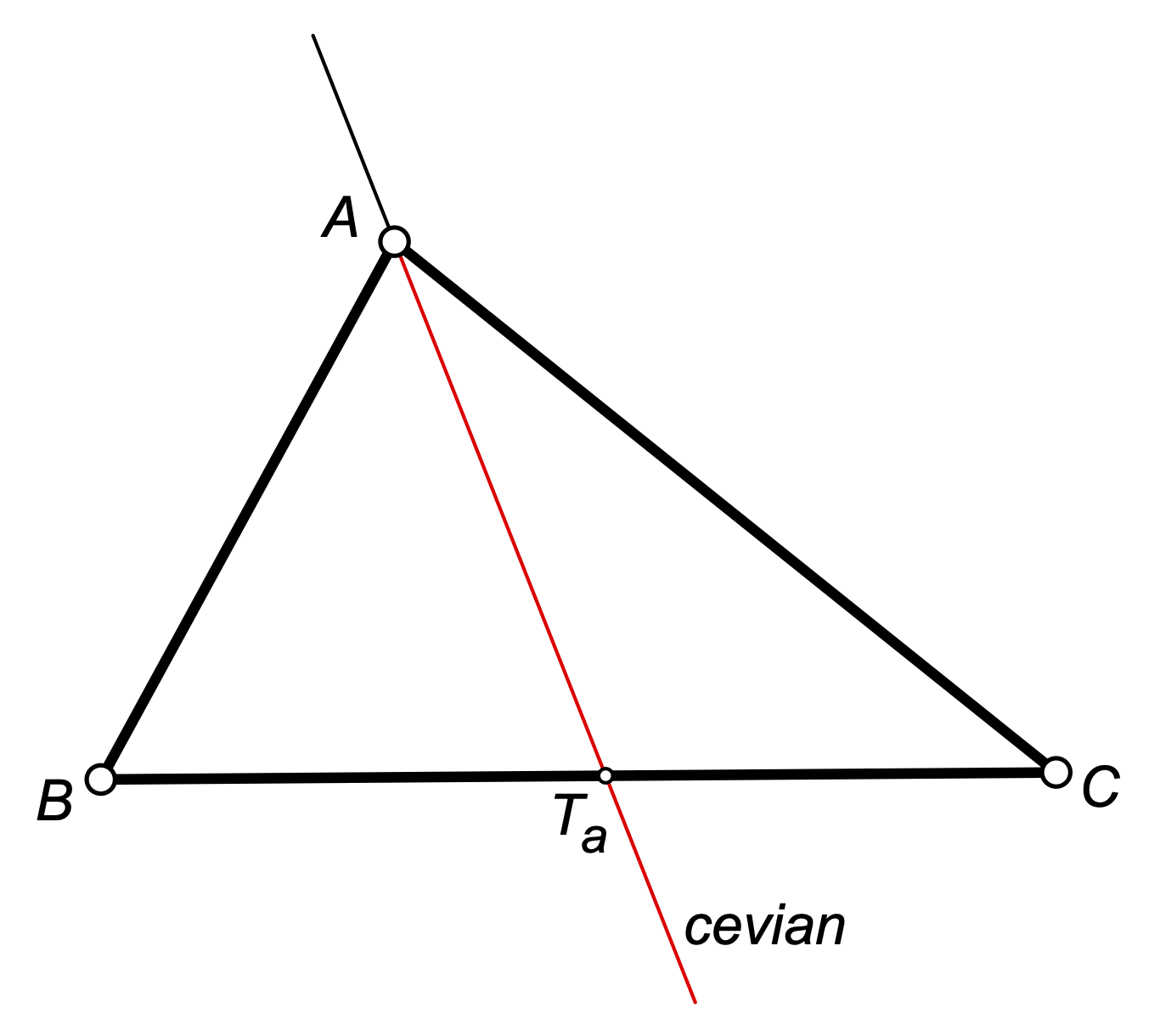}
\caption{Cevian with trace $T_a$}
\label{fig:cevian}
\end{figure}

Figure~\ref{fig:cevian} shows a cevian through vertex $A$ and its trace $T_a$.
Note that the trace can lie anywhere on line $BC$ and is not necessarily between $B$ and $C$.

If $P$ is a point in the plane of $\triangle ABC$, distinct from $A$, then the trace of
the cevian $AP$ is called the \emph{A-trace} of $P$ and will be denoted by $\hbox{Atrace}(P)$.
In Figure~\ref{fig:traceOrderDef}, $T_P=\hbox{Atrace}(P)$.

\begin{figure}[h!t]
\centering
\includegraphics[width=0.4\linewidth]{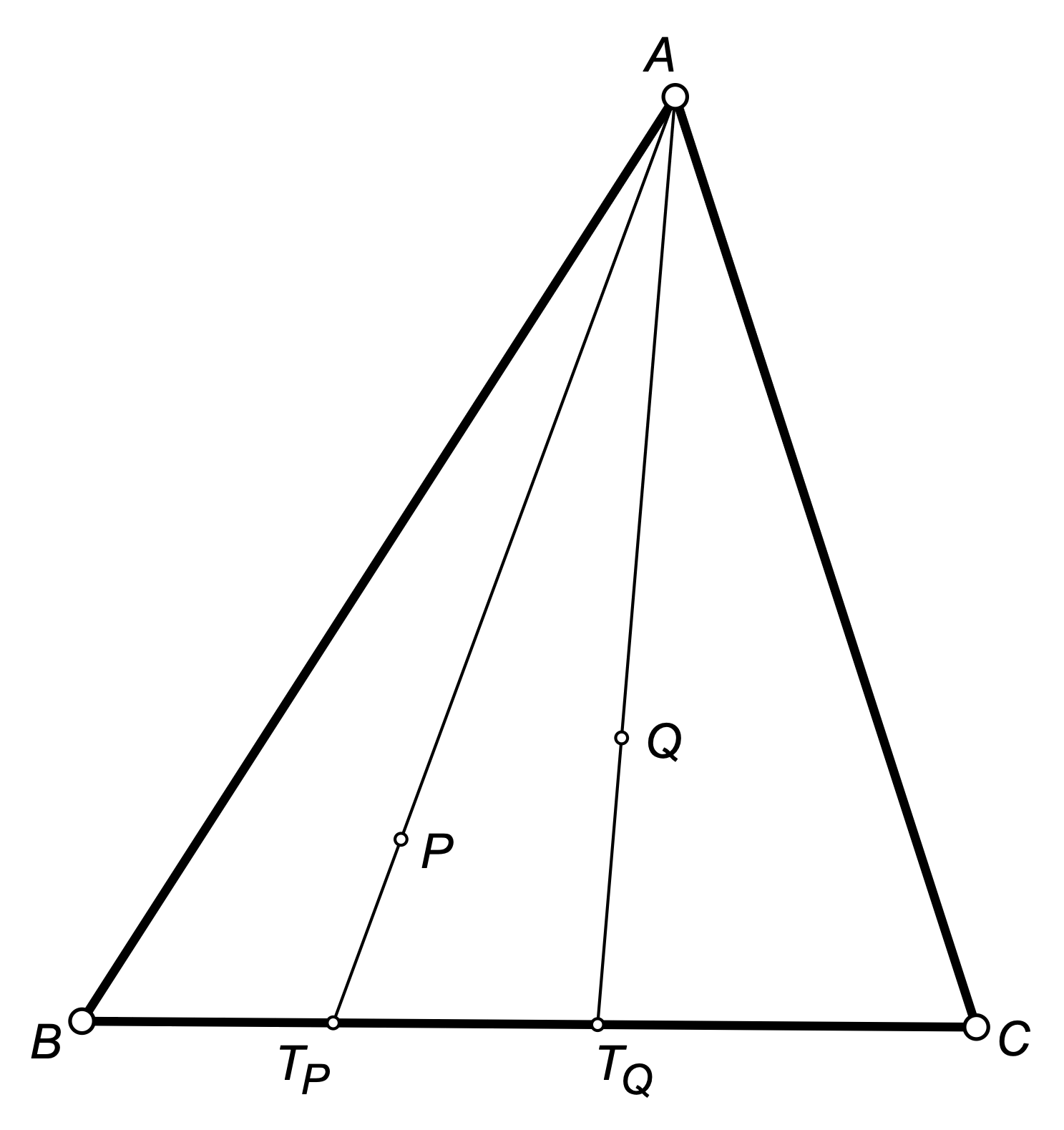}
\caption{$P\prec $Q}
\label{fig:traceOrderDef}
\end{figure}

\begin{lemma}
\label{lemma:unsignedAtrace}
If $P$ has barycentric coordinates $(p:q:r)$, then the distance from $\hbox{Atrace}(P)$ to $C$ is
$$\left|\frac{aq}{q+r}\right|.$$
\end{lemma}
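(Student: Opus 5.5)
The plan is to compute the barycentric coordinates of $\hbox{Atrace}(P)$ explicitly and then read off the distance to $C$ from the coordinates. Since $P\neq A$, the points $A=(1:0:0)$ and $P=(p:q:r)$ determine the cevian $AP$. Every point of this line has the form $\lambda(1,0,0)+\mu(p,q,r)$. The line $BC$ has equation $x=0$, so the intersection is obtained by choosing $\lambda=-\mu p$. That point is $(0:q:r)$. Thus $\hbox{Atrace}(P)=(0:q:r)$, which is well defined provided $q+r\neq 0$. When $q+r=0$ the line $AP$ is parallel to $BC$, the trace is at infinity, and the formula is read as infinite. I would note this degenerate case explicitly.

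Next I normalize. The normalized coordinates of the trace are $T=\left(0:\frac{q}{q+r}:\frac{r}{q+r}\right)$. For a point on line $BC$ written as $T=\beta B+\gamma C$ with $\beta+\gamma=1$, we have $T-C=\beta(B-C)$. Hence $|TC|=|\beta|\cdot|BC|=a\left|\frac{q}{q+r}\right|$. This is exactly the claimed expression.

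For a check consistent with the paper's methods, I would apply the distance formula of \cite[\S2]{Grozdev} to $(0:q:r)$ and $C=(0:0:1)$. With normalized displacement $(0,\beta,-\beta)$, the squared distance $-a^2yz-b^2zx-c^2xy$ gives $a^2\beta^2$, which agrees with the result above.

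The only real subtlety is the sign and degenerate bookkeeping. I must make sure the absolute value covers traces lying outside segment $BC$, where $\beta<0$ or $\beta>1$, and I must exclude $q+r=0$. Neither step is a genuine obstacle; the main point is simply to justify that the affine parameter along $BC$ is the normalized barycentric coordinate of $B$.
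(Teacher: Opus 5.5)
Your proof is correct and takes the paper's route: intersect $AP$ with $x=0$ to get $\hbox{Atrace}(P)=(0:q:r)$, then compute its distance to $C$. The paper applies the barycentric distance formula directly, which is your check; your affine argument $T-C=\beta(B-C)$ is an equivalent shortcut, and your treatment of the case $q+r=0$ (where $AP\parallel BC$) covers a point the paper leaves implicit.
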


\begin{proof}
Using the intersection formula (5) in \cite{Grozdev}, we find that point $T_P=\hbox{Atrace}(P)$
has coordinates $(0:q:r)$. Using the distance formula (9) in  \cite{Grozdev}, we find that
the square of the distance from $T_P$ to $C$ is $a^2q^2/(q+r)^2$.
\end{proof}

\begin{lemma}
\label{lemma:right}
Let $ABC$ be a triangle, and let $P$ be a point on the line $BC$ with barycentric coordinates
$P=(0:v:w)$.
Then $P$ lies on the extension of $BC$ beyond $C$ if and only if
$$v(v+w)<0.$$
\end{lemma}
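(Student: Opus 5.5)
The plan is to pass to normalized coordinates and then read off the sign of the third coordinate, exactly as in the proofs of the results on angle $A$ and of Theorem~\ref{thm:above}. Since $P=(0:v:w)$ is a point of line $BC$ not at infinity, we have $v+w\neq 0$. Its normalized barycentric coordinates are therefore
$$\left(0:\frac{v}{v+w}:\frac{w}{v+w}\right)=(0:\beta:\gamma),\qquad \beta+\gamma=1.$$
In these coordinates $P=\beta B+\gamma C$ as an affine combination. Equivalently,
$$P=B+\gamma(C-B)=C+\beta(B-C).$$

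Next I will parametrize line $BC$ by the position relative to $C$. Writing $P=C+\beta(B-C)$ shows the following. The point $P$ lies on the ray from $C$ through $B$ exactly when $\beta\ge 0$. It lies on the opposite ray, the extension of $BC$ beyond $C$, exactly when $\beta<0$; the case $\beta=0$ gives $P=C$ itself, which is not on the extension. One can also see this from Lemma~\ref{lemma:7areas}. The portion of line $BC$ beyond $C$ borders the regions where $q$ changes sign: $q>0$ on the same side of $CA$ as $B$ and $q<0$ on the other side. Line $CA$ crosses line $BC$ only at $C$, so the points of $BC$ beyond $C$ are precisely those with $q<0$. I will present the affine argument as the main one and mention the figure-based reading as a consistency check.

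Finally, I convert $\beta<0$, that is $v/(v+w)<0$, into the stated form. Multiplying by the positive quantity $(v+w)^2$ gives $v(v+w)<0$, and the steps reverse, which yields the equivalence. There is no real obstacle here. The only point needing care is excluding the degenerate cases: $v+w=0$ (the point at infinity, which the statement implicitly excludes, as was done earlier) and $v=0$ (the point $C$, where the strict inequality correctly fails).
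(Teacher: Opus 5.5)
Your proof is correct and follows essentially the same route as the paper: normalize to $\bigl(0:\tfrac{v}{v+w}:\tfrac{w}{v+w}\bigr)$, observe that $P$ is beyond $C$ exactly when the $B$-coordinate is negative, and clear denominators by multiplying by $(v+w)^2$. The only difference is that you justify the sign condition with the explicit parametrization $P=C+\beta(B-C)$ rather than reading it off Figure~\ref{fig:7areas}, which keeps the condition explicitly on the $B$-coordinate (the paper's text writes $q<0$ for a point $(0:p:q)$, though it then correctly uses $v/(v+w)<0$).
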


\begin{proof}
The normalized coordinates for $P$ are
$$\left(0:\frac{v}{v+w}:\frac{w}{v+w}\right).$$
Looking at Figure~\ref{fig:7areas}, we see that a point with normalized coordinates $(0:p:q)$
lies on the extension of $BC$ beyond $C$ if and only if $q<0$.
Thus, $P$ lies on the extension of $BC$ beyond $C$ if and only if $v/(v+w)<0$.
Multiplying both sides of this inequality by the positive quantity $(v+w)^2$, we see
that the condition is $v(v+w)<0$.
\end{proof}

\textbf{Definition.} If $X$ lies on line $BC$, then the \emph{signed distance} from $X$ to $C$
is the actual distance (taken as positive) if $X$ lies on $\overrightarrow{CB}$ and the negative of the distance
if $X$ lies on the extension of $BC$ beyond $C$.

\begin{lemma}
\label{lemma:signedAtrace}
If $P$ has barycentric coordinates $(p:q:r)$, then the signed distance from $\hbox{Atrace}(P)$ to $C$ is
$$\frac{aq}{q+r}.$$
\end{lemma}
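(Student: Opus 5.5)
The plan mirrors the proof of Theorem~\ref{thm:signedDistance}: take the unsigned distance from Lemma~\ref{lemma:unsignedAtrace} and fix the sign using Lemma~\ref{lemma:right}. First I need the coordinates of $T_P=\hbox{Atrace}(P)$. As in the proof of Lemma~\ref{lemma:unsignedAtrace}, these are $(0:q:r)$. This is because the line $AP$ consists of the points $(\lambda+p:q:r)$, and the one with first coordinate zero is $(0:q:r)$. I assume $q+r\neq 0$, so that the cevian $AP$ is not parallel to $BC$ and the trace is a finite point; otherwise the expression is undefined and so is the trace. By Lemma~\ref{lemma:unsignedAtrace}, the distance from $T_P$ to $C$ is $|aq/(q+r)|$. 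Since $a>0$, it remains to show that $q/(q+r)$ has the sign prescribed by the definition of signed distance.

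I will split into three cases. If $T_P$ lies on the extension of $BC$ beyond $C$, then Lemma~\ref{lemma:right} with $v=q$ and $w=r$ gives $q(q+r)<0$. Dividing by $(q+r)^2>0$ gives $q/(q+r)<0$, so $aq/(q+r)$ is the negative of the distance, as required. If instead $T_P$ lies on the ray $\overrightarrow{CB}$ and $T_P\neq C$, then Lemma~\ref{lemma:right} gives $q(q+r)\geq 0$. Moreover $q\neq 0$, since $q=0$ would make $T_P=(0:0:r)=C$. Hence $q(q+r)>0$, so $q/(q+r)>0$ and the formula gives the positive distance. Finally, if $T_P=C$, then $q=0$, and both the formula and the distance are $0$.

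The only delicate point is the boundary case. Lemma~\ref{lemma:right} is stated as a strict "if and only if", so its negation gives only $q(q+r)\ge 0$ on the ray $\overrightarrow{CB}$, and I must rule out $q=0$ separately. I handle this by treating $T_P=C$ as its own case, as above. Apart from that, the argument is a direct combination of the two preceding lemmas, and no Mathematica computation is needed.
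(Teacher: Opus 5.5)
Your proof is correct and follows the paper's argument: you take the unsigned distance $|aq/(q+r)|$ from Lemma~\ref{lemma:unsignedAtrace} and fix its sign using Lemma~\ref{lemma:right}. Treating $T_P=C$ (where $q=0$) and $q+r=0$ as separate cases is a useful tightening, since the paper's proof asserts that $q$ and $q+r$ have the same sign on the ray $\overrightarrow{CB}$ without excluding $q=0$.
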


\begin{proof}
By Lemma~\ref{lemma:unsignedAtrace}, the unsigned distance is 
$$\left|\frac{aq}{q+r}\right|=a\left|\frac{q}{q+r}\right|.$$
By Lemma~\ref{lemma:right}, if $P$ lies on the extension of $BC$ beyond $C$,
then $q$ and $q+r$ must have opposite signs, so the signed distance is negative and
equal to $aq/(q+r)$. Similarly, if $P$ lies on $\overrightarrow{CB}$, then the signed distance
is positive, and by Lemma~\ref{lemma:right}, $q$ and $q+r$ have the same sign, making
the signed distance equal to $aq/(q+r)$.
\end{proof}

We say that a point $P$ on line $BC$ is \emph{to the right of $C$} if
$P$ lies on the extension of $BC$ beyond $C$.

\newpage

\textbf{Definition.}
We define the \emph{trace order} on triangle centers, $\prec$, by

\centerline{$P\prec Q$ if $\hbox{Atrace}(P)$ is further from $C$ than $\hbox{Atrace}(Q)$ }
\centerline{in all acute triangles $ABC$ with $a<b<c$.}

By ``$U$ is further from $C$ than $V$'' we mean that $d(U)>d(V)$
where $d(X)$ denotes the signed distance from $X$ to $C$.

Considering only the triangle centers from $X_1$ through $X_{30}$, we get the following result.

\begin{theorem}
\label{thm:acuteTrace}
Using the trace order $\prec$, we have
$$X_{20}\prec X_{22}\prec  X_3\prec X_{8}\prec  X_9 \prec X_{21}\prec X_{10}\prec  X_2\prec X_1\prec X_{17}\prec X_{12}$$
$$\prec  X_7\prec X_{13}\prec X_{29}\prec X_4\prec X_{27}\prec X_{19}\prec X_{28}\prec X_{25}\prec X_{24}\prec X_{23}.$$
\end{theorem}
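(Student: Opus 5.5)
We prove the chain one adjacent link at a time, following the template of Theorems~\ref{thm:vertex} and~\ref{thm:acute}. For each center $X_n=(p_n:q_n:r_n)$ in the chain, we take the barycentric coordinates from \cite{ETC}. By Lemma~\ref{lemma:signedAtrace}, the signed distance from $\hbox{Atrace}(X_n)$ to $C$ is
$$t_n=\frac{a\,q_n}{q_n+r_n}.$$
Each claim $X_m\prec X_n$ is then exactly the inequality $t_m>t_n$. This must hold for all triangles satisfying
\begin{verbatim}
traceConstraint = acuteConstraint && a<b && b<c;
\end{verbatim}
We issue \texttt{Simplify[t[m]>t[n], traceConstraint]} for each of the twenty adjacent pairs $(X_{20},X_{22}),(X_{22},X_3),\ldots,(X_{24},X_{23})$ and expect \texttt{True}. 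Transitivity of $>$ then gives the full chain.

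Before running the comparisons, we check that each $t_n$ is well defined, i.e.\ $q_n+r_n\neq 0$ throughout the region. Otherwise the cevian $AX_n$ would be parallel to $BC$, or $X_n=A$. For centers like $X_1,X_2,X_6$ this is immediate, since $q,r>0$. For $X_{20},X_{22},X_{23},X_{24},X_{25}$ and the other conic-related centers, we confirm it with \texttt{FindInstance[q+r==0 \&\& traceConstraint]} returning no instance. A useful by-product is a sign check: $X_{20}$ and $X_{22}$ should have traces beyond $B$ on the $B$ side (largest $t$), and $X_{23}$ the smallest.

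If \texttt{Simplify} stalls on a pair, we fall back to the standard reduction. We write
$$t_m-t_n=\frac{a\,(q_mr_n-q_nr_m)}{(q_m+r_m)(q_n+r_n)},$$
so the claim becomes a polynomial sign condition
$$(q_mr_n-q_nr_m)(q_m+r_m)(q_n+r_n)>0.$$
To make positivity manifest on the acute region with $a<b<c$, we substitute $b=a+x$, $c=a+x+y$ with $a,x,y>0$ and include the acute constraint $c^2<a^2+b^2$. We then factor: typically $q_mr_n-q_nr_m$ contains a factor $(b-c)$ or $(b^2-c^2)$, which is negative, times a symmetric-ish positive remainder. Alternatively, we hand the polynomial inequality to \texttt{Reduce} or \texttt{Resolve} (cylindrical decomposition) over the reals.

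The main obstacle is the high-degree centers $X_{22}$, $X_{23}$, $X_{24}$, $X_{25}$, $X_{26}$-type expressions. Here the numerator $q_mr_n-q_nr_m$ has large degree, and its positivity may depend essentially on acuteness. The links $X_{20}\prec X_{22}$ and $X_{24}\prec X_{23}$ are the likeliest trouble. For these we would first divide out the obvious factors $(b-c)$, $a^2$, and $S_A=b^2+c^2-a^2$, all of fixed sign on the region. We would then parametrize by $S_A,S_B,S_C>0$ (Conway notation), in which acuteness becomes simple positivity, and aim to show that the remaining factor has all nonnegative coefficients.
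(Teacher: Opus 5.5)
Your proposal matches the paper's proof: you compute the signed distance $aq/(q+r)$ from Lemma~\ref{lemma:signedAtrace}, verify each adjacent link with \texttt{Simplify} under \texttt{traceConstraint}, and conclude by transitivity; the well-definedness check and the polynomial fallback are reasonable extras that the paper does not spell out. One side remark is wrong, though nothing in your argument depends on it: the A-traces of $X_{20}$ and $X_{22}$ are not always beyond $B$, since in a nearly equilateral acute triangle with $a<b<c$ both traces lie near the midpoint of $BC$.
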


\begin{proof}
Let $d_n$ denote the signed distance from $\hbox{Atrace}(X_n)$ to $C$.
From Lemma~\ref{lemma:signedAtrace}, the Mathematica function for $d_n$ can be written as
\begin{verbatim}
     d[{p_, q_, r_}] := a*q/(q+r);
     d[n_] := d[x[n]];
\end{verbatim}
where \texttt{x[n]} represents the barycentric coordinates for $X_n$.
Let us start with the first claim, $X_{20}\prec  X_{22}$.
The barycentric coordinates \texttt{x[20]} and \texttt{x[22]} are obtained from \cite{ETC}.
We can then execute the Mathematica commands
\begin{verbatim}
    acuteConstraint = b^2<a^2+c^2 && c^2<a^2+b^2 && a^2<b^2+c^2
                                  && a>0 && b>0 && c>0;
    traceConstraint = acuteConstraint && a<b<c;
    Simplify[d[20] > d[22], traceConstraint]
\end{verbatim}
and Mathematica responds with \texttt{True}, proving that $\hbox{Atrace}(X_{20})$
is always further from $C$ than $\hbox{Atrace}(X_{22})$.
The other claims are proven in the same manner.
\end{proof}

Figure~\ref{fig:traceOrder} shows some of the centers from $X_1$ through $X_{19}$. As we vary the shape of the acute triangle $ABC$ (keeping $a<b<c$),
the spacing of the traces will vary, but their order remains fixed.

\begin{figure}[h!t]
\centering
\includegraphics[width=0.5\linewidth]{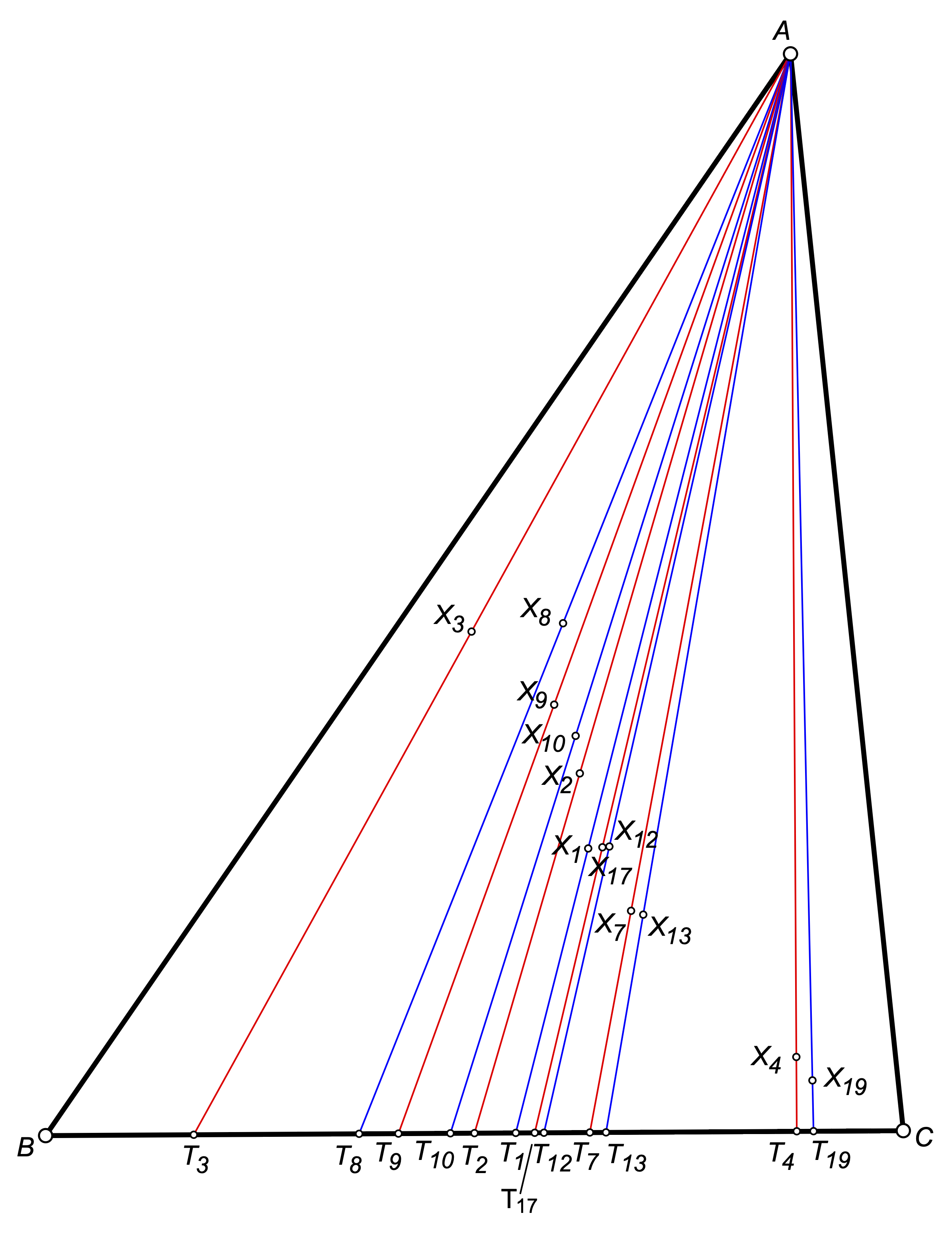}
\caption{Centers in an acute triangle and their traces}
\label{fig:traceOrder}
\end{figure}

Figure~\ref{fig:traceGraph} shows the full order.

\begin{figure}[h!t]
\centering
\includegraphics[width=0.4\linewidth]{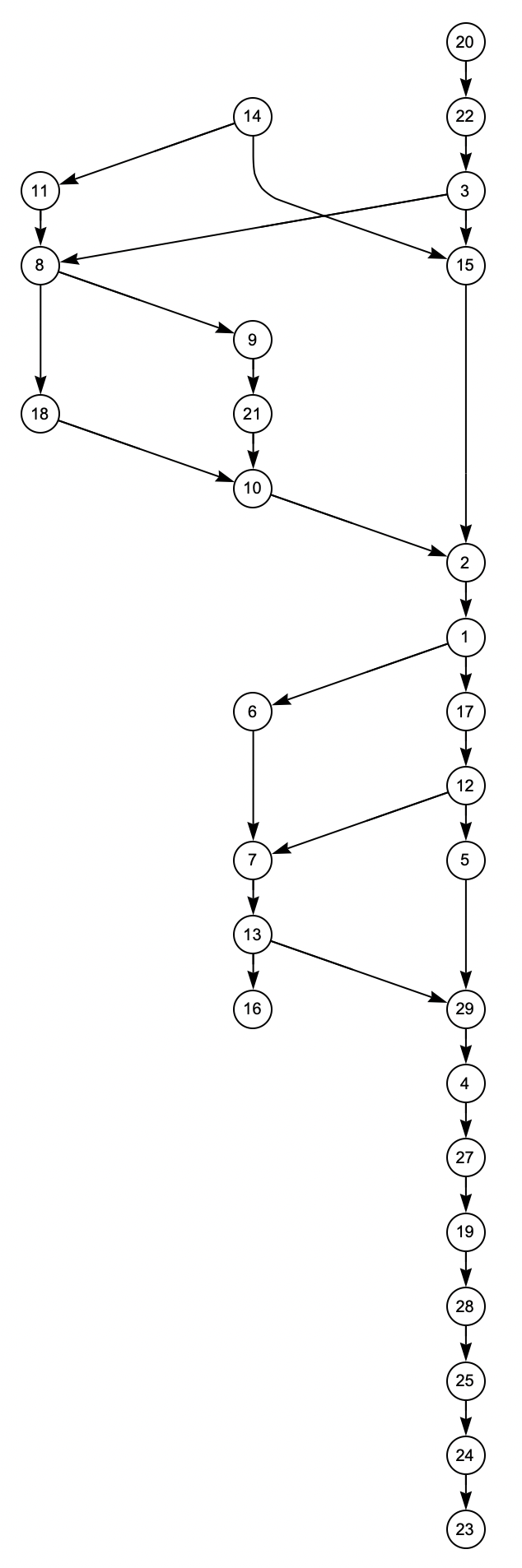}
\caption{An arrow from m to n means $X_m\prec X_n$ under the trace order.}
\label{fig:traceGraph}
\end{figure}

The center $X_{26}$ does not appear in this graph because its A-trace can be arbitrarily far from $C$ in either direction.
The center $X_{30}$ does not appear in this graph because it lies on the line at infinity.

\begin{open}
Is there a simple reason that center $X_1$ is a cutpoint
for the graph shown in Figure~\ref{fig:traceGraph}?
\end{open}

\begin{open}
As more centers are added to the graph in Figure~\ref{fig:traceGraph},
does $X_1$ remain a cutpoint?
\end{open}

Figure~\ref{fig:traceOrder} suggests that the traces of all centers lie on line segment $BC$.
This is not the case.

\goodbreak

\begin{theorem}
Using the trace order $\prec$, we have $C\prec X_{24}$.
\end{theorem}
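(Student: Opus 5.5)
The plan is to reduce the claim to a sign condition on the barycentric coordinates of $X_{24}$. By definition of the trace order, $C\prec X_{24}$ means that $\hbox{Atrace}(C)$ is further from $C$ than $\hbox{Atrace}(X_{24})$. Since $\hbox{Atrace}(C)=C$, its signed distance to $C$ is $0$. So it suffices to show that the signed distance from $\hbox{Atrace}(X_{24})$ to $C$ is negative in every acute triangle with $a<b<c$. In other words, the trace of $X_{24}$ always lies to the right of $C$.

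Write $X_{24}=(p:q:r)$ with the barycentric coordinates taken from \cite{ETC}. By Lemma~\ref{lemma:signedAtrace}, the signed distance is $aq/(q+r)$. By Lemma~\ref{lemma:right}, the condition is equivalent to
$$q(q+r)<0.$$
I would form this polynomial inequality in $a,b,c$ and, as in the proof of Theorem~\ref{thm:acuteTrace}, issue
\texttt{Simplify[d[x[24]] < 0, traceConstraint]}. I expect Mathematica to return \texttt{True}.

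To give a human-readable reason, and to guard against \texttt{Simplify} failing on a degree-high expression, I would also factor $q$ and $q+r$ and determine the sign of each factor under the constraints. The useful facts are:
\begin{itemize}
\item acuteness gives $S_A,S_B,S_C>0$;
\item the ordering $a<b<c$ gives $A<B<C$;
\item since $C$ is the largest angle, $C>60\degrees$, so $2C>120\degrees$ and $\cos 2C<0$, or equivalently $c^4$ dominates the relevant combination of $a^4$ and $b^4$.
\end{itemize}
I expect one of the factors in the $C$-coordinate $r$ (a $\cos 2C$-type factor) to be negative while the corresponding factor in $q$ stays positive. If so, $r$ is negative and large enough that $q+r<0$ while $q>0$.

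The main obstacle is this sign analysis of $q+r$. It is a sum of two terms of opposite sign, so a factor-by-factor argument alone will not settle it. I would first try to factor $q+r$ symbolically. Failing that, I would divide through by the positive quantity $S_AS_BS_C$ and substitute $b=a+s$, $c=a+s+t$ with $s,t>0$, then try to exhibit a representation in which each term is negative, using the acuteness inequality $c^2<a^2+b^2$ to bound the remaining terms. If this manual route stalls, I would fall back on the \texttt{Simplify} (or \texttt{Reduce}) certificate, which is the method the paper uses for all its ordering claims.
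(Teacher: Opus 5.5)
You read the definition correctly. $\hbox{Atrace}(C)=C$ has signed distance $0$, so $C\prec X_{24}$ means $aq/(q+r)<0$, i.e.\ $q(q+r)<0$, i.e.\ the A-trace of $X_{24}$ lies beyond $C$. The trouble is that this inequality is false in every acute triangle with $a<b<c$. So \texttt{Simplify[d[x[24]] < 0, traceConstraint]} will not return \texttt{True}, and neither your factor analysis nor the substitution $b=a+s$, $c=a+s+t$ can succeed.

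The step that breaks is your expectation that the $\cos 2B$-type factor in $q$ stays positive. $X_{24}$ has barycentrics $(\tan A\cos 2A:\tan B\cos 2B:\tan C\cos 2C)$. In polynomial form the first coordinate is $a^2(a^2+b^2-c^2)(a^2-b^2+c^2)(a^4+b^4+c^4-2a^2b^2-2a^2c^2)$, and the others are cyclic. Acuteness together with $A<B$ gives $2B>A+B=180^\circ-C>90^\circ$. Hence $45^\circ<B<C<90^\circ$, so both $\cos 2B<0$ and $\cos 2C<0$. Therefore $q<0$ and $r<0$, which gives $q+r<0$, $q(q+r)>0$, and $0<aq/(q+r)<a$. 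The A-trace of $X_{24}$ always lies strictly inside segment $BC$. For instance, in the paper's triangle with $a=1$, $b=2$, $c=\sqrt{(1+\sqrt{61})/2}$, one gets $aq/(q+r)\approx 0.256$.

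This agrees with the rest of the paper in two ways. First, its list of centers among the first $29$ whose A-trace can lie beyond $C$ is $\{16,23,26\}$, which excludes $24$. Second, its chain $X_{24}\prec X_{23}$, applied in the triangle where $\hbox{Atrace}(X_{23})=C$, already forces $d(\hbox{Atrace}(X_{24}))>0$ there.

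The paper's proof actually executes \texttt{Simplify[d[24] > d[ptC], traceConstraint]}, i.e.\ $d(\hbox{Atrace}(X_{24}))>d(C)$. Under the paper's own definition ($P\prec Q$ iff $d(P)>d(Q)$), that is $X_{24}\prec C$, not $C\prec X_{24}$. So the theorem as printed has the relation reversed. Your literal reading exposes this, but it commits you to proving a false inequality. The provable and verified statement is $X_{24}\prec C$, and the sign argument above proves it by hand without computer algebra.
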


\begin{proof}
We execute the Mathematica command
\begin{verbatim}
    Simplify[d[24] > d[ptC], traceConstraint]
\end{verbatim}
where \texttt{ptC} $=\{0,0,1\}$ and
Mathematica responds with \texttt{True}, confirming that $C\prec X_{24}$.
\end{proof}

The following theorem is proven in the same manner.

\begin{theorem}
Using the trace order $\prec$, we have $X_{650}\prec B$.
\end{theorem}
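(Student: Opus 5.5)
The plan is a direct sign check using Lemma~\ref{lemma:signedAtrace}, which should also be exactly what the \texttt{Simplify} command in the Mathematica proof is verifying. Since $\hbox{Atrace}(B)=B$ and $B$ lies on $\overrightarrow{CB}$ at distance $a$ from $C$, the claim $X_{650}\prec B$ means
$$\frac{aq}{q+r}>a,$$
where $(p:q:r)$ are barycentric coordinates of $X_{650}$. Dividing by $a>0$, this becomes $q/(q+r)>1$. Equivalently, $-r/(q+r)>0$, which holds if $r<0$ and $q+r>0$.

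I will take from \cite{ETC} the coordinates
$$X_{650}=\bigl(a(b-c)(b+c-a):b(c-a)(c+a-b):c(a-b)(a+b-c)\bigr).$$
Under the trace constraint $a<b<c$, the factor $a+b-c$ is positive by the triangle inequality and $a-b<0$. Hence $r=c(a-b)(a+b-c)<0$.

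The main step is to show $q+r>0$. Rather than expanding $q+r$ directly, I will use the cyclic identity $p+q+r=0$ to write $q+r=-p$.

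To verify the identity, expand each coordinate as $a(b^2-c^2)-a^2(b-c)$ and its cyclic shifts. Both cyclic sums $\sum a(b^2-c^2)$ and $\sum a^2(b-c)$ equal the same alternating product $\pm(a-b)(b-c)(c-a)$, so they cancel. This cancellation is the one point where I must be careful with signs. If it fails, I will instead bound $q+r$ directly, or simply fall back on \texttt{Simplify[d[650] > d[ptB], traceConstraint]} with \texttt{ptB} $=\{0,1,0\}$.

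Granting the identity, $q+r=a(c-b)(b+c-a)$. This is positive because $c>b$ and $b+c>a$. Note that $q+r\ne 0$, so the A-trace of $X_{650}$ is a genuine finite point. Combining, $q/(q+r)=1-r/(q+r)>1$, which is the required inequality. This argument does not use acuteness, so the result holds for any triangle with $a<b<c$.
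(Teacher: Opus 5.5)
The overall plan is sound, and it is a hand version of the paper's one-line \texttt{Simplify} check. Since $\hbox{Atrace}(B)=B$ has signed distance $a$, you need $q/(q+r)>1$, and your argument that $r=c(a-b)(a+b-c)<0$ is correct. However, the main step rests on a false identity. The two cyclic sums are opposite, not equal: $\sum a(b^2-c^2)=(a-b)(b-c)(c-a)$ while $\sum a^2(b-c)=-(a-b)(b-c)(c-a)$. Since each coordinate is the first minus the second, they add rather than cancel, giving
\[
p+q+r=2(a-b)(b-c)(c-a)\neq 0 .
\]
For example, $a=2$, $b=3$, $c=4$ gives $(p,q,r)=(-10,18,-4)$, with sum $4$. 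So $X_{650}$ is a finite point, not a point at infinity. Your formula $q+r=a(c-b)(b+c-a)$ predicts $10$ here, whereas $q+r=14$. As written, the positivity of $q+r$ is therefore unproved.

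The repair is short. Expanding directly,
\[
q+r=(c-b)\bigl[(a-b)(a-c)+bc\bigr],
\]
or equivalently $q+r=-p+2(a-b)(b-c)(c-a)$. For $a<b<c$ both factors (respectively both summands) are positive. With $r<0$ and $q+r>0$, your final step $q/(q+r)=1-r/(q+r)>1$ goes through.

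Your remark that acuteness is never used remains correct. The repaired argument is therefore a transparent replacement for the paper's machine verification, and it proves $X_{650}\prec B$ for every triangle with $a<b<c$, not just acute ones.
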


\begin{theorem}
The A-trace of the center $X_{23}$ can occur to the right of $C$.
\end{theorem}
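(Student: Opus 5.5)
The statement is existential, so the plan is to exhibit one explicit acute triangle $ABC$ with $a<b<c$ in which $\hbox{Atrace}(X_{23})$ lies on the extension of $BC$ beyond $C$. By Lemma~\ref{lemma:right} applied to the trace $(0:q:r)$ of $X_{23}=(p:q:r)$ (its coordinates were found in the proof of Lemma~\ref{lemma:unsignedAtrace}), this happens exactly when
$$q(q+r)<0.$$
Equivalently, by Lemma~\ref{lemma:signedAtrace}, the signed distance $aq/(q+r)$ from the trace to $C$ is negative.

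First I will take the barycentric coordinates of $X_{23}$ from \cite{ETC}:
$$X_{23}=\bigl(a^2(a^4-b^4+b^2c^2-c^4):b^2(b^4-c^4+c^2a^2-a^4):c^2(c^4-a^4+a^2b^2-b^4)\bigr).$$
Heuristically, $q$ contains the factor $b^4-c^4+c^2a^2-a^4$. This factor becomes negative when $c$ is the largest side and $a$, $b$ are not too far below it. Meanwhile $r$ contains $c^4-a^4+a^2b^2-b^4$, which is then positive and dominant. So $q<0<q+r$ should hold in a mildly scalene acute triangle with $c$ largest.

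The concrete candidate is the $4$--$5$--$6$ triangle. It satisfies $a<b<c$, and it is acute since $36<16+25$. With $a^2=16$, $b^2=25$, $c^2=36$:
$$q=25(625-1296+576-256)=25\cdot(-351)=-8775,$$
$$r=36(1296-256+400-625)=36\cdot 815=29340.$$
Hence $q+r=20565>0$ and $q(q+r)<0$. By Lemma~\ref{lemma:right}, the trace $(0:q:r)$ lies beyond $C$. Equivalently, the signed distance $aq/(q+r)=4(-8775)/20565$ is negative.

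The only real obstacle is using the correct sign convention for the ETC coordinates of $X_{23}$. An overall sign change would not matter, since the condition $q(q+r)<0$ is homogeneous of even degree. A transcription slip in an individual term would matter, however. As a safeguard I would confirm the witness in Mathematica, either with \texttt{FindInstance[q(q+r)<0 \&\& traceConstraint, \{a,b,c\}]} or by direct substitution of $(4,5,6)$, in the same style as the earlier proofs.
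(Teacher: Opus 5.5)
Your proof is correct: the $4$--$5$--$6$ triangle is acute with $a<b<c$, your values $q=-8775$ and $r=29340$ check out, and $q(q+r)<0$ places the trace beyond $C$ by Lemma~\ref{lemma:right} (about $1.71$ units past $C$). This is the same witness-based approach as the paper, which exhibits an $11$--$12$--$16$ triangle in a figure; you use a different triangle and, unlike the paper, verify the sign condition algebraically rather than pictorially.
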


\begin{figure}[h!t]
\centering
\includegraphics[width=0.3\linewidth]{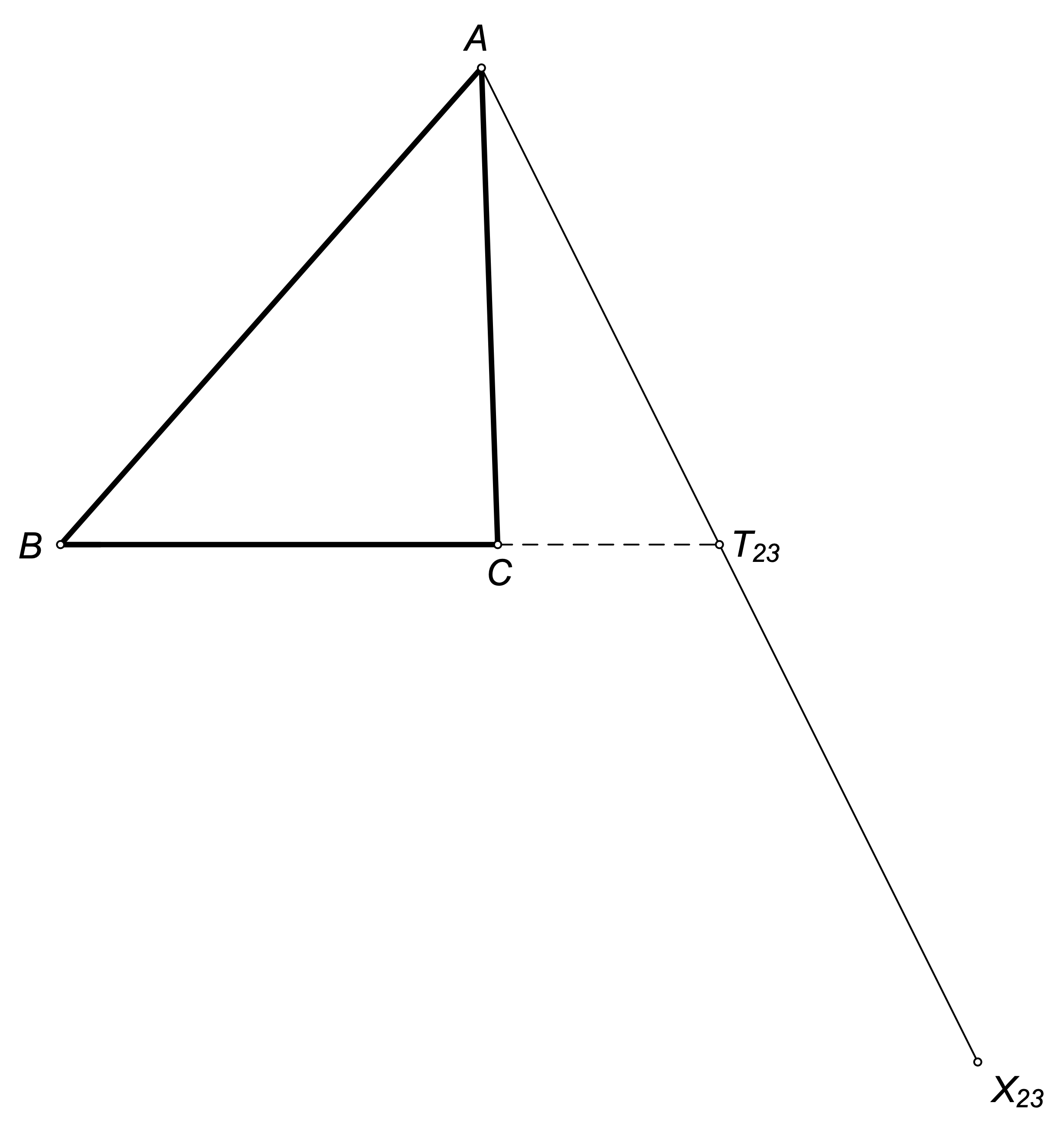}
\caption{An 11--12--16 triangle}
\label{fig:X23}
\end{figure}

\begin{proof}
See figure~\ref{fig:X23} showing an 11--12--16 triangle in which $T_{23}$ is to the right of point~$C$.
Note that angle $C$ is about $88\degrees$.
\end{proof}

Although not shown in Figure~\ref{fig:X23}, the same triangle shows that $\hbox{Atrace}(X_i)$
can lie to the right of $C$ for $i\in\{23,36,44,50,64,66,84,99,100\}$.

\begin{theorem}
Of the first 29 triangle centers, only $X_{16}$, $X_{23}$, and $X_{26}$ can have an A-trace
to the right of $C$.
\end{theorem}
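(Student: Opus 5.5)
Since $\hbox{Atrace}(X_n)$ has coordinates $(0:q:r)$ when $X_n=(p:q:r)$, Lemma~\ref{lemma:right} tells us that the A-trace lies to the right of $C$ exactly when $q(q+r)<0$. Equivalently, $q$ and $r$ have opposite signs with $|r|>|q|$.

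The plan has two halves, both carried out under \texttt{traceConstraint} (acute, $a<b<c$).

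First, for each $n\in\{1,\dots,29\}\setminus\{16,23,26\}$, take \texttt{x[n]}$=(p:q:r)$ from \cite{ETC}. We then show that $q(q+r)\ge 0$ always holds. In Mathematica this is \texttt{Simplify[q(q+r)>=0, traceConstraint]}, or more robustly a \texttt{FindInstance[q(q+r)<0 \&\& traceConstraint, \{a,b,c\}]} that returns \texttt{\{\}}.

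For most of these centers a human proof is immediate. If $q,r>0$ throughout acute triangles (as for $X_1,\dots,X_{10}$ and $X_{19},X_{20}$, etc.), then $X_n$ lies inside angle $A$ and the trace lies on segment $BC$. The acute hypothesis makes terms like $b^2+c^2-a^2$ and $a^2+c^2-b^2$ positive. For the others, such as $X_{11},X_{14},X_{18},X_{24},X_{25}$, we rely on the symbolic check.

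Any $X_n$ whose coordinates vanish identically or degenerate needs care. $X_{30}$ is a point at infinity, so it still has a well-defined A-trace $(0:q:r)$ and is tested the same way. $X_{11}$ can have $q=0$, which is allowed since the statement concerns a trace strictly to the right.

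Second, for $n\in\{16,23,26\}$, we exhibit explicit acute triangles with $a<b<c$ in which $q(q+r)<0$. For $X_{23}$ the 11--12--16 triangle of Figure~\ref{fig:X23} already works. For $X_{16}$ and $X_{26}$ we search with \texttt{FindInstance[q(q+r)<0 \&\& traceConstraint, \{a,b,c\}, Integers]} and verify the chosen witness by direct substitution. This is the easy direction.

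The main obstacle is the universal half for centers with high-degree coordinates, such as $X_{18}$, $X_{22}$, $X_{24}$, $X_{25}$ and $X_{28}$. There, \texttt{Simplify} may fail to decide $q(q+r)\ge0$ on the semialgebraic region. If it stalls, I would first substitute $a=1$ by homogeneity and factor $q$ and $q+r$ separately, since they often factor into products of acute-positive pieces like $a^2+b^2-c^2$. Otherwise I would fall back on \texttt{Reduce}/\texttt{Resolve} with cylindrical decomposition applied to $\exists a,b,c$.
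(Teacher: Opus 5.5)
Your proposal is essentially the paper's proof. Lemma~\ref{lemma:right} reduces the question to the sign of $q(q+r)$, and a \texttt{FindInstance} search under \texttt{traceConstraint} for each of $X_1,\dots,X_{29}$ settles both directions; your 11--12--16 witness for $X_{23}$ does check out.

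Some of your informal side remarks are wrong, though none is load-bearing:
\begin{itemize}
\item $X_{20}$ does not have $q,r>0$. Write it as $X_{20}=(\tan B+\tan C-\tan A:\tan C+\tan A-\tan B:\tan A+\tan B-\tan C)$; the last coordinate is negative when $C$ is near $90\degrees$. Its conclusion still holds, but only because $q>0$ and $q+r=2\tan A>0$.
\item $X_{11}$ cannot have $q=0$ here, since $q=(c-a)^2(c+a-b)>0$ when $a<b<c$.
\item $X_{30}$ is not among the first 29 centers.
\end{itemize}
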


\begin{proof}
From Lemma~\ref{lemma:right}, the Mathematica condition for the A-trace to be to the right of $C$ is
\begin{verbatim}
     toTheRightOfC[{p_, q_, r_}] := q(q+r) < 0
\end{verbatim}
and we can then use the Mathematica command
\begin{verbatim}
    FindInstance[toTheRightOfC[x[i]] && traceConstraint, {a,b,c}]
\end{verbatim}
(where \texttt{x[i]} denotes the coordinates for center $X_i$) to determine if $\hbox{Atrace}(X_i)$
can be to the right of $C$. As we vary $i$ from 1 to 29, we find that the only values for $i$
for which $\hbox{Atrace}(X_i)$ can be to the right of $C$ are 16, 23, and 26.
\end{proof}

\begin{theorem}
In a triangle with sides $a=1$, $b=2$, and $c=\sqrt{(1+\sqrt{61})/2}$, the A-trace of center $X_{23}$
coincides with $C$. In other words, $X_{23}$ lies on side $AC$ for that triangle.
\end{theorem}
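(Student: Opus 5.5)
The plan is a direct computation in barycentric coordinates, using Lemma~\ref{lemma:signedAtrace}. Take the coordinates of $X_{23}$ from \cite{ETC}:
$$X_{23}=\bigl(a^2(b^4+c^4-a^4-b^2c^2):\ b^2(c^4+a^4-b^4-c^2a^2):\ c^2(a^4+b^4-c^4-a^2b^2)\bigr).$$
Write these as $(p:q:r)$. By the proof of Lemma~\ref{lemma:unsignedAtrace}, $\hbox{Atrace}(X_{23})$ has coordinates $(0:q:r)$. This point equals $C=(0:0:1)$ exactly when $q=0$ and $r\neq 0$. Equivalently, the signed distance $aq/(q+r)$ from Lemma~\ref{lemma:signedAtrace} vanishes.

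Since $b\neq 0$, the condition $q=0$ reduces to $c^4+a^4-b^4-c^2a^2=0$. Substituting $a=1$ and $b=2$ turns this into the quadratic $c^4-c^2-15=0$ in $c^2$. Its only positive root is
$$c^2=\frac{1+\sqrt{61}}{2},$$
which is exactly the given value of $c$. This confirms that $q=0$ for the stated triangle.

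The nondegeneracy checks are the step that actually needs care.
\begin{itemize}
\item The side lengths must form a triangle. Here $c\approx 2.099$, and $1<2<c<3$ gives all three triangle inequalities.
\item We need $r\neq 0$. With $c^4=c^2+15$, we get $r=c^2(1+16-c^4-4)=c^2(-2-c^2)\neq 0$.
\item $X_{23}$ must not be at infinity. Since $q=0$, we have $p+q+r=p+r$. Substituting $a=1$, $b=2$ and $c^4=c^2+15$, this becomes $(c^2-4)-c^2(2+c^2)=-(c^4+c^2+4)\neq0$. This also gives $p\neq 0$ unless $p=r$, and we check directly that $p=c^2-4\approx0.405\neq 0$. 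So $X_{23}\neq A$, and the cevian $AX_{23}$ is well defined.
\end{itemize}

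With these checks in place, the $A$-trace is $C$. Because $q=0$, the point $X_{23}$ lies on the line $y=0$, which is line $AC$, as claimed.
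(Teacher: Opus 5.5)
Your approach is the paper's: substitute the side lengths into the ETC barycentrics of $X_{23}$ and observe that the $B$-coordinate vanishes. Solving $q=0$ for $c^2$ instead of plugging in is only a cosmetic difference, though it does show this $c$ is forced once $a=1$, $b=2$. The key computations $c^4-c^2-15=0$ and $r=-c^2(c^2+2)\neq 0$ are correct. Moreover, $r\neq 0$ by itself already gives $X_{23}\neq A$, so the cevian is defined and its trace is $C$.

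There is one genuine error, in your value of $p$. With $a=1$, $b=2$ and $c^4=c^2+15$,
$$p=a^2(b^4+c^4-a^4-b^2c^2)=16+(c^2+15)-1-4c^2=30-3c^2=\tfrac32\bigl(19-\sqrt{61}\bigr)\approx 16.8,$$
not $c^2-4$. Consequently $p+q+r=p+r=15-6c^2=3\bigl(4-\sqrt{61}\bigr)$, not $-(c^4+c^2+4)$.

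Both quantities are still nonzero, so your conclusion survives. The corrected coordinates $\bigl(\tfrac32(19-\sqrt{61}):0:-\tfrac32(11+\sqrt{61})\bigr)$ agree, up to overall sign, with the ones the paper displays.

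Also drop the sentence ``this also gives $p\neq0$ unless $p=r$''. Knowing $p+r\neq 0$ says nothing about whether $p=0$. In any case $p\neq0$ is not needed: it only rules out $X_{23}=C$, which would still satisfy the statement.

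Finally, since $p/(p+r)<0$, the point $X_{23}$ lies on the sideline $AC$ beyond $C$, not on the segment. Your reading of ``side'' as ``line $AC$'' is therefore the right one.
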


\begin{proof}
From \cite{ETC}, we find that the barycentric coordinates for $X_{23}$ are
$$\Bigl(a^2 \left(a^4-b^4+b^2 c^2-c^4\right):b^2 \left(-a^4+a^2 c^2+b^4-c^4\right):c^2
   \left(-a^4+a^2 b^2-b^4+c^4\right)\Bigr).$$
Letting $a=1$, $b=2$, and $c=\sqrt{(1+\sqrt{61})/2}$ transforms this to
$$\left(\frac{3}{2} \left(\sqrt{61}-19\right):0:\frac{3}{2} \left(11+\sqrt{61}\right)\right)$$
which shows that $X_{23}$ lies on $AC$.
\end{proof}

\section{Areas for Future Research}

Take some line associated with a triangle, such as the Euler line, the Brocard Axis, or a symmedian.
Project centers onto this line using a point projection, a parallel projection, or an orthogonal projection.
Investigate the order of the traces on this line for special types of triangles, such as acute triangles
or triangles with a $60\degrees$ angle.

\section{Concluding Remarks}

These ordering relations suggest that triangle centers possess an unexpected global structure, analogous to the partial orders that arise in other areas of discrete geometry.

We hope this paper has made the reader see that there is more order amongst triangle centers
than they may have thought.


\end{document}